
\documentclass[12pt]{amsart}

\usepackage{amsmath,amssymb}
\usepackage{amsthm}
\usepackage{hyperref}
\usepackage[margin=1.0in]{geometry}

\numberwithin{equation}{section}

\newtheorem{prop}{Proposition}
\newtheorem{lemma}[prop]{Lemma}

\newtheorem{thm}[prop]{Theorem}
\newtheorem{cor}[prop]{Corollary}

\numberwithin{prop}{section}

\theoremstyle{definition}
\newtheorem{defn}[prop]{Definition}

\newtheorem{rmk}[prop]{Remark}

\newcommand{\del}{\partial}
\newcommand{\dt}{\frac{\partial}{\partial t}}
\newcommand{\brs}[1]{\left| #1 \right|}

\newcommand{\gG}{\Gamma}

\newcommand{\gD}{\Delta}
\newcommand{\gd}{\delta}

\newcommand{\gw}{\omega}
\newcommand{\ga}{\alpha}
\newcommand{\gb}{\beta}

\newcommand{\N}{\nabla}
\newcommand{\FF}{\mathcal F}
\newcommand{\WW}{\mathcal W}

\newcommand{\GG}{\mathcal G}

\renewcommand{\bar}[1]{\overline{#1}}

\newcommand{\IP}[1]{\left<#1\right>}

\newcommand{\HH}{\mathcal{H}}

\DeclareMathOperator{\Rc}{Rc}

\DeclareMathOperator{\tr}{tr}

\DeclareMathOperator{\Vol}{Vol}
\DeclareMathOperator{\diam}{diam}

\DeclareMathOperator{\End}{End}

\begin{document}

\title[Structure of collapsing solutions of generalized Ricci flow]{Structure of collapsing solutions of generalized Ricci flow}

\begin{abstract} We derive modified Perelman-type monotonicity formulas for solutions to the generalized Ricci flow equation with symmetry on principal bundles, which lead to rigidity and classification results for nonsingular solutions. 
\end{abstract}

\date{\today}

\author{Steven Gindi}
\address{Surge Hall\\
         University of California\\
         Riverside, CA 92521}
\email{\href{mailto:gindis@ucr.edu}{gindis@ucr.edu}}

\author{Jeffrey Streets}
\address{Rowland Hall\\
         University of California\\
         Irvine, CA 92617}
\email{\href{mailto:jstreets@uci.edu}{jstreets@uci.edu}}

\maketitle

\section{Introduction}

The generalized Ricci flow is a natural coupling of the Ricci flow and the heat equation for a closed three-form, namely
\begin{gather} \label{f:GRF}
\begin{split}
\dt g =&\ -2 \Rc + \tfrac{1}{2} H^2,\\
\dt H =&\ \gD_d H,
\end{split}
\end{gather}
where $H^2_{ij} = H_{ipq} H_j^{pq}$, and $\gD_d$ denotes the Hodge Laplacian with respect to the time varying metric.  This system first arose in the physical theory of renormalization group flows of sigma models (cf. \cite{Polchinski}).  Many interesting properties have been established in recent years, including a gradient formulation \cite{OSW}, basic existence and regularity properties \cite{Streetsexpent}, and a relationship to generalized geometry and $T$-duality \cite{StreetsTdual}.  In \cite{PCFReg, GKRF} it was shown that this flow preserves natural integrability conditions in complex and generalized K\"ahler geometry, coming from the relationship to the pluriclosed flow, described in \S \ref{s:dimredGRF} below.

Our main interest here is to study certain kinds of nonsingular solutions to the generalized Ricci flow and to analyze them by means of energy and entropy functionals.  Dimensional collapse is a common feature in analyzing the long time limits of Ricci flow.  By the compactness theory of Lott \cite{Lott1}, from such a collapsing solution one can extract a limiting solution which exists on a groupoid.  Lott furthermore showed that in dimension $3$, these groupoid limits are essentially solutions of Ricci flow on twisted principal bundles.  By classifying these solutions by means of a modified entropy functional we describe below, Lott in turn classified all type III solutions to Ricci flow on three-manifolds with $\diam (g_t) = O(t^{1/2})$.  Dimensional collapse of course also happens in the generalized Ricci flow, and moreover examples of this behavior unique to the setting of generalized Ricci flow were exhibited by Boling \cite{Boling}.  Following the compactness theory of Lott, one expects the simplest kinds of collapsing limits to be modeled by solutions on twisted principal bundles.  Our main theorem classifies these solutions under certain natural hypotheses.  One particular application of interest is the classification of type III solutions to pluriclosed flow on complex surfaces as a partial test of the second author's geometrization conjecture for complex surfaces \cite{Streetsgeom}, and this will be the subject of a future work.  As in the work of Lott \cite{Lott2}, the classification of these solutions will eventually reduce to this classification result for solutions on twisted principal bundles.  We give an informal statement here, referring to \S \ref{SECTDC} for the notation and \S \ref{s:rigidity} for the precise claims.

\begin{thm} \label{t:mainthm} 
\begin{enumerate}
\item[1)] Let $(P, \bar{g}(\cdot), \bar{H}(\cdot))$ denote a global solution to invariant generalized Ricci flow, and suppose $(P_{\infty}, \bar{g}_{\infty}(\cdot), \bar{H}_{\infty}(\cdot))$ is a subsequential limit flow at infinity.  If $[\bar{H}_{\infty}(\cdot)] = 0$, then $(\bar{g}_{\infty}(\cdot), \bar{H}_{\infty}(\cdot))$ is constant in time, $\bar{H}_{\infty} \equiv 0$ and $\bar{g}_{\infty}$ is a local product metric with $g_{\infty}$ Ricci flat.
\item[2)] Let $(P, \bar{g}(\cdot), \bar{H}(\cdot))$ denote a global solution to invariant generalized Ricci flow with $\dim \GG \leq 2$, and suppose $(P_{\infty}, \bar{g}_{\infty}(\cdot), \bar{H}_{\infty}(\cdot))$ is a subsequential blowdown limit flow at infinity.  Then $H_{\infty} \equiv 0$, $F_{\infty} \equiv 0$, $\det G_{ij}$ is constant, and $\bar{g}$ satisfies the twisted Einstein equations (\ref{f:Ricciexpandersoliton}).
\end{enumerate}
\end{thm}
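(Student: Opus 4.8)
The plan is a Perelman-type blowdown argument: use the monotonicity of the modified entropy established in the earlier sections to force the limit flow to be self-similar, and then read off the stated rigidity from the resulting soliton identities, using $[\bar H_\infty(\cdot)]=0$ in part 1 and $\dim\GG\le 2$ in part 2 to kill the three-form and the curvature of the connection.

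For part 2, I would first recall the reduced $\WW$-entropy $\WW(\bar g,\bar H,f,\tau)$ for invariant generalized Ricci flow --- equivalently, the entropy of the dimensionally reduced system $(g,G,F,H,f)$ on the base --- together with its monotonicity along the flow and the rigidity statement that the monotonicity term vanishes exactly at gradient expanding solitons, whose defining identities are a twisted Einstein equation coupling $\Rc$, $\N^2 f$, $H^2$, $|F|^2$ and $\N^2\log\det G$, along with $\divg_f\bar H=0$ and drift-elliptic equations for $F$ and for $\log\det G$. Then I set up the blowdown: given the global solution, take parabolic rescalings $\bar g_i(t)=\lambda_i^{-1}\bar g(\lambda_i t)$, $\bar H_i(t)=\lambda_i^{-1}\bar H(\lambda_i t)$ with $\lambda_i\to\infty$, and pass to a subsequential limit flow $(P_\infty,\bar g_\infty(\cdot),\bar H_\infty(\cdot))$ via the compactness theory. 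The analytic crux is that the associated $\mu$-invariant (the infimum of $\WW$ over $f$) is monotone in $t$ and bounded along the original flow, hence converges as $t\to\infty$; by scale-invariance of $\mu$ this forces the monotonicity term to vanish identically on the limit flow, so $(P_\infty,\bar g_\infty,\bar H_\infty)$ is a gradient expanding soliton for invariant generalized Ricci flow.

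It then remains to analyze the soliton. One of the soliton identities, paired with the weight $e^{-f}$ and integrated, yields a Bochner-type relation in which $|F_\infty|^2$ appears with a definite sign, forcing $F_\infty\equiv 0$; here $\dim\GG\le 2$ is used to control the cross-terms between the $\mathfrak{g}$-valued curvature and the base geometry. With $F_\infty\equiv 0$ and $\dim\GG\le 2$ the invariant closed three-form $\bar H_\infty$ reduces to a basic form plus $\theta$-linear pieces; $\divg_f\bar H_\infty=0$ and $d\bar H_\infty=0$ make it $f$-harmonic, the $\theta$-linear pieces are forced to vanish, and the residual basic class is killed by the drift/soliton structure on the base, giving $\bar H_\infty\equiv 0$. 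The evolution equation for the fiber metric becomes $\Delta_f\log\det G_{ij}=0$ on the appropriate compact model, so $\det G_{ij}$ is constant; and with $\bar H_\infty\equiv 0$, $F_\infty\equiv 0$ and $\det G_{ij}$ constant, the remaining soliton identity is exactly the twisted Einstein equation (\ref{f:Ricciexpandersoliton}). Part 1 is the parallel ``steady'' case: one uses instead the modified $\lambda$-energy, whose monotonicity and rigidity give a gradient steady soliton on the limit; the hypothesis $[\bar H_\infty(\cdot)]=0$ makes the $f$-harmonic form $\bar H_\infty$ exact, hence zero, and the compact steady structure then forces $f$ constant, $\bar g_\infty$ a local product, and $g_\infty$ Ricci flat.

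The main obstacle I expect is the boundedness of the modified entropy along the original flow --- the step that actually produces a soliton in the limit. This is where the collapsing nature of the solution is used: on the total space $P$ the entropy is ``unwound'' along the fibers, so one expects it to stay bounded even though the metrics collapse, but making this precise requires the global (type III) hypotheses together with a noncollapsing-type estimate transported to the bundle picture. A secondary difficulty is that the limit base $M_\infty$ may be noncompact (a groupoid quotient), so the integration-by-parts and maximum-principle arguments forcing $F_\infty\equiv 0$, $\bar H_\infty\equiv 0$ and $\det G_{ij}$ constant must be carried out with the weight $e^{-f}$ and the appropriate compactness for the limiting \'etale groupoid.
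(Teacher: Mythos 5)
Your outline misses the actual analytic structure that makes this theorem nontrivial in the collapsed setting, in two places. First, the rigidity step: you treat the monotonicity of $\WW_+$ as an off-the-shelf fact whose defect ``vanishes exactly at gradient expanding solitons,'' and then propose to extract $F_\infty\equiv 0$ from a separate Bochner-type identity using $\dim\GG\le 2$ to control cross-terms. But in the dimensionally reduced setting the derivative of $\WW_+$ (Proposition \ref{p:Wmonotonicity}) is \emph{not} a sum of soliton-defect squares: it contains the extra term $\int\bigl(\tfrac14|F|^2-\tfrac14|[,]|^2+\tfrac16|H|^2-\tfrac14\tr_G\mathcal H\bigr)$, which has indefinite sign in general, so monotonicity itself is conditional. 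The role of $\dim\GG\le 2$ is precisely that nilpotency forces $\GG$ abelian (killing $|[,]|^2$) and $\Lambda^3\mathfrak g^*=0$ (killing the fiber component of $H$), after which Lemma \ref{l:partialpositivity} makes the extra term nonnegative; its vanishing is then what gives $F_\infty\equiv 0$ and kills all components of $H_\infty$ except the $\Lambda^2\mathfrak g^*\wedge T^*M_\infty$ piece directly, with that last piece removed by tracing the $G$-equation and integrating over the compact base (Proposition \ref{p:entropyrigidity}). Your proposed mechanism (Bochner for $F$, ``$\theta$-linear pieces vanish, residual basic class killed by the drift structure'') does not engage with these indefinite terms and, as written, there is no argument that it closes; in particular the basic $\Lambda^3T^*M$ component of $H$ is eliminated by the positivity of the extra term, not by the soliton structure, and the surviving piece is $\theta$-quadratic, not $\theta$-linear.

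Second, the soliton-extraction step: your ``analytic crux'' (finiteness and convergence of a $\mu$-invariant, scale invariance, noncollapsing estimates to bound the entropy) is not what is needed and is itself unsupported here, since the conjugate heat equation (\ref{f:conjugateheat}) is nonstandard (it fixes a measure on the base, with the drift term in $q$), and neither finiteness nor attainment of an infimum over $f$ is established for it. The argument that actually works (Corollary \ref{c:LTBviaenergy}, and its analogue for \ref{c:LTBviaexpanders}) constructs a single positive solution of (\ref{f:conjugateheat}) on all of $[0,\infty)$ by solving backwards from times $t_j\to\infty$, extracting a diagonal limit, and proving positivity by pairing with forward heat kernels (the pairing $\int f v\,dV_g$ is conserved). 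With this fixed $f$, the functional is monotone along the original flow and its value along $s_i+t$ (respectively the rescaled times) converges to the finite value computed on the hypothesized smooth limit over a compact base; monotonicity then forces the functional to be constant in $t$ on the limit flow, and Propositions \ref{p:energyrigidity} and \ref{p:entropyrigidity} apply. In particular the boundedness you flag as the main obstacle is automatic once the limit is assumed to exist, and no noncollapsing input or groupoid-compactness discussion is required. Your treatment of part 1 is closer in spirit (exactness of $H_\infty$ plus the weighted divergence identity gives $H_\infty\equiv 0$, then one is in Lott's Ricci-flow setting), but note the correct weight is $e^{\ln\sqrt{\det G}-f}$, not $e^{-f}$ alone, and the remaining product/Ricci-flat structure is imported from Lott's rigidity for the reduced energy rather than from a ``compact steady soliton forces $f$ constant'' argument.
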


The recent example of compact nontrivial steady solitons for the pluriclosed flow show that the hypothesis of $[\bar{H}_{\infty}] = 0$ is necessary in the first part of the theorem (cf. Remark \ref{r:steadyrmk}).  Also, the result in the second part of the theorem is slightly more general, applying to cases when $\GG$ is abelian and the torsion vanishes on the fibers, which is implied by $\GG$ being nilpotent of dimension $\leq 2$.  Furthermore, hypotheses of this kind are necessary as illustrated by the case of collapsing type III solutions of pluriclosed flow on Inoue surfaces (cf. Remark \ref{e:Inoue}), noting that the pluriclosed flow in complex geometry is gauge-equivalent to generalized Ricci flow.  In fact, in this setting, the presence of the complex structure gives further rigidity of the limits.  The K\"ahler-Ricci flow occurs as a special case of pluriclosed flow, and the corollary below is related to recent works on collapsing of type III solutions (cf. \cite{TosattiZhang}).

\begin{cor} \label{c:maincor}
\begin{enumerate}
\item[1)] Let $(P, \bar{g}(\cdot), \bar{H}(\cdot), J(\cdot))$ denote a global solution to invariant pluriclosed flow, and suppose $(P_{\infty}, \bar{g}_{\infty}(\cdot), \bar{H}_{\infty}(\cdot), J_{\infty}(\cdot))$ is a subsequential limit flow at infinity.  If $[\bar{H}_{\infty}(\cdot)] = 0$, then $(\bar{g}_{\infty}(\cdot), \bar{H}_{\infty}(\cdot))$ is constant in time, $\bar{H}_{\infty} \equiv 0$ and $\bar{g}_{\infty}$ is a K\"ahler local product metric with $g_{\infty}$ K\"ahler Calabi-Yau.
\item[2)] Let $(P, \bar{g}(\cdot), \bar{H}(\cdot), J(\cdot))$ denote a global solution to invariant pluriclosed flow with $\dim \GG \leq 2$, and suppose $(P_{\infty}, \bar{g}_{\infty}(\cdot), \bar{H}_{\infty}(\cdot), J_{\infty}(\cdot))$ is a subsequential blowdown limit flow at infinity. In addition to the claims of Theorem \ref{t:mainthm} part 2), we have
\begin{itemize}
\item[1)] $J_{\infty}$ is fixed in time and $(\bar{g}_{\infty}(t), {J}_{\infty})$ is Kahler.
\item[2)] $M_{\infty}$ is even dimensional and $J_{\infty}=J_{1} \oplus J_{2}$ on $\mathfrak{G} \oplus TM_{\infty}$.
\end{itemize}
In particular, if $\dim \GG = 1$ then invariant pluriclosed flow has no subsequential limits in the sense of Definition \ref{d:convergence}.
\end{enumerate}
\end{cor}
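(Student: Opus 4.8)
The plan is to deduce both parts from Theorem \ref{t:mainthm}, using that invariant pluriclosed flow is gauge equivalent to invariant generalized Ricci flow in order to apply that theorem, and then reading off the extra constraints imposed by the $\GG$-invariant complex structure $J$.

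For part 1), the gauge equivalence shows that $(P, \bar{g}(\cdot), \bar{H}(\cdot))$ is a global solution of invariant generalized Ricci flow with subsequential limit flow $(P_\infty, \bar{g}_\infty(\cdot), \bar{H}_\infty(\cdot))$ at infinity, so Theorem \ref{t:mainthm} part 1) gives that $(\bar{g}_\infty(\cdot), \bar{H}_\infty(\cdot))$ is constant in time, $\bar{H}_\infty \equiv 0$, and $\bar{g}_\infty$ is a local product with $g_\infty$ Ricci flat. It remains to track $J_\infty$. The diffeomorphisms implementing the gauge equivalence are generated by a vector field built from the Lee form, hence controlled by the torsion; since $\bar{H}_\infty \equiv 0$ this vector field degenerates in the limit, so along the chosen subsequence $J_\infty$ is a fixed integrable complex structure whose Bismut torsion is $\bar{H}_\infty$. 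Since $\bar{H}_\infty$, which equals $-d^{c}\omega_\infty$, vanishes identically, the fundamental form $\omega_\infty$ of $(\bar{g}_\infty, J_\infty)$ is closed, so $(\bar{g}_\infty, J_\infty)$ is K\"ahler; the de Rham decomposition of a K\"ahler manifold is by K\"ahler factors, so the local product is a product of K\"ahler manifolds and the Ricci flat factor $(g_\infty, J_\infty)$ is K\"ahler Calabi-Yau.

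For part 2), the gauge equivalence together with Theorem \ref{t:mainthm} part 2) yields $H_\infty \equiv 0$, $F_\infty \equiv 0$, $\det G_{ij}$ constant, and the twisted Einstein equations (\ref{f:Ricciexpandersoliton}); as in part 1), $\bar{H}_\infty \equiv 0$ makes $(\bar{g}_\infty(t), J_\infty)$ K\"ahler and, via degeneration of the generating vector field, fixes $J_\infty$ in time. For the splitting, $F_\infty \equiv 0$ means the principal connection is flat, so its horizontal distribution $\mathcal{H}_\infty$ is integrable and $\bar{g}_\infty$ is locally a Riemannian product $(\mathfrak{G}, G) \times (M_\infty, g_\infty)$; the $\GG$-invariance of $J_\infty$ and its compatibility with this block metric force $J_\infty$ to preserve the vertical distribution $\mathfrak{G}$, hence also its orthogonal complement $\mathcal{H}_\infty \cong TM_\infty$, so with $J_1 := J_\infty|_{\mathfrak{G}}$ and $J_2 := J_\infty|_{TM_\infty}$ we get $J_\infty = J_1 \oplus J_2$. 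Then $\dim \mathfrak{G}$ is even, and since $P_\infty$ is a complex manifold $\dim M_\infty = \dim P_\infty - \dim \mathfrak{G}$ is even as well. Finally if $\dim \GG = 1$ then $\dim \mathfrak{G} = 1$ admits no complex structure $J_1$, a contradiction, so invariant pluriclosed flow has no subsequential limits in the sense of Definition \ref{d:convergence}.

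The step I expect to be the main obstacle is showing that $J_\infty$ preserves the vertical distribution $\mathfrak{G}$, i.e.\ that the complex structure is compatible with the bundle splitting in the limit. I would argue this by using that the fibers are the $\GG$-orbits, which under $F_\infty \equiv 0$ are the leaves of a parallel, totally geodesic vertical foliation, together with the facts that $J_\infty$ is $\GG$-invariant and an isometry of the block-diagonal metric $\bar{g}_\infty$; a parallel-transport comparison along the fibers then yields $J_\infty \mathfrak{G} = \mathfrak{G}$. A subsidiary technical point is the degeneration of the gauge-equivalence vector field in the blowdown limit needed to fix $J_\infty$ in time; this should follow from the a priori estimates already underlying Theorem \ref{t:mainthm} together with $\bar{H}_\infty \equiv 0$.
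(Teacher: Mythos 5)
Your reduction of both parts to Theorem \ref{t:mainthm} via the gauge equivalence is the intended route, and your treatment of part 1) and of the statements ``$J_\infty$ fixed in time'' and ``$(\bar g_\infty(t),J_\infty)$ K\"ahler'' (via $\bar H_\infty\equiv 0$, so the limit is a K\"ahler--Ricci flow) is consistent with the paper. However, your argument for the key splitting $J_\infty=J_1\oplus J_2$ has a genuine gap. You claim that $F_\infty\equiv 0$, $\GG$-invariance of $J_\infty$, and compatibility with the block-diagonal metric force $J_\infty\mathfrak G=\mathfrak G$, to be proved by a parallel-transport comparison along the (totally geodesic, flat-connection) fibers. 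This is false as stated: take the flat K\"ahler torus $T^2$ (or $\mathbb C$) with the standard circle (translation) action in one factor. There $F\equiv 0$, the metric is a product, $J$ is invariant, parallel, and compatible with the block metric, yet $J$ maps the vertical direction into the horizontal one. So nothing about invariance, flatness, or compatibility at a fixed time forces $J_\infty$ to preserve the vertical distribution, and the parallel-transport refinement cannot rescue the argument, since in the counterexample $J$ is parallel.

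The input you are missing is the time dependence of the limit, which is exactly what the paper uses: by the rigidity of Corollary \ref{c:LTBviaexpanders}, the limit satisfies (\ref{f:Ricciexpandersoliton}), which says the fiber metric $G$ is constant in $t$ while the base metric expands homothetically (the $\tfrac{1}{2t}g$ term). Since $J_\infty$ is fixed in time and must be Hermitian for $\bar g_\infty(t)$ at \emph{every} $t$, writing $J_\infty$ in block form with respect to $\mathfrak G\oplus TM_\infty$ and imposing $\bar g_\infty(t)(J_\infty\cdot,J_\infty\cdot)=\bar g_\infty(t)(\cdot,\cdot)$ at two different times kills the off-diagonal blocks; this yields $J_\infty=J_1\oplus J_2$, the evenness of $\dim\mathfrak G$ and $\dim M_\infty$, and hence the nonexistence statement for $\dim\GG=1$ (which depends precisely on this splitting, so the gap is not cosmetic). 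With this replacement for your splitting step, the rest of your outline matches the paper's proof.
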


The key tools for proving Theorem \ref{t:mainthm} and Corollary \ref{c:maincor} are modified energy and entropy functionals for the generalized Ricci flow on twisted principal  bundles.  Among Perelman's key discoveries were the energy and shrinker entropy functionals for Ricci flow \cite{Perelman1}, which are monotonically increasing along Ricci flow and fixed on steady or shrinking solitons, respectively.  Shortly thereafter Feldman-Ilmanen-Ni discovered the expander entropy \cite{FIN}, which is monotonically increasing along Ricci flow and fixed on an expanding soliton.  By applying Hamilton's compactness theory, these functionals all immediately lead to classification results for certain nonsingular solutions which furthermore satisfy a noncollapsing hypothesis.  However, as remarked above, nonsingular solutions of Ricci flow can collapse, and to treat these solutions Lott \cite{Lott2} developed modified energy and entropy functionals for invariant solutions on twisted principal bundles which capture the geometry of the collapsing structure.  As the fibers of these bundles may be noncompact, one cannot simply use the known functionals and incorporate the symmetries.  Rather, one is forced to define integrals over the base space of the bundle, which is assumed compact, and to redefine the conjugate heat equation to preserve a measure on this space.  This is a subtle change which makes their monotonicity a nontrivial question, and indeed there are ``extra'' terms in the evolution of these energies which can have varying signs.

We extend this analysis in several ways.  First, an extension of Perelman's energy functional to generalized Ricci flow was found in \cite{OSW}, while an extension of the expander entropy was discovered by the second author in \cite{Streetsexpent}.  We follow the strategy described above and define dimensionally-reduced versions of these functionals, as well as a shrinker entropy, and derive their evolution equations under generalized Ricci flow.  We note that Lott's work \cite{Lott2} restricted to the case where the structure group of the bundle is abelian.  As remarked above, one of our main intended applications of this work is to the classification of type III solutions of the pluriclosed flow on complex surfaces, where collapse to a one-dimensional space is known to happen \cite{Boling}, with a three-dimensional and hence potentially nonabelian structure group.  For this reason we treat the case of a general nilpotent structure group, and so these evolution equations are extensions of Lott's even in the case of Ricci flow.  The nontrivial Lie bracket introduces terms of varying signs in the different functionals, rendering their application more delicate.

Here is an outline of the rest of this paper.  In \S \ref{SECTDC} we recall fundamental aspects of invariant metrics on principal bundles.  In \S \ref{s:dimredGRF} we derive the dimensionally-reduced evolution equations for generalized Ricci flow.  Next in \S \ref{s:energy} we define the modified energy and entropy functionals and derive their evolution along generalized Ricci flow.  We analyze these evolutions in \S \ref{s:rigidity} to establish the main rigidity results of Theorem \ref{t:mainthm} and Corollary \ref{c:maincor}.

\section{Geometry of invariant metrics on principal bundles} \label{SECTDC}

Our aim is to study blowdown limits of invariant solutions to generalized Ricci flow on principal bundles.  As we are interested in capturing the geometry of solutions along which the fibers collapse, it is useful to recast these flow solutions in terms of a system of PDEs on the base manifold. To do so we will need to describe a correspondence between invariant sections of $TP$ and sections of $\mathcal{E}:=\mathfrak{G}\oplus TM$, where $\mathfrak{G}$ is the adjoint bundle, and build Lie algebroid structures and connections on $\mathcal{E}$.  We discuss convergence of invariant metrics and one-parameter families in preparation for deriving the reduced generalized Ricci flow system in Section \ref{s:dimredGRF}.

\subsection{Setup}

We fix a principal $\mathcal{G}$ bundle $\pi: P\rightarrow M$, and let $\bar{g}$ denote an invariant metric and $\bar{H}$ an invariant three-form on $P$.  Given $\bar{g}$, we let $\bar{A}$ denote the associated connection on $P$ and $\mathcal{E}=\mathfrak{G}\oplus TM$, where $\mathfrak{G}$ is the adjoint bundle. Also let $F\in \wedge^{2}T^{*}M\otimes \mathfrak{G}$ correspond to the equivariant horizontal curvature $\bar{F}:=d\bar{A}+\tfrac{1}{2}[\bar{A},\bar{A}]$.  In some places we will also assume the existence of $\bar{J}$ an invariant complex structure on $P$, and in this case $\bar{g}$ will be compatible with $\bar{J}$, and furthermore 
\begin{align*}
\bar{H} = -d^c \bar{\gw}, 
\end{align*}
where $\gw(X,Y) = g(JX, Y)$.

Given a principal connection $\bar{A}$, we construct an isomorphism $\delta: TP \longrightarrow \pi^{*}\mathcal{E}$ given by $\delta(Z)= \{p,\bar{A}Z \} + \pi_{*}Z,$ where $Z \in T_{p}P$. If $e \in \Gamma(\mathcal{E})$ then $\delta^{-1}\pi^{*}e$ is an invariant section of $TP$, where $\pi^{*}e$ is the associated section of $\pi^{*}\mathcal{E}$. Moreover if $Z$ is an invariant section of $TP$ then it is straightforward to show that $Z= \delta^{-1}\pi^{*}e$ for some unique $e \in \Gamma(\mathcal{E})$. We will typically denote $\delta^{-1}\pi^{*}e$ by $\tilde{e}$.  This correspondence extends naturally to invariant sections of $(\otimes^{k}T^{*}P)\otimes (\otimes^{l}TP) $
and sections of $(\otimes^{k}\mathcal{E^{*}})\otimes (\otimes^{l}\mathcal{E})$.  In particular, by using this identification, for invariant structures $\bar{g}$,  $\bar{H}$ and $\bar{J}$ on $P$, this correspondence respectively yields the fiberwise metric $g_{_{\mathcal{E}}}=G\oplus g$ on $\mathcal{E}$,  $H \in \wedge^{3}\mathcal{E}^{*}$, and $J \in \End \mathcal{E}$ that satisfies $J^{2}=-1$.

\subsection{Lie Algebroid and Differential Structures}

Given the setup of the previous subsection, we now introduce differential and  Lie algebroid structures on $\mathcal{E}$. 

\begin{defn}
For $e_{1}, e_{2} \in \Gamma(\mathcal{E})$. Define $[e_{1},e_{2}]$ to be the unique section of $\mathcal{E}$ that satisfies \[ \widetilde{[e_{1},e_{2}]}= [\tilde{e_{1}},\tilde{e_{2}}]_{Lie},\] where $[,]_{Lie}$ is the Lie bracket on $\Gamma(TP)$. 
\end{defn}

If we define $ \tau: \mathcal{E}\longrightarrow TM$ by $\tau(\eta + v)= v $, where $\eta \in \mathfrak{g}$,  we then have: 

\begin{prop} \label{p:Liealgprop} Let $e_{1},e_{2} \in \Gamma(\mathcal{E})$ and let $f\in C^{\infty}(M)$. 
\begin{itemize}
\item [a)] $[e_{1},fe_{2}]= f[e_{1},e_{2}] +\tau(e_{1})[f](e_{2})$.
\item [b)] $\tau([e_{1},e_{2}]) = [\tau(e_{1}),\tau(e_{2})]_{Lie}$.
\item [c)] $[,]$ on $\Gamma(\mathcal{E})$ satisfies the Jacobi identity. 
\end{itemize}
In other words, $\tau:(\mathcal{E}, [,]) \longrightarrow (TM, [,]_{Lie})$ is a Lie algebroid structure on $\mathcal{E}$.
\end{prop}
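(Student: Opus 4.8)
The plan is to verify each of the three properties (a), (b), (c) by unwinding the definition of $[\cdot,\cdot]$ on $\Gamma(\mathcal{E})$ through the isomorphism $e \mapsto \widetilde{e} = \delta^{-1}\pi^* e$ and reducing everything to the corresponding statements for the Lie bracket on $\Gamma(TP)$. The essential point throughout is that $e \mapsto \widetilde{e}$ is a $C^\infty(M)$-linear isomorphism onto the invariant vector fields on $P$ (pulling back functions via $\pi$), and that the invariant vector fields form a Lie subalgebra of $\Gamma(TP)$; so the bracket on $\Gamma(\mathcal{E})$ is well-defined, and one only needs to track how the two Leibniz-type identities and the Jacobi identity transport across.

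For part (a), I would start from $\widetilde{fe_2} = (\pi^* f)\, \widetilde{e_2}$ (by $C^\infty(M)$-linearity of the correspondence), apply the ordinary Leibniz rule for the Lie bracket of vector fields, namely $[\widetilde{e_1}, (\pi^*f)\widetilde{e_2}]_{Lie} = (\pi^* f)[\widetilde{e_1},\widetilde{e_2}]_{Lie} + (\widetilde{e_1}(\pi^* f))\,\widetilde{e_2}$, and then identify the last term. Here $\widetilde{e_1}(\pi^* f) = \pi^*(\tau(e_1)[f])$ because the $\mathfrak{G}$-component of $\widetilde{e_1}$ is vertical (tangent to fibers, on which $\pi^* f$ is constant) while the horizontal part projects to $\tau(e_1)$; so this term equals $\widetilde{\tau(e_1)[f]\, e_2}$, and applying $\widetilde{\cdot}^{-1}$ gives (a). For part (b), the map $\tau:\mathcal{E}\to TM$ corresponds under $\delta$ to $\pi_*: TP \to TM$ restricted to invariant fields, so $\tau([e_1,e_2])$ is computed by $\pi_*[\widetilde{e_1},\widetilde{e_2}]_{Lie}$; since $\pi$ is a submersion and $\widetilde{e_i}$ are $\pi$-projectable with $\pi_* \widetilde{e_i} = \tau(e_i)$, naturality of the Lie bracket under pushforward by a submersion gives $\pi_*[\widetilde{e_1},\widetilde{e_2}]_{Lie} = [\pi_*\widetilde{e_1}, \pi_*\widetilde{e_2}]_{Lie} = [\tau(e_1),\tau(e_2)]_{Lie}$. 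For part (c), the Jacobi identity is immediate: applying $\widetilde{\cdot}$ to the left side of the Jacobi identity on $\Gamma(\mathcal{E})$ produces the Jacobiator of $\widetilde{e_1},\widetilde{e_2},\widetilde{e_3}$ for the Lie bracket on $TP$, which vanishes, and since $\widetilde{\cdot}$ is injective the identity descends.

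The only genuinely delicate point — and the one I would write out with care — is the identification in part (a) that $\widetilde{e_1}(\pi^* f) = \pi^*\big(\tau(e_1)[f]\big)$, i.e. that only the $TM$-component of $e_1$ contributes to differentiating a pulled-back function. This follows because $\delta(Z) = \{p, \bar A Z\} + \pi_* Z$, so the section $\widetilde{e_1}$ decomposes as a vertical piece (valued in $\mathfrak{g}$, killing $\pi^* f$) plus the horizontal lift of $\tau(e_1)$ with respect to the connection $\bar A$; the horizontal lift of a vector field $v$ on $M$ is $\pi$-related to $v$, hence differentiates $\pi^* f$ to $\pi^*(v[f])$. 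One should also remark that well-definedness of $[\cdot,\cdot]$ on $\Gamma(\mathcal{E})$ — that $[\widetilde{e_1},\widetilde{e_2}]_{Lie}$ is again invariant, so lies in the image of $\widetilde{\cdot}$ — is what makes the Definition legitimate; this is the standard fact that the flow of an invariant vector field commutes with the $\mathcal{G}$-action, so the bracket of two invariant fields is invariant. With these observations the three bullet points are routine, and the final sentence identifying $(\mathcal{E},[\cdot,\cdot],\tau)$ as a Lie algebroid over $M$ is just the definition of a Lie algebroid, all of whose axioms are exactly (a), (b), (c).
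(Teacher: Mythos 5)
Your proof is correct; the paper in fact states Proposition \ref{p:Liealgprop} without proof, treating it as routine, and your argument is exactly the intended one: transport everything through the isomorphism $e \mapsto \tilde{e}$ onto invariant vector fields, use the Leibniz rule together with the key identity $\tilde{e}_1(\pi^* f) = \pi^*\big(\tau(e_1)[f]\big)$ for (a), $\pi$-relatedness of $\tilde{e}_i$ and $\tau(e_i)$ for (b), and injectivity of $\widetilde{\cdot}$ plus the Jacobi identity on $\Gamma(TP)$ for (c). Your remarks on well-definedness (invariance of the bracket of invariant fields) and on why only the $TM$-component of $e_1$ differentiates $\pi^* f$ are precisely the points worth writing out, so nothing is missing.
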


It follows from Proposition (\ref{p:Liealgprop}) that we obtain a connection on $\mathfrak{g}$ defined by
\begin{align} \label{f:Liealgconn}
D_{v}\eta:= [v,\eta].
\end{align}

The algebroid bracket satisfies the following properties.

\begin{prop} \label{PROPBRA} Let $s:U \longrightarrow P$ be a local section of $P$, $x,y \in \mathfrak{g}$ and $v,w \in \Gamma(TM)$.
\begin{itemize}
\item[a)] $[\cdot,\cdot]$ restricts to a Lie bracket on each fiber of $\mathfrak{g}$.
\item[b)] $[\{s,x\}, \{s,y\}]=-\{s,[x,y]\}.$
\item[c)] $D_{v}\{s,x\} =\{s, [(s^{*}\bar{A})v, x]\}.$
\item[d)] $[v,w]= [v,w]_{Lie} - F(v,w)$.
\end{itemize}
\end{prop}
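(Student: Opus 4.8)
The plan is to push every identity down to a computation of ordinary Lie brackets of invariant vector fields on $P$ via the correspondence $e \mapsto \widetilde{e} = \delta^{-1}\pi^{*}e$, after first making this correspondence explicit on the two summands of $\mathcal{E} = \mathfrak{G}\oplus TM$. From $\delta(Z) = \{p,\bar{A}Z\}+\pi_{*}Z$ one reads off that for $v \in \Gamma(TM)$ (viewed as $0\oplus v$) the field $\widetilde{v}$ is the $\bar{A}$-horizontal lift $v^{h}$, and that for a local section $s\colon U\to P$ and $x\in\mathfrak{g}$ the field $\widetilde{\{s,x\}}$ is vertical, equal at $p = s(\pi(p))\cdot g$ to the fundamental vector field of $\mathrm{Ad}_{g^{-1}}x$; equivalently, in the trivialization $\phi\colon U\times\mathcal{G}\to P|_{U}$, $\phi(u,g)=s(u)\cdot g$, the field $\widetilde{\{s,x\}}$ corresponds to the right-invariant vector field on the $\mathcal{G}$-factor with value $x$ at the identity. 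The one substantive input here is the equivariance $\{s(u),x\}=\{s(u)g,\mathrm{Ad}_{g^{-1}}x\}$ of the adjoint-bundle representative; note also that each $\widetilde{\{s,x\}}$ is vertical.

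With this dictionary, (a) is immediate from Proposition \ref{p:Liealgprop}: for $\eta_{1},\eta_{2}\in\Gamma(\mathfrak{G})=\ker\tau$, part (b) of that proposition forces $\tau([\eta_{1},\eta_{2}])=[\tau\eta_{1},\tau\eta_{2}]_{Lie}=0$, so $[\eta_{1},\eta_{2}]\in\Gamma(\mathfrak{G})$, while part (a) of that proposition together with the antisymmetry of $[\cdot,\cdot]$ (inherited from $[\cdot,\cdot]_{Lie}$) makes $[\cdot,\cdot]$ $C^{\infty}(M)$-bilinear on $\Gamma(\mathfrak{G})$, hence tensorial, i.e.\ a fiberwise bracket; antisymmetry and the Jacobi identity descend from $\Gamma(TP)$ (or from part (c) of that proposition). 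For (b) I would restrict to a fiber $\pi^{-1}(u)\cong\mathcal{G}$: the bracket of two vertical fields is vertical, so by the dictionary $[\widetilde{\{s,x\}},\widetilde{\{s,y\}}]_{Lie}$ restricted to that fiber is $[R_{x},R_{y}]=-R_{[x,y]}$, the minus sign being the standard one for right-invariant fields on a Lie group. Hence $\widetilde{[\{s,x\},\{s,y\}]} = -\widetilde{\{s,[x,y]\}}$, and injectivity of $e\mapsto\widetilde{e}$ gives $[\{s,x\},\{s,y\}]=-\{s,[x,y]\}$.

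For (d) I would split $\widetilde{[v,w]}=[v^{h},w^{h}]_{Lie}$ into horizontal and vertical parts. Since $v^{h},w^{h}$ are $\pi$-related to $v,w$, the horizontal part of $[v^{h},w^{h}]_{Lie}$ is $([v,w]_{Lie})^{h}=\widetilde{[v,w]_{Lie}}$. For the vertical part, apply $\bar{A}$ and use $\bar{A}v^{h}=\bar{A}w^{h}=0$ in the structure equation $\bar{F}=d\bar{A}+\tfrac{1}{2}[\bar{A},\bar{A}]$ to get $\bar{F}(v^{h},w^{h}) = d\bar{A}(v^{h},w^{h}) = -\bar{A}([v^{h},w^{h}]_{Lie})$; since $F$ corresponds to $\bar{F}$, i.e.\ $\bar{F}_{p}(v^{h},w^{h})$ is the $\mathfrak{g}$-representative of $F(v,w)$ at $p$, the vertical part of $[v^{h},w^{h}]_{Lie}$ is $-\widetilde{F(v,w)}$. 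Therefore $\widetilde{[v,w]} = \widetilde{[v,w]_{Lie}} - \widetilde{F(v,w)}$, which is (d).

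Finally (c), the most computational part. By (\ref{f:Liealgconn}), $D_{v}\{s,x\}=[v,\{s,x\}]$, so $\widetilde{D_{v}\{s,x\}}=[v^{h},\widetilde{\{s,x\}}]_{Lie}$, and I would evaluate this in the trivialization $\phi$. From the standard formula $\phi^{*}\bar{A}=\mathrm{Ad}_{g^{-1}}(s^{*}\bar{A})+\theta$, where $\theta$ is the Maurer--Cartan form and $s^{*}\bar{A}$ is pulled back to $U\times\mathcal{G}$, one finds that $v^{h}$ corresponds to $v - R_{(s^{*}\bar{A})v}$, the second summand being the $u$-dependent right-invariant field in the $\mathcal{G}$-direction generated by $(s^{*}\bar{A})v$, while $\widetilde{\{s,x\}}$ is the $u$-independent field $R_{x}$. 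Since $v$ differentiates only along $U$ and $R_{x}$ only along $\mathcal{G}$ they commute, so $[v^{h},\widetilde{\{s,x\}}]_{Lie}$ reduces to $-[R_{(s^{*}\bar{A})v},R_{x}]=R_{[(s^{*}\bar{A})v,\,x]}$, which under the dictionary is $\widetilde{\{s,[(s^{*}\bar{A})v,x]\}}$; injectivity of $e\mapsto\widetilde{e}$ then gives (c). The main obstacle I anticipate is not a deep idea but careful bookkeeping of sign conventions: one must verify that the correspondence sends $\{s,x\}$ to the \emph{right}-invariant (not left-invariant) vector field along the fibers --- this is exactly what makes the fibers of $\mathfrak{G}$ carry the bracket \emph{opposite} to that of $\mathfrak{g}$ in the frame $\{s,\cdot\}$ and produces the sign in (b) --- and, correspondingly, that $s^{*}\bar{A}$ enters the trivialization formula with the sign needed for (c); once the dictionary of the first paragraph is pinned down the rest is routine.
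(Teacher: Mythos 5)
Your proof is correct, and it is essentially complete. The paper in fact states Proposition \ref{PROPBRA} without proof, treating it as a routine consequence of the definition $\widetilde{[e_{1},e_{2}]}=[\tilde{e}_{1},\tilde{e}_{2}]_{Lie}$, so there is no written argument to compare against; your dictionary --- $\tilde{v}=v^{h}$ and $\widetilde{\{s,x\}}$ equal to the \emph{right}-invariant field $R_{x}$ on the group factor in the trivialization $\phi(u,g)=s(u)g$ --- is exactly the mechanism that makes all four parts go through, and it correctly produces the minus sign in (b) (via $[R_{x},R_{y}]=-R_{[x,y]}$) and the curvature term in (d) (via $\bar{F}(v^{h},w^{h})=-\bar{A}([v^{h},w^{h}]_{Lie})$). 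The one step that deserves the care you gave it is the bracket $[R_{(s^{*}\bar{A})v},R_{x}]$ with a base-point--dependent coefficient: since both fields are tangent to each fiber $\{u\}\times\mathcal{G}$, the bracket restricts fiberwise to a bracket of genuine right-invariant fields, so your reduction to $-R_{[(s^{*}\bar{A})v,x]}$ is legitimate. Your sign conventions are also consistent with how the paper later uses this proposition (e.g.\ in the proofs of Proposition \ref{PROPNABDEC} and Lemma \ref{LEMMADDG1}).
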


Furthermore, associated with the Lie algebroid $\tau:(\mathcal{E}, [\cdot,\cdot]) \longrightarrow (TM, [\cdot,\cdot]_{Lie})$ is the following exterior derivative which squares to zero.

\begin{defn} \label{DEFD1}
Define $d: \Gamma(\wedge^{k}\mathcal{E^{*}})\longrightarrow \Gamma(\wedge^{k+1}\mathcal{E^{*}})$ by 
\begin{align*}
d\sigma(e_{1},...,e_{k+1})=& \sum_{1\leq i\leq k+1}(-1)^{i-1}\tau(e_{i})[\sigma(e_{1},...,\hat{e_{i}},...e_{k+1})] +  \\  
 &  \sum_{1\leq i<j \leq k+1} (-1)^{i+j}\sigma([e_{i},e_{j}],e_{1},...,\hat{e_{i}},...,\hat{e_{j}},...,e_{k+1})].
\end{align*}
\end{defn}

\begin{prop}
For $\sigma \in \Gamma(\wedge^{k}\mathcal{E^{*}})$,
\[\widetilde{d\sigma}= d\tilde{\sigma}. \] 
\end{prop}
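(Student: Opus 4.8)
The plan is to verify the identity $\widetilde{d\sigma} = d\tilde\sigma$ by direct computation, reducing to the defining properties of the tilde-correspondence and the algebroid bracket established in the earlier propositions. First I would recall that for an invariant section $\tilde\sigma$ of $\wedge^k T^*P$ obtained from $\sigma \in \Gamma(\wedge^k \mathcal{E}^*)$, and for invariant sections $\tilde e_1, \dots, \tilde e_{k+1}$ of $TP$ coming from $e_1, \dots, e_{k+1} \in \Gamma(\mathcal{E})$, one has $\tilde\sigma(\tilde e_1, \dots, \tilde e_k) = \pi^*(\sigma(e_1, \dots, e_k))$, i.e. the function on $P$ obtained by pulling back the function $\sigma(e_1,\dots,e_k)$ on $M$. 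Since both $d\sigma$ and $d\tilde\sigma$ are tensorial, it suffices to check equality when evaluated on such invariant vector fields $\tilde e_1, \dots, \tilde e_{k+1}$, and since the tilde-map on forms is characterized by its values on tuples of invariant vector fields, it suffices to show $(\widetilde{d\sigma})(\tilde e_1, \dots, \tilde e_{k+1}) = (d\tilde\sigma)(\tilde e_1, \dots, \tilde e_{k+1})$ as functions on $P$.

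Next I would expand the right-hand side using the standard Cartan formula for the de Rham differential on $P$:
\[
d\tilde\sigma(\tilde e_1, \dots, \tilde e_{k+1}) = \sum_i (-1)^{i-1} \tilde e_i \bigl[\tilde\sigma(\tilde e_1, \dots, \hat{\tilde e_i}, \dots, \tilde e_{k+1})\bigr] + \sum_{i<j} (-1)^{i+j} \tilde\sigma\bigl([\tilde e_i, \tilde e_j]_{Lie}, \tilde e_1, \dots, \hat{\tilde e_i}, \dots, \hat{\tilde e_j}, \dots, \tilde e_{k+1}\bigr).
\]
For the first sum, $\tilde\sigma(\tilde e_1, \dots, \hat{\tilde e_i}, \dots) = \pi^*(\sigma(e_1, \dots, \hat{e_i}, \dots))$ is a pullback of a function on $M$, so differentiating it by the invariant vector field $\tilde e_i$ picks out only the horizontal/base component, which is precisely $\tau(e_i)$; hence $\tilde e_i[\pi^* h] = \pi^*(\tau(e_i)[h])$ for $h \in C^\infty(M)$. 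This matches the first sum in Definition \ref{DEFD1}. For the second sum, I invoke the definition of the algebroid bracket: $[\tilde e_i, \tilde e_j]_{Lie} = \widetilde{[e_i, e_j]}$, so $\tilde\sigma([\tilde e_i, \tilde e_j]_{Lie}, \dots) = \tilde\sigma(\widetilde{[e_i,e_j]}, \tilde e_1, \dots) = \pi^*(\sigma([e_i,e_j], e_1, \dots))$, matching the second sum. Assembling these and pulling $\pi^*$ out front yields $d\tilde\sigma(\tilde e_1, \dots, \tilde e_{k+1}) = \pi^*\bigl(d\sigma(e_1, \dots, e_{k+1})\bigr) = (\widetilde{d\sigma})(\tilde e_1, \dots, \tilde e_{k+1})$, as desired.

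The main obstacle, though it is more a bookkeeping point than a genuine difficulty, is justifying carefully the claim that $\tilde e_i[\pi^* h]$ depends only on $\tau(e_i)$ — that is, that the vertical part of $\tilde e_i$ annihilates pullbacks from the base. This follows from the construction $\delta(Z) = \{p, \bar A Z\} + \pi_* Z$ and $\tau(\eta + v) = v$: the vertical component of $\tilde e$ lies in $\ker \pi_*$, so it kills $\pi^* h$. One should also confirm that $d\tilde\sigma$ is again invariant (so that it lies in the image of the tilde-map), which is immediate since $\tilde\sigma$ is invariant and $d$ commutes with the $\mathcal{G}$-action. A final remark is that one must know the tilde-correspondence on forms is determined by its action on invariant vector field arguments; this is exactly the content of the extension of the correspondence to tensor bundles recorded after the isomorphism $\delta$ was introduced, together with the fact that a $k$-form is determined by its values on a local frame, which can be taken invariant.
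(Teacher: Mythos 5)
Your proof is correct. The paper states this proposition without proof, and your argument is exactly the natural one the authors leave to the reader: evaluate both $(k+1)$-forms on lifted invariant frames $\tilde{e}_1,\dots,\tilde{e}_{k+1}$, apply the Cartan formula for the de Rham differential on $P$, use that the vertical part of $\tilde{e}_i$ annihilates $\pi^*$-pullbacks so that $\tilde{e}_i[\pi^*h]=\pi^*(\tau(e_i)[h])$, and use the defining relation $[\tilde{e}_i,\tilde{e}_j]_{Lie}=\widetilde{[e_i,e_j]}$ to match the bracket terms with Definition \ref{DEFD1}; together with tensoriality and the invariance of $d\tilde{\sigma}$, which you both address, this closes the argument.
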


\begin{cor} Let $(\bar{g},\bar{J})$ be an invariant and compatible metric and complex structure on $P \rightarrow M$ and let $\bar{H}=-d^{c}\bar{w}$, where $\bar{w}(*,*)=\bar{g}(\bar{J}*,*)$. Also let $(g_{_{\mathcal{E}}},J,H)$ be the corresponding data on $\mathcal{E}$. Then $H(*,*,*)=dw(J*,J*,J*),$ where $w(*,*)=g_{_{\mathcal{E}}}(J*,*)$.
\end{cor}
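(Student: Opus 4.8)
The plan is to leverage the two results immediately preceding this corollary, namely the identity $\widetilde{d\sigma} = d\tilde\sigma$ relating the algebroid exterior derivative on $\wedge^\bullet \mathcal{E}^*$ to the usual exterior derivative on invariant forms, together with the earlier observation that the correspondence $\tilde\cdot$ intertwines $(\bar g, \bar J, \bar H)$ on $P$ with $(g_{\mathcal{E}}, J, H)$ on $\mathcal{E}$. Since the whole point of the $\tilde\cdot$ machinery is to convert statements about invariant tensors on $P$ into statements about tensors on $\mathcal{E}$, the corollary should follow by transporting the hypothesis $\bar H = -d^c\bar\gw$ down to $\mathcal{E}$ via this dictionary, together with a purely algebraic computation of how $d^c$ on $\mathcal{E}$ reads in terms of $d$ and $J$.

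First I would note that the correspondence $\tilde\cdot$ is tensorial and intertwines wedge products and the action of $J$: concretely, $\widetilde{J\sigma} = \bar J \tilde\sigma$ in the natural sense (since $J \in \End\mathcal{E}$ corresponds to $\bar J$ on $TP$), and likewise the fiberwise metric matches, so that $\gw(*,*) = g_{\mathcal{E}}(J*,*)$ corresponds exactly to $\bar\gw(*,*) = \bar g(\bar J*,*)$, i.e. $\tilde\gw = \bar\gw$. Next I would recall the definition $d^c = \bar J^{-1} d \bar J$ (or the sign-convention variant $d^c \alpha = (-1)^{\deg} \bar J \, d \, \bar J \alpha$ used in the paper) acting on forms on $P$, and define the corresponding operator on $\wedge^\bullet\mathcal{E}^*$ by the same formula with $J$ in place of $\bar J$ and the algebroid $d$ in place of the de Rham $d$. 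The identity $\widetilde{d\sigma} = d\tilde\sigma$ plus $\widetilde{J\sigma} = \bar J\tilde\sigma$ then immediately gives $\widetilde{d^c\sigma} = d^c\tilde\sigma$, so applying $\tilde\cdot$ to $\bar H = -d^c\bar\gw$ and using $\tilde\gw = \bar\gw$ yields $\widetilde{(-d^c\gw)} = \bar H = \widetilde{H}$, whence $H = -d^c\gw$ on $\mathcal{E}$ by injectivity of $\tilde\cdot$.

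Finally I would carry out the elementary linear-algebra step, valid on any vector bundle (or manifold) with an almost complex structure $J$ and a $d$-like operator: for a two-form $\gw$, one has $-d^c\gw(X,Y,Z) = (d\gw)(JX, JY, JZ)$. This is a fixed pointwise identity — it only uses $J^2 = -1$ and the definition $d^c = J^{-1} d J$ extended to forms — so it holds verbatim for the algebroid $d$ on $\wedge^\bullet\mathcal{E}^*$ as well, since that $d$ satisfies the graded Leibniz rule and squares to zero just like an ordinary exterior derivative (Definition \ref{DEFD1} and the surrounding propositions). Combining this with the previous paragraph gives $H(*,*,*) = dw(J*,J*,J*)$, as claimed.

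The main obstacle is bookkeeping rather than conceptual: one must be careful that the sign and normalization conventions for $d^c$ (and the factor $(-1)^{\deg}$ in front) are used consistently between $P$ and $\mathcal{E}$, and that the isomorphism $\tilde\cdot$ genuinely commutes with $J$ and respects the metric pairing — the latter was asserted in the Setup subsection but should be invoked explicitly. Provided those compatibilities are in hand, there is no analytic content and the proof is a two-line formal manipulation; no compactness, ellipticity, or PDE input is needed here.
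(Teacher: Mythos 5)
Your proof is correct and is essentially the paper's intended argument: the corollary is left as an immediate consequence of the preceding proposition $\widetilde{d\sigma}=d\tilde{\sigma}$ together with the fact (asserted in the Setup) that the correspondence $\delta$ intertwines $(\bar{g},\bar{J},\bar{H})$ with $(g_{_{\mathcal{E}}},J,H)$, combined with the standard relation $-d^{c}\bar{\gw}=d\bar{\gw}(\bar{J}\cdot,\bar{J}\cdot,\bar{J}\cdot)$. The only point worth stating explicitly is that this last identity uses the $J$-invariance of $\gw$ (i.e.\ compatibility of $g_{_{\mathcal{E}}}$ with $J$) in addition to $J^{2}=-1$, after which, with the convention $d^{c}=J^{-1}dJ$, it is purely algebraic and so transfers verbatim to the algebroid operator $d$ on $\wedge^{\bullet}\mathcal{E}^{*}$, exactly as you say.
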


In the upcoming sections we will need the following.
\begin{prop} \label{PROPDB1} Let $B \in \Gamma(\wedge^{2}\mathcal{E}^{*})$ and $e_{i}= \eta_{i}+v_{i} \in \Gamma(\mathcal{E}= \mathfrak{G} \oplus TM)$.
\begin{align*}
 dB(e_{1},e_{2},e_{3})= D_{v_{1}}B(e_{2},e_{3}) +B(F(v_{1},v_{2}),e_{3}) -B([\eta_{1},\eta_{2}],e_{3}) + cyclic(1,2,3).
\end{align*} 
\end{prop}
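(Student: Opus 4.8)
The plan is to expand both sides directly from Definition~\ref{DEFD1} together with the bracket identities of Propositions~\ref{p:Liealgprop} and~\ref{PROPBRA}, whereupon the asserted formula emerges after a cancellation governed by the torsion-freeness of the Levi-Civita connection on $TM$. Both sides of the claimed identity are tensorial (that is, $C^\infty(M)$-multilinear) in $e_1,e_2,e_3$ --- in particular $[\eta_1,\eta_2]$ on the right-hand side is the fiberwise bracket of Proposition~\ref{PROPBRA}(a), which is $C^\infty$-bilinear --- so it suffices to verify it on sections $e_i=\eta_i+v_i$. Writing out Definition~\ref{DEFD1} for $k=2$, using $\tau(e_i)=v_i$, and rearranging with the antisymmetry of $B$ gives
\[
dB(e_1,e_2,e_3)=\sum_{\mathrm{cyc}}\Big(v_1\big[B(e_2,e_3)\big]-B([e_1,e_2],e_3)\Big),
\]
the sum being over cyclic permutations of $(1,2,3)$.

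Next I would record the decomposition of the algebroid bracket of two sections: by $\mathbb{R}$-bilinearity, the definition (\ref{f:Liealgconn}) of $D$ on $\mathfrak{G}$, and Proposition~\ref{PROPBRA}(d),
\[
[e_1,e_2]=\big([\eta_1,\eta_2]+D_{v_1}\eta_2-D_{v_2}\eta_1-F(v_1,v_2)\big)\oplus[v_1,v_2]_{Lie},
\]
where the first bracket is the fiberwise bracket in $\mathfrak{G}$ and $[v_1,v_2]_{Lie}$ is the $TM$-component. Expanding $v_1[B(e_2,e_3)]=(D_{v_1}B)(e_2,e_3)+B(D_{v_1}e_2,e_3)+B(e_2,D_{v_1}e_3)$ via the Leibniz rule for the connection $D$ on $\mathcal{E}=\mathfrak{G}\oplus TM$ --- which is (\ref{f:Liealgconn}) on $\mathfrak{G}$ and the Levi-Civita connection $\nabla$ of $g$ on $TM$ --- and substituting the bracket decomposition into $B([e_1,e_2],e_3)$, I then sum cyclically and collect: the $(D_{v_i}B)$-terms give $\sum_{\mathrm{cyc}}D_{v_1}B(e_2,e_3)$; the $F$-terms give $\sum_{\mathrm{cyc}}B(F(v_1,v_2),e_3)$; the fiberwise-bracket terms give $-\sum_{\mathrm{cyc}}B([\eta_1,\eta_2],e_3)$; the $D_{v_i}\eta_j$-terms cancel against the $\mathfrak{G}$-parts of $B(D_{v_i}e_j,\,\cdot\,)$ and $B(\,\cdot\,,D_{v_i}e_j)$ after relabelling the cyclic sum and using the antisymmetry of $B$; and the remaining $TM$-parts assemble, together with the $[v_i,v_j]_{Lie}$-terms, into $\sum_{\mathrm{cyc}}B\big(\nabla_{v_1}v_2-\nabla_{v_2}v_1-[v_1,v_2]_{Lie},\,e_3\big)$. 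Since $\nabla$ is torsion-free this last sum vanishes, leaving exactly the right-hand side of the proposition.

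There is no conceptual obstacle here; the difficulty is entirely bookkeeping. The substantive point is to split each auxiliary term $B(D_{v_i}e_j,\,\cdot\,)$ into its $\mathfrak{G}$- and $TM$-components, match the $\mathfrak{G}$-components against the $D_{v_i}\eta_j$-terms coming from expanding $[e_i,e_j]$, and recognize the leftover $TM$-components plus the $[v_i,v_j]_{Lie}$-terms as the torsion of $\nabla$. The place where care is genuinely needed is the sign tracking across the three cyclic relabellings, each of which may introduce a sign from the antisymmetry of $B$.
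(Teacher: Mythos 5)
Your proposal is correct and follows essentially the same route as the paper's proof: expand $dB$ from Definition \ref{DEFD1}, rewrite $v_1[B(e_2,e_3)]$ via the Leibniz rule for the extended connection $D = D \oplus \nabla^{L}$, decompose $[e_1,e_2]$ into its $[\eta_1,\eta_2]$, $D_{v_1}\eta_2 - D_{v_2}\eta_1$, $-F(v_1,v_2)$ and $[v_1,v_2]_{Lie}$ pieces, and cancel in the cyclic sum (the paper carries out the same cancellations term by term rather than packaging the $TM$-part as the vanishing torsion of $\nabla^{L}$, but the content is identical). The preliminary tensoriality remark is harmless but not needed, since every section of $\mathcal{E}$ already has the form $\eta + v$.
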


\begin{proof} In the following we will extend $D$ to a connection on $\mathcal{E}= \mathfrak{G} \oplus TM$ by setting $D=D \oplus \nabla^{L}$, where $\nabla^{L}$ is the Levi Civita connection associated with $g$. Using Definition \ref{DEFD1} and Proposition \ref{PROPBRA}, we have
\begin{align*}
dB(e_{1},e_{2},e_{3})&= v_{1}[B(e_{2},e_{3})] -B([e_{1},e_{2}],e_{3}) +cyclic(1,2,3)
                                  \\&=D_{v_{1}}B(e_{2},e_{3}) +B(D_{v_{1}}e_{2},e_{3}) +B(e_{2},D_{v_{1}}e_{3})
                                    \\ & \ \ \ -B([e_{1},e_{2}],e_{3})   +cyclic(1,2,3)    
       \\ &   =D_{v_{1}}B(e_{2},e_{3}) +B(\nabla^{L}_{v_{1}}v_{2},e_{3}) +B(e_{2}, \nabla^{L}_{v_{1}}v_{3})   
               \\ & \ \ \  +B(D_{v_{1}}\eta_{2},e_{3}) +B(e_{2},D_{v_{1}}\eta_{3})  
               -B([v_{1},v_{2}]_{Lie},e_{3})  +B(F(v_{1},v_{2}),e_{3})           
                  \\ & \ \ \    -B(D_{v_{1}}\eta_{2},e_{3}) +B(D_{v_{2}}\eta_{1},e_{3})
                      -B([\eta_{1},\eta_{2}],e_{3}) +cyclic(1,2,3)
        \\ &    = D_{v_{1}}B(e_{2},e_{3}) +B(F(v_{1},v_{2}),e_{3}) -B([\eta_{1},\eta_{2}],e_{3}) + cyclic(1,2,3).
    \end{align*}
\end{proof}

\subsection{Lie Algebroid Connection}
 Given the above structures on $\mathcal{E}$, we obtain a Lie algebroid connection $\nabla: \Gamma(\mathcal{E}) \longrightarrow \Gamma(\mathcal{E}^{*}\otimes \mathcal{E})$.  We define this and derive its action below.

\begin{defn}
Letting $e_{1}, e_{2}\in \Gamma(\mathcal{E})$, define $\nabla_{e_{1}} e_{2}$ to be the unique section of $\mathcal{E}$ that satisfies 
\[\widetilde{\nabla_{e_{1}} e_{2}}= \bar{\nabla}^{L}_{\tilde{e_{1}}}\tilde{e_{2}}, \]  where $\bar{\nabla}^{L}$ is the Levi Civita connection on $TP$ associated with $\bar{g}$.
\end{defn}

\begin{prop} \label{PROPNAB1} Let $e_{i} \in \Gamma(\mathcal{E})$ and $ f \in C^{\infty}(M)$.  Then
\begin{enumerate}
\item[1)] $\nabla_{e_{1}}e_{2} -  \nabla_{e_{2}}e_{1} = [e_{1},e_{2}]$.
\item[2)] $\nabla_{e_{1}}(fe_{2})=f \nabla_{e_{1}}e_{2} + \tau(e_{1})[f]e_{2}$.
\item[3)] $\tau(e_{1})[g_{_{\mathcal{E}}}(e_{2},e_{3})]= g_{_{\mathcal{E}}}(\nabla_{e_{1}}e_{2},e_{3}) + g_{_{\mathcal{E}}}(e_{2},\nabla_{e_{1}}e_{3})$.
\end{enumerate}
\end{prop}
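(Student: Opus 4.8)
The plan is to transport each of the three identities up to $P$ via the correspondence $e \mapsto \widetilde{e}$ and to deduce it from the corresponding property of the Levi-Civita connection $\bar{\nabla}^{L}$ of $(P,\bar{g})$. First I would check that the defining relation $\widetilde{\nabla_{e_{1}}e_{2}} = \bar{\nabla}^{L}_{\tilde{e_{1}}}\tilde{e_{2}}$ actually determines a section of $\mathcal{E}$: since $\bar{g}$ is $\mathcal{G}$-invariant, its Levi-Civita connection is $\mathcal{G}$-equivariant, so $\bar{\nabla}^{L}_{\tilde{e_{1}}}\tilde{e_{2}}$ is an invariant section of $TP$ whenever $\tilde{e_{1}},\tilde{e_{2}}$ are, and hence equals $\widetilde{u}$ for a unique $u=:\nabla_{e_{1}}e_{2}\in\Gamma(\mathcal{E})$. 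Along the way I would also record three elementary properties of the correspondence, to be used repeatedly: (i) it is $C^{\infty}(M)$-linear, $\widetilde{fe}=(\pi^{*}f)\,\tilde{e}$ for $f\in C^{\infty}(M)$; (ii) $\pi_{*}\tilde{e}=\tau(e)$, and therefore $\tilde{e}[\pi^{*}f]=\pi^{*}\bigl(\tau(e)[f]\bigr)$; and (iii) $\bar{g}(\tilde{e_{2}},\tilde{e_{3}})=\pi^{*}\bigl(g_{_{\mathcal{E}}}(e_{2},e_{3})\bigr)$, which is just the statement that $g_{_{\mathcal{E}}}$ is the reduction of $\bar{g}$ under the identification of invariant tensors. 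Finally, since $\pi$ is a submersion, $\pi^{*}$ is injective on functions and $e\mapsto\tilde{e}$ is injective on invariant sections, so every identity on $M$ may be verified after pulling back.

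For part (1) I would compute $\widetilde{\nabla_{e_{1}}e_{2}-\nabla_{e_{2}}e_{1}} = \bar{\nabla}^{L}_{\tilde{e_{1}}}\tilde{e_{2}}-\bar{\nabla}^{L}_{\tilde{e_{2}}}\tilde{e_{1}} = [\tilde{e_{1}},\tilde{e_{2}}]_{Lie} = \widetilde{[e_{1},e_{2}]}$, using torsion-freeness of $\bar{\nabla}^{L}$ and the very definition of the algebroid bracket, and then conclude by injectivity of $\widetilde{\cdot}$. For part (2) I would expand, using (i) and the Leibniz rule for $\bar{\nabla}^{L}$, $\widetilde{\nabla_{e_{1}}(fe_{2})} = \bar{\nabla}^{L}_{\tilde{e_{1}}}\bigl((\pi^{*}f)\tilde{e_{2}}\bigr) = (\pi^{*}f)\,\bar{\nabla}^{L}_{\tilde{e_{1}}}\tilde{e_{2}} + \tilde{e_{1}}[\pi^{*}f]\,\tilde{e_{2}}$, and then rewrite the two terms via (i) and (ii) as $\widetilde{f\nabla_{e_{1}}e_{2}} + \widetilde{\tau(e_{1})[f]\,e_{2}}$, again finishing by injectivity.

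For part (3) I would pull both sides back to $P$: by (ii) and (iii), $\pi^{*}\bigl(\tau(e_{1})[g_{_{\mathcal{E}}}(e_{2},e_{3})]\bigr) = \tilde{e_{1}}\bigl[\bar{g}(\tilde{e_{2}},\tilde{e_{3}})\bigr]$, which by metric compatibility of $\bar{\nabla}^{L}$ equals $\bar{g}(\bar{\nabla}^{L}_{\tilde{e_{1}}}\tilde{e_{2}},\tilde{e_{3}}) + \bar{g}(\tilde{e_{2}},\bar{\nabla}^{L}_{\tilde{e_{1}}}\tilde{e_{3}}) = \pi^{*}\bigl(g_{_{\mathcal{E}}}(\nabla_{e_{1}}e_{2},e_{3})+g_{_{\mathcal{E}}}(e_{2},\nabla_{e_{1}}e_{3})\bigr)$ using (iii) once more and the definition of $\nabla$; injectivity of $\pi^{*}$ gives the identity on $M$. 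The only point that is not purely formal is the well-definedness step — that $\bar{\nabla}^{L}$ maps invariant vector fields to invariant vector fields, and correspondingly that $g_{_{\mathcal{E}}}$ really is the reduced metric — but this follows from $\mathcal{G}$-equivariance of the Levi-Civita connection of an invariant metric, so I do not anticipate a genuine obstacle; each of the three verifications should take only a couple of lines.
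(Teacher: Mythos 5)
Your proof is correct, and it is exactly the argument the paper leaves implicit (the proposition is stated without proof): torsion-freeness, the Leibniz rule, and metric compatibility of $\bar{\nabla}^{L}$ transfer to $\nabla$ through the invariant-section correspondence, with the equivariance of $\bar{\nabla}^{L}$ guaranteeing well-definedness. Nothing further is needed.
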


Furthermore, we have the following Kozul formula for $\nabla$.
\begin{prop} \label{PROPNABKOZ} For $e_{i} \in \Gamma(\mathcal{E})$,
\begin{align*}
2g_{_{\mathcal{E}}}(\nabla_{e_{1}}e_{2}, e_{3})= \ & \tau(e_{1})[g_{_{\mathcal{E}}}(e_{2},e_{3})] +  \tau(e_{2})[g_{_{\mathcal{E}}}
(e_{1},e_{3})]-  \tau(e_{3})[g_{_{\mathcal{E}}}(e_{1},e_{2})] \\&+ g_{_{\mathcal{E}}}([e_{1},e_{2}],e_{3}) +g_{_{\mathcal{E}}}([e_{3},e_{1}],e_{2}) +g_{_{\mathcal{E}}}([e_{3},e_{2}],e_{1}).
\end{align*}
\end{prop}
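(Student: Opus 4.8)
\emph{Proof proposal.} The cleanest route is to pull everything back to $P$ and invoke the classical Koszul formula for the Levi-Civita connection $\bar{\nabla}^{L}$ of $\bar{g}$. Recall that the assignment $e \mapsto \tilde{e} = \delta^{-1}\pi^{*}e$ identifies $\Gamma(\mathcal{E})$ with invariant vector fields on $P$; it sends the algebroid bracket to the Lie bracket, $\widetilde{[e_{i},e_{j}]} = [\tilde{e_{i}},\tilde{e_{j}}]_{Lie}$, it sends $\nabla_{e_{1}}e_{2}$ to $\bar{\nabla}^{L}_{\tilde{e_{1}}}\tilde{e_{2}}$ by definition, and it satisfies $\bar{g}(\tilde{e_{i}},\tilde{e_{j}}) = \pi^{*}\bigl(g_{_{\mathcal{E}}}(e_{i},e_{j})\bigr)$ together with $\tilde{e_{i}}[\pi^{*}f] = \pi^{*}\bigl(\tau(e_{i})[f]\bigr)$ for $f \in C^{\infty}(M)$, the latter because $\tilde{e_{i}}$ is $\pi$-related to the vector field $\tau(e_{i})$ on $M$.

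First I would apply the ordinary Koszul formula on $P$ to the three invariant vector fields $\tilde{e_{1}}, \tilde{e_{2}}, \tilde{e_{3}}$, which expresses $2\bar{g}(\bar{\nabla}^{L}_{\tilde{e_{1}}}\tilde{e_{2}}, \tilde{e_{3}})$ as the usual alternating sum of three derivative terms and three bracket terms. Each term of that identity is the $\pi^{*}$ of the corresponding term of the claimed formula: the derivative terms by the two compatibility relations above, the bracket terms by $\widetilde{[e_{i},e_{j}]}=[\tilde{e_{i}},\tilde{e_{j}}]_{Lie}$ combined with $\bar{g}(\tilde{a},\tilde{b})=\pi^{*}(g_{_{\mathcal{E}}}(a,b))$, and the left-hand side because $\widetilde{\nabla_{e_{1}}e_{2}}=\bar{\nabla}^{L}_{\tilde{e_{1}}}\tilde{e_{2}}$. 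Since $\pi\colon P \to M$ is a surjective submersion, $\pi^{*}\colon C^{\infty}(M) \to C^{\infty}(P)$ is injective, so the identity descends to $M$, which is precisely the assertion.

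Alternatively, one can argue intrinsically on $\mathcal{E}$, imitating the standard proof: expand $\tau(e_{1})[g_{_{\mathcal{E}}}(e_{2},e_{3})] + \tau(e_{2})[g_{_{\mathcal{E}}}(e_{1},e_{3})] - \tau(e_{3})[g_{_{\mathcal{E}}}(e_{1},e_{2})]$ using the metric-compatibility property (3) of Proposition \ref{PROPNAB1} to produce six terms involving $\nabla$, and then use the symmetry property (1), $\nabla_{e_{i}}e_{j} - \nabla_{e_{j}}e_{i} = [e_{i},e_{j}]$, to recombine them; four of the six terms assemble into the three bracket terms on the right, and the other two add to $2g_{_{\mathcal{E}}}(\nabla_{e_{1}}e_{2},e_{3})$. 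Nondegeneracy of $g_{_{\mathcal{E}}} = G \oplus g$ is what makes this formula a genuine characterization of $\nabla$, though it is not needed for the stated identity itself.

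I do not anticipate a real obstacle here; the only points requiring attention are bookkeeping ones — verifying that every function appearing on the right-hand side is a pullback from $M$ so that the descent argument is legitimate, and confirming that the tilde correspondence really does intertwine $\tau(e)[\cdot]$ with $\tilde{e}[\pi^{*}(\cdot)]$. Both follow directly from the constructions in \S\ref{SECTDC}.
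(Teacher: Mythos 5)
Your argument is correct: both routes work, and since $\pi_*\tilde{e}=\tau(e)$ and the tilde correspondence intertwines $g_{_{\mathcal{E}}}$, $[\cdot,\cdot]$, and $\nabla$ with $\bar{g}$, $[\cdot,\cdot]_{Lie}$, and $\bar{\nabla}^{L}$ on invariant fields, the classical Koszul formula on $P$ descends verbatim; the intrinsic derivation from properties (1) and (3) of Proposition \ref{PROPNAB1} gives the same identity. The paper states this proposition without proof, treating it as the standard consequence of Proposition \ref{PROPNAB1}, which is exactly your second route, so there is nothing to add.
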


We can then decompose $\nabla$ as follows:
\begin{prop} \label{PROPNABDEC} Let $\eta_{i} \in \Gamma(\mathfrak{g})$ and $v_{a} \in \Gamma(TM)$.  Then
\begin{enumerate}
\item[1)] $\nabla_{v_{1}}v_{2}=\nabla^{L}_{v_{1}}v_{2} -\tfrac{1}{2}F(v_{1},v_{2})$.
\item[2)] $\nabla_{v}\eta= D_{v}\eta +\tfrac{1}{2}G^{-1}D_{v}G(\eta, *) +\tfrac{1}{2}g^{-1}G(F(v,*), \eta)$.
\item[3)] $\nabla_{\eta}v= \tfrac{1}{2}G^{-1}D_{v}G(\eta,*)+\tfrac{1}{2}g^{-1}G(F(v,*),\eta)$.
\item[4)] $\nabla_{\eta_{1}}\eta_{2}= -\tfrac{1}{2}g^{-1}D_{*}G(\eta_{1},\eta_{2})+ \tfrac{1}{2}[\eta_{1},\eta_{2}] +\tfrac{1}{2}G^{-1}G([*,\eta_{1}],\eta_{2})+\tfrac{1}{2}G^{-1}G([*,\eta_{2}],\eta_{1})$.
\end{enumerate}
\end{prop}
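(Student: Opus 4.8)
The plan is to compute all four components of $\nabla_{e_1}e_2$ directly from the Koszul-type formula in Proposition \ref{PROPNABKOZ}, using the explicit description of the algebroid bracket from Proposition \ref{PROPBRA} together with the block decomposition $g_{_{\mathcal{E}}} = G \oplus g$ and the fact that $\tau(\eta) = 0$ for $\eta \in \Gamma(\mathfrak{g})$ while $\tau(v) = v$ for $v \in \Gamma(TM)$. The four cases correspond to the four choices of which of the two inputs lies in $\mathfrak{g}$ versus $TM$; for each we test against both an arbitrary section of $\mathfrak{g}$ and an arbitrary section of $TM$ to extract the $\mathfrak{g}$- and $TM$-components of $\nabla_{e_1}e_2$ separately. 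Throughout I will use Proposition \ref{PROPBRA}(b),(c),(d) to rewrite brackets: $[\eta_1,\eta_2]$ is the fiberwise Lie bracket (valued in $\mathfrak{g}$), $[v,w] = [v,w]_{Lie} - F(v,w)$ (with $F(v,w) \in \mathfrak{g}$), and $[v,\eta] = D_v\eta$ is the connection from (\ref{f:Liealgconn}) (valued in $\mathfrak{g}$).

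First I would handle case 1), $\nabla_{v_1}v_2$. Pairing $2g_{_{\mathcal{E}}}(\nabla_{v_1}v_2,\cdot)$ against $v_3 \in \Gamma(TM)$, every term in the Koszul formula involving $G$ drops because $g_{_{\mathcal{E}}}(v_i,v_j) = g(v_i,v_j)$ and the $\mathfrak{g}$-valued parts of the brackets pair trivially with $v_3$; what survives is exactly the Koszul formula for the Levi-Civita connection $\nabla^L$, together with the terms $g_{_{\mathcal{E}}}([v_3,v_1],v_2) + g_{_{\mathcal{E}}}([v_3,v_2],v_1)$ whose $F$-parts vanish against $TM$, leaving the $[\cdot,\cdot]_{Lie}$ parts. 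Pairing instead against $\eta_3 \in \Gamma(\mathfrak{g})$: the $\tau$-derivative terms all have $\tau(v_i)$ hitting $g_{_{\mathcal{E}}}(v_j,\eta_3) = 0$ or $g_{_{\mathcal{E}}}(v_1,v_2)$ paired against nothing, so they vanish; the bracket terms give $g_{_{\mathcal{E}}}([v_1,v_2],\eta_3) + g_{_{\mathcal{E}}}([\eta_3,v_1],v_2) + g_{_{\mathcal{E}}}([\eta_3,v_2],v_1)$; using $[v_1,v_2] = [v_1,v_2]_{Lie} - F(v_1,v_2)$ the first gives $-G(F(v_1,v_2),\eta_3)$, and the last two, being $-D_{v_1}\eta_3$ and $-D_{v_2}\eta_3$ paired against $v_2,v_1 \in TM$, vanish. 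Hence the $\mathfrak{g}$-component is $-\tfrac12 F(v_1,v_2)$, giving formula 1). Cases 2), 3), 4) proceed the same way: in each, one must carefully track which bracket contributes its $D$, $F$, $[\cdot,\cdot]_{\mathfrak g}$, or $[\cdot,\cdot]_{Lie}$ piece, and convert $\tau(v)[G(\eta,\eta')]$ into $(D_v G)(\eta,\eta') + G(D_v\eta,\eta') + G(\eta,D_v\eta')$ using metric compatibility of $D$ from Proposition \ref{PROPBRA}(c), so that only the $D_v G$ term survives after cancellation against the matching bracket terms.

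The main obstacle will be case 4), $\nabla_{\eta_1}\eta_2$, which is the only one producing all three types of term ($g$-valued derivative of $G$, the fiber bracket $[\eta_1,\eta_2]$, and the two $G^{-1}G([*,\eta_i],\eta_j)$ correction terms). Pairing against $v_3 \in TM$: the only surviving $\tau$-term is $-\tau(v_3)[g_{_{\mathcal{E}}}(\eta_1,\eta_2)] = -(D_{v_3}G)(\eta_1,\eta_2) - G(D_{v_3}\eta_1,\eta_2) - G(\eta_1,D_{v_3}\eta_2)$, while the bracket terms $g_{_{\mathcal{E}}}([v_3,\eta_1],\eta_2) + g_{_{\mathcal{E}}}([v_3,\eta_2],\eta_1) = G(D_{v_3}\eta_1,\eta_2) + G(D_{v_3}\eta_2,\eta_1)$ cancel the last two, leaving $-(D_{v_3}G)(\eta_1,\eta_2)$, i.e. the $-\tfrac12 g^{-1}D_* G(\eta_1,\eta_2)$ term. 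Pairing against $\eta_3 \in \mathfrak g$: all $\tau$-terms die (no $TM$ inputs), and one is left with $g_{_{\mathcal E}}([\eta_1,\eta_2],\eta_3) + g_{_{\mathcal E}}([\eta_3,\eta_1],\eta_2) + g_{_{\mathcal E}}([\eta_3,\eta_2],\eta_1) = -G([\eta_1,\eta_2],\eta_3) + G([\eta_1,\eta_3],\eta_2) + G([\eta_2,\eta_3],\eta_1)$ after applying Proposition \ref{PROPBRA}(b) and antisymmetry of the fiber bracket; a sign bookkeeping check then matches this to the $\mathfrak g$-component $\tfrac12[\eta_1,\eta_2] + \tfrac12 G^{-1}G([*,\eta_1],\eta_2) + \tfrac12 G^{-1}G([*,\eta_2],\eta_1)$. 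Cases 2) and 3) are intermediate in difficulty and are in fact forced: once 1) and 4) are known, 2) follows from 3) via Proposition \ref{PROPNAB1}(1), $\nabla_v\eta - \nabla_\eta v = [v,\eta] = D_v\eta$, so it suffices to verify 3) by the same Koszul bookkeeping (pairing against $v_3$ yields the $\tfrac12 g^{-1}G(F(v,*),\eta)$ term from $[v_3,\eta]$'s contribution interacting with $[v_3,v]=[v_3,v]_{Lie}-F(v_3,v)$, and pairing against $\eta_3$ yields the $\tfrac12 G^{-1}D_v G(\eta,*)$ term). I would present 1), then 3), then obtain 2) by the symmetry identity, then 4), flagging the sign conventions in Proposition \ref{PROPBRA}(b) as the point requiring the most care.
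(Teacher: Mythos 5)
Your proposal is correct and takes essentially the same route as the paper: both extract the $\mathfrak g$- and $TM$-components from the Koszul formula of Proposition \ref{PROPNABKOZ} using the bracket identities of Proposition \ref{PROPBRA}, the only cosmetic difference being that you verify 3) directly and then obtain 2) from the torsion identity of Proposition \ref{PROPNAB1}, whereas the paper computes 2) (using metric compatibility together with part 1) for its $TM$-component) and then deduces 3). Your detour through Proposition \ref{PROPBRA}(b) in case 4) is unnecessary, since the Koszul output already matches the stated formula when read directly in terms of the fiberwise algebroid bracket, but your sign bookkeeping does close correctly.
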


\begin{proof} 
First we compute, using Proposition \ref{PROPNABKOZ},
\begin{align*}
 2g(\pi_{TM}\nabla_{v_{1}}v_{2},v_{3}) 
&= 2g_{_{\mathcal{E}}}(\nabla_{v_{1}}v_{2},v_{3}) \\&=  v_{1}[g(v_{2},v_{3})] + v_{2}[g(v_{1},v_{3})] - v_{3}[g(v_{1},v_{2})] 
 +g_{_{\mathcal{E}}}([v_{1},v_{2}],v_{3}) \\& \ \ \ + g_{_{\mathcal{E}}}([v_{3},v_{1}],v_{2}) + g_{_{\mathcal{E}}}([v_{3},v_{2}],v_{1}).
\end{align*}
Since $g_{_{\mathcal{E}}}([v_{i},v_{j}],v_{k})= g([v_{i},v_{j}]_{Lie},v_{k})$, the right hand side equals $2g(\nabla^{L}_{v_{1}}v_{2},v_{3})$. Hence $\pi_{TM}\nabla_{v_{1}}v_{2}= \nabla^{L}_{v_{1}}v_{2}.$  Furthermore, 
\begin{align*}
2G(\pi_{\mathfrak{g}}\nabla_{v_{1}}v_{2},\eta_{3})&= 2g_{_{\mathcal{E}}}(\nabla_{v_{1}}v_{2},\eta_{3}) \\&= v_{1}[g_{_{\mathcal{E}}}(v_{2},\eta_{3})] +v_{2}[g_{_{\mathcal{E}}}(v_{1},\eta_{3})] + \\& \ \ \ \ g_{_{\mathcal{E}}}([v_{1},v_{2}],\eta_{3}) + g_{_{\mathcal{E}}}([\eta_{3},v_{1}],v_{2})+ g_{_{\mathcal{E}}}([\eta_{3},v_{2}],v_{1}) \\&=  g_{_{\mathcal{E}}}([v_{1},v_{2}],\eta_{3}). 
\end{align*}
Using Proposition \ref{PROPBRA}, the right hand side equals $-G(F(v_{1},v_{2}),\eta_{3})$. Hence $\pi_{\mathfrak{g}}\nabla_{v_{1}}v_{2}=-\tfrac{1}{2}F(v_{1},v_{2})$, and part (1) follows.

For part (2), we first compute
\begin{align*}
g(\pi_{TM}\nabla_{v_{1}}\eta_{2},v_{3}) &= g_{_{\mathcal{E}}}(\nabla_{v_{1}}\eta_{2},v_{3}) \\& =-g_{_{\mathcal{E}}}(\eta_{2},\nabla_{v_{1}}v_{3})
                     								= \tfrac{1}{2}g_{_{\mathcal{E}}}(F(v_{1},v_{3}), \eta_{2}),						                   
\end{align*} 
which implies that $\pi_{TM}\nabla_{v_{1}}\eta_{2}= \tfrac{1}{2}g^{-1}G(F(v_{1},*),\eta_{2})$.   Furthermore
\begin{align*}
2G(\pi_{\mathfrak{g}}\nabla_{v_{1}}\eta_{2},\eta_{3})&= 2g_{_{\mathcal{E}}}(\nabla_{v_{1}}\eta_{2},\eta_{3}) \\ &= v_{1}[g_{_{\mathcal{E}}}(\eta_{2},\eta_{3})]+ g_{_{\mathcal{E}}}([v_{1},\eta_{2}],\eta_{3}) +g_{_{\mathcal{E}}}([\eta_{3},v_{1}],\eta_{2})
\\& =v_{1}[G(\eta_{2},\eta_{3})] + G(D_{v_{1}}\eta_{2},\eta_{3})-G(\eta_{2},D_{v_{1}}\eta_{3})
\\& =D_{v_{1}}G(\eta_{2},\eta_{3}) +2G(D_{v_{1}}\eta_{2},\eta_{3}).
\end{align*}
This implies $\pi_{\mathfrak{g}}\nabla_{v_{1}}\eta_{2} = \tfrac{1}{2}G^{-1}(D_{v_{1}}G)(\eta_{2},*) +D_{v_{1}}\eta_{2},$ finishing part (2).

For part (3), using Proposition \ref{PROPNAB1} we compute
\begin{align*}
\nabla_{\eta}v &=\nabla_{v}\eta -D_{v}\eta
              \\&= \tfrac{1}{2}G^{-1}(D_{v}G)(\eta,*) +\tfrac{1}{2} g^{-1}G(F(v,*), \eta),
\end{align*}
as required.  For part (4) we first compute
\begin{align*}
2g(\pi_{TM}\nabla_{\eta_{1}}\eta_{2},v_{3})&= 2g_{_{\mathcal{E}}}(\nabla_{\eta_{1}}\eta_{2},v_{3}) 
     \\&=-v_{3}[g_{_{\mathcal{E}}}(\eta_{1},\eta_{2})]+ g_{_{\mathcal{E}}}([v_{3},\eta_{1}],\eta_{2}) +g_{_{\mathcal{E}}}([v_{3},\eta_{2}],\eta_{1}) 
     \\&=-v_{3}[G(\eta_{1},\eta_{2})] +G(D_{v_{3}}\eta_{1},\eta_{2}) +G(D_{v_{3}}\eta_{2},\eta_{1})
     \\&=-DG_{v_{3}}(\eta_{1},\eta_{2}).
   \end{align*}  
This implies $\pi_{TM}\nabla_{\eta_{1}}\eta_{2}=-\tfrac{1}{2}g^{-1}DG_{*}(\eta_{1},\eta_{2})$.   Furthermore
\begin{align*}
2G(\pi_{\mathfrak{g}}\nabla_{\eta_{1}}\eta_{2},\eta_{3})
&= 2g_{_{\mathcal{E}}}(\nabla_{\eta_{1}}\eta_{2},\eta_{3}) 
\\& =g_{_{\mathcal{E}}}([\eta_{1},\eta_{2}],\eta_{3}) + g_{_{\mathcal{E}}}([\eta_{3},\eta_{1}],\eta_{2})+g_{_{\mathcal{E}}}([\eta_{3},\eta_{2}],\eta_{1}).
\\&= G([\eta_{1},\eta_{2}],\eta_{3}) + G([\eta_{3},\eta_{1}],\eta_{2})+G([\eta_{3},\eta_{2}],\eta_{1}).
\end{align*}
This shows that $\pi_{\mathfrak{g}}\nabla_{\eta_{1}}\eta_{2}=\tfrac{1}{2}[\eta_{1},\eta_{2}] +\tfrac{1}{2}G^{-1}G([*,\eta_{1}],\eta_{2}) +\tfrac{1}{2}G^{-1}G([*,\eta_{2}],\eta_{1}),$ finishing the proof of part (4).
\end{proof}

\subsection{Curvature and Torsion Tensors on Principal Bundles} \label{SECCURV}

Here we compute the components of the curvature tensors of the Lie algebroid connection $\nabla$. First, we define $R^{\nabla} \in \Gamma(\wedge^{2}\mathcal{E}^{*} \otimes End\mathcal{E})$ via
\begin{gather} \label{f:curvdef}
R^{\nabla}(e_{1},e_{2})e_{3}=\nabla_{e_{1}}\nabla_{e_{2}}e_{3} - \nabla_{e_{2}}\nabla_{e_{1}}e_{3}- \nabla_{[e_{1},e_{2}]}e_{3},
\end{gather}
for $e_{i} \in \Gamma(\mathcal{E})$.  Then define $R^{\nabla}(e_{1},e_{2},e_{3},e_{4})= g_{_{\mathcal{E}}}(R^{\nabla}(e_{1},e_{2})e_{3},e_{4})$. The Ricci and scalar curvatures of $\nabla$, $Ric^{\nabla}$ and $Scal^{\nabla}$, are defined using the appropriate traces over $g_{_{\mathcal{E}}}$.  Here and below we will adopt an abstract notation for taking traces of tensor quantities.  For instance, we set
\begin{align*}
D_{\cdot_{3}}G(\cdot_{1},\cdot_{2})D_{\cdot_{3}}G(\cdot_{1},\cdot_{2}) := \tr_{G}^{(1,2)}\tr_{g}^{3}D_{\cdot_{3}}G(\cdot_{1},\cdot_{2})D_{\cdot_{3}}G(\cdot_{1},\cdot_{2}) = G^{ij} G^{kl} g^{\ga \gb} D_{\ga} G_{ik} D_{\gb} G_{jl}.
\end{align*}
Note that we will always trace over $\cdot$'s while we will use *'s to denote the different components of tensors.  For clarity we will at times include the traces in the equations.

\begin{thm} \label{t:curvature} Let $\eta_{i} \in \Gamma(\mathfrak{g})$ and $v_{a} \in \Gamma(TM)$.
\begin{align*}
1)&\ R^{\nabla}(\eta_{1},\eta_{2},\eta_{3},\eta_{4})= -\tfrac{1}{4}D_{\cdot}G(\eta_{1},\eta_{4})D_{\cdot}G(\eta_{2},\eta_{3})
  -\tfrac{1}{4}G([\eta_{1},[\eta_{2},\eta_{3}]],\eta_{4}) \\ & -\tfrac{1}{4}G([\eta_{1},[\eta_{4},\eta_{3}]],\eta_{2})  -\tfrac{1}{4}G([\eta_{1},\eta_{3}],[\eta_{4},\eta_{2}])-\tfrac{1}{4}\tr_{G}G([\eta_{1},\cdot], \eta_{4})G([\eta_{2},\cdot], \eta_{3}) \\ & -\tfrac{1}{4}\tr_{G}G([\eta_{4},\cdot], \eta_{1})G([\eta_{2},\cdot], \eta_{3})  +(2 \leftrightarrow 3 \text{ on last four terms}) \\ & -(1 \leftrightarrow 2 \text{ on all terms})\\
2)&\ R^{\nabla}(\eta_{1},\eta_{2},v_{3},\eta_{4})= \tfrac{1}{4}D_{\cdot}G(\eta_{1},\eta_{4})G(F(v_{3},\cdot),\eta_{2})+
\tfrac{1}{4}\tr_{G}D_{v_{3}}G(\eta_{2},\cdot)G([\eta_{1},\cdot],\eta_{4}) \\ &+ \tfrac{1}{4}\tr_{G}D_{v_{3}}G(\eta_{2},\cdot)G([\eta_{4},\cdot],\eta_{1})  -\tfrac{1}{4}D_{v_{3}}G([\eta_{1},\eta_{2}],\eta_{4}) \\&  -\tfrac{1}{4}D_{v_{3}}G([\eta_{1},\eta_{4}],\eta_{2})-(1\leftrightarrow 2),\\
3)&\ R^{\nabla}(\eta_{1},v_{2},v_{3},\eta_{4})= -\tfrac{1}{2}(D_{v_{2}}DG)_{v_{3}}(\eta_{1},\eta_{4}) +\tfrac{1}{4}D_{v_{3}}G(\eta_{1},\cdot)D_{v_{2}}G(\eta_{4},\cdot)\\ &  +\tfrac{1}{4}G(F(v_{3},\cdot),\eta_{1})G(F(v_{2},\cdot),\eta_{4})-\tfrac{1}{4}G([\eta_{1},F(v_{2},v_{3})],\eta_{4}) \\& -\tfrac{1}{4}G([\eta_{4},F(v_{2},v_{3})],\eta_{1})  +\tfrac{1}{4}G([\eta_{1},\eta_{4}], F(v_{2},v_{3})),\\
4)&\ R^{\nabla}(\eta_{1},v_{2},v_{3},v_{4})= \tfrac{1}{4}D_{v_{4}}G(\eta_{1},F(v_{2},v_{3})) -\tfrac{1}{4}D_{v_{3}}G(\eta_{1},F(v_{2},v_{4}))-\tfrac{1}{2}D_{v_{2}}G(\eta_{1},F(v_{3},v_{4})) \\ & -\tfrac{1}{2}G(\eta_{1}, D_{v_{2}}F(v_{3},v_{4})),\\
5)&\ R^{\nabla}(v_{1},v_{2},v_{3},v_{4})= R^{\nabla^{L}}(v_{1},v_{2},v_{3},v_{4})+\tfrac{1}{2}G(F(v_{1},v_{2}), F(v_{3},v_{4}))-\tfrac{1}{4}G(F(v_{1},v_{4}), F(v_{2},v_{3})) \\ &+\tfrac{1}{4}G(F(v_{1},v_{3}), F(v_{2},v_{4})).
\end{align*}
\end{thm}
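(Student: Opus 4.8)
The plan is to compute each component of $R^{\nabla}$ directly from the definition (\ref{f:curvdef}) by substituting the explicit decomposition of $\nabla$ from Proposition \ref{PROPNABDEC} and the bracket formulas of Proposition \ref{PROPBRA}. Since $\nabla$ is the Levi-Civita connection of $\bar g$ transported to $\mathcal{E}$, the curvature $R^{\nabla}$ is the Riemann tensor of $\bar g$ written in the adapted frame, so the output is essentially O'Neill-type submersion formulas generalized to allow a varying fiber metric $G$ and a nontrivial structure-algebra bracket. I would organize the work around the five cases by number of $\mathfrak{g}$-arguments (four, three, two, one, zero), since the difficulty scales with that count.

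For case $5)$ (all $TM$-arguments) I would use Proposition \ref{PROPNABDEC}(1): $\nabla_{v_1} v_2 = \nabla^L_{v_1} v_2 - \tfrac12 F(v_1,v_2)$. Expanding $\nabla_{v_1}\nabla_{v_2}v_3$ produces the $\nabla^L$ curvature plus cross terms where one $F$ meets a $\nabla_\eta v$ term, handled by (3); pairing with $v_4$ kills the pure-fiber parts and leaves the three $G(F,F)$ terms after collecting the $\tfrac12$'s and using the first Bianchi identity for $\nabla^L$ to symmetrize. Case $4)$ is similar: feed $\nabla_{v_2} v_3$, $\nabla_{v_3}\eta_1$, etc.\ into the definition, keep only terms linear in $\eta_1$ and landing in the $TM$ slot, and recognize $D_{v_2}F$ as the piece coming from differentiating $F$ itself (using $D$ extended as $D\oplus\nabla^L$ as in the proof of Proposition \ref{PROPDB1}). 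For cases $2)$ and $3)$ I would similarly track which terms survive tracing against $G$ versus $g$, using part 3) of Proposition \ref{PROPNABDEC} for $\nabla_\eta v$ and part 4) for $\nabla_{\eta_1}\eta_2$. Throughout I would use the compatibility identity $\tau(e_1)[g_{_{\mathcal{E}}}(e_2,e_3)] = g_{_{\mathcal{E}}}(\nabla_{e_1}e_2,e_3)+g_{_{\mathcal{E}}}(e_2,\nabla_{e_1}e_3)$ (Proposition \ref{PROPNAB1}(3)) to convert derivatives of $G$ back and forth, and the algebroid Bianchi/antisymmetry relations to package terms into the stated $(2\leftrightarrow 3)$ and $(1\leftrightarrow 2)$ symmetrized forms.

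Case $1)$ (all four arguments in $\mathfrak{g}$) is the heaviest and the main obstacle. Here $\nabla_{[\eta_1,\eta_2]}$ is nonzero and itself decomposes via Proposition \ref{PROPNABDEC}(4), and each of $\nabla_{\eta_1}(\nabla_{\eta_2}\eta_3)$ expands into a $TM$-valued inner part (the $-\tfrac12 g^{-1}D_*G$ piece) times a $\nabla_v\eta$ term, plus the $\mathfrak{g}$-valued bracket pieces. The bookkeeping is delicate because the $[\eta_i,\eta_j]$ terms recombine, via the Jacobi identity, into the iterated brackets $[\eta_1,[\eta_2,\eta_3]]$ appearing in the statement, and one must also track the $\ad$-invariance-defect terms $G([*,\eta_i],\eta_j)$ carefully so that the trace terms $\tr_G G([\eta_1,\cdot],\eta_4)G([\eta_2,\cdot],\eta_3)$ emerge with the correct coefficients. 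The strategy is to split $\nabla_{\eta_i}\eta_j = (\text{fiber part}) + (\text{base part})$ at the outset, expand all nine products plus the bracket term, and group by the $TM$/$\mathfrak{g}$ type of intermediate vectors before pairing with $\eta_4$; the pure symmetry structure of the final expression — antisymmetry in $(1\leftrightarrow 2)$, the $(2\leftrightarrow 3)$ on the last four terms — should serve as a consistency check that no term is dropped.
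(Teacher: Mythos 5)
Your plan is correct and is exactly the computation the paper implicitly relies on: Theorem \ref{t:curvature} is stated without proof, and the intended argument is precisely the direct expansion of the definition (\ref{f:curvdef}) using the decomposition of $\nabla$ in Proposition \ref{PROPNABDEC} and the bracket identities of Proposition \ref{PROPBRA}, organized case by case as you describe (your appeal to the first Bianchi identity in case 5 is unnecessary --- the three $G(F,F)$ terms fall out directly from the $\mathfrak{G}$-components of $\nabla_{v_i}v_j$ and of $[v_1,v_2]$ --- but this is harmless). The only caveat is that the proposal is an outline rather than an executed computation, so the specific coefficients and signs in the five formulas are not actually verified by what you have written.
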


\begin{lemma} \label{l:nilpotent} Assuming $\mathcal{G}$ is nilpotent one has
\begin{align*}
0 =&\ \tr_{G}G([\eta, \cdot],\cdot),\\
0 =&\ \tr_{G}G([\eta_{1},[\eta_{2},\cdot]],\cdot),
\end{align*}
for all $\eta_{i} \in\mathfrak{G}$.
\begin{proof} Let $\eta_{i} \in\mathfrak{G}|_{m}$, for $m \in M$. Since $\mathcal{G}$ is nilpotent, $[\eta, *]$ and $[\eta_{1},[\eta_{2},*]] \in End(\mathfrak{G}|_{m})$ are traceless.
\end{proof}
\end{lemma}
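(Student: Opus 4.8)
The plan is to reduce both identities, which are pointwise statements on each fiber $\mathfrak{G}|_{m}$, to the elementary fact that a nilpotent endomorphism of a finite-dimensional vector space has vanishing trace. Fix $m \in M$ and work on $V := \mathfrak{G}|_{m}$ with the positive-definite inner product $G := G|_{m}$ and the Lie bracket obtained by restriction, which is a genuine Lie bracket by Proposition \ref{PROPBRA}(a).

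First I would record that for any $A \in \End(V)$ the metric trace of the bilinear form $(\xi,\zeta) \mapsto G(A\xi,\zeta)$ equals the ordinary trace of $A$: in a local frame, $\tr_{G} G(A\cdot,\cdot) = G^{ij} G_{kj} A^{k}{}_{i} = A^{i}{}_{i} = \tr A$, using that $G$ is symmetric. Taking $A = \ad_{\eta} := [\eta,\cdot]$ and then $A = \ad_{\eta_{1}} \circ \ad_{\eta_{2}} = [\eta_{1},[\eta_{2},\cdot]]$ converts the two claimed identities into $\tr_{V}(\ad_{\eta}) = 0$ and $\tr_{V}(\ad_{\eta_{1}} \ad_{\eta_{2}}) = 0$.

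Next I would invoke the nilpotency hypothesis. By Proposition \ref{PROPBRA}(b), $(V,[\cdot,\cdot])$ is anti-isomorphic to the Lie algebra $\mathfrak{g}$ of $\mathcal{G}$, hence isomorphic to it after negating the bracket, so $V$ is a nilpotent Lie algebra; thus there is an integer $c$ with $\ad_{x_{1}} \cdots \ad_{x_{c}} = 0$ on $V$ for all $x_{i}$. In particular $\ad_{\eta}$ is nilpotent, and since $\ad_{\eta_{1}} \ad_{\eta_{2}}$ sends the $k$-th term of the lower central series of $V$ into the $(k+2)$-th, a sufficiently high power of it also vanishes, so it is nilpotent too. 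Both operators therefore have all eigenvalues zero and hence zero trace, which is exactly the two stated identities. The argument is entirely pointwise and I foresee no real obstacle; the only points needing care are the bookkeeping that $\tr_{G}$ of these bilinear forms is an endomorphism trace and the observation that the fiberwise bracket is that of a nilpotent Lie algebra. I expect the author's proof to be essentially this, compressed to the single remark that $\ad$ of elements of a nilpotent Lie algebra are nilpotent endomorphisms and therefore traceless.
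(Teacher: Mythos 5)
Your proof is correct and is essentially the paper's argument, just with the details spelled out: the paper's one-line proof is exactly the compressed observation that $[\eta,\cdot]$ and $[\eta_{1},[\eta_{2},\cdot]]$ are nilpotent, hence traceless, endomorphisms of each fiber $\mathfrak{G}|_{m}$, and that $\tr_{G}$ of the associated bilinear forms is the endomorphism trace.
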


\begin{prop} \label{p:Ricci} Assuming $\mathcal{G}$ is nilpotent one has
\begin{align*}
Ric^{\nabla}(\eta_{1},\eta_{2})=&\ -\tfrac{1}{2}(D_{\cdot}DG)_{\cdot}(\eta_{1},\eta_{2}) -\tfrac{1}{4}D_{\cdot_{2}}G(\cdot_{1},\cdot_{1})D_{\cdot_{2}}G(\eta_{1},\eta_{2})+\tfrac{1}{2}D_{\cdot_{2}}G(\eta_{1},\cdot_{1})D_{\cdot_{2}}G(\cdot_{1},\eta_{2}) \\ & +\tfrac{1}{4}G(F(\cdot_{1},\cdot_{2}), \eta_{1})G(F(\cdot_{1},\cdot_{2}), \eta_{2})-\tfrac{1}{2}\tr_{G}G([\cdot,\eta_{1}],[\cdot,\eta_{2}]) \\ &+\tfrac{1}{4}\tr_{G}G([\cdot_{1},\cdot_{2}],\eta_{1})G([\cdot_{1},\cdot_{2}],\eta_{2}),\\
Ric^{\nabla}(\eta,v)=&\ \tfrac{1}{2}G(\eta,D_{\cdot}F(v,\cdot)) +\tfrac{1}{2}D_{\cdot}G(\eta,F(v,\cdot))+ \tfrac{1}{4}G(\eta, F(v,\cdot_{2}))D_{\cdot_{2}}G(\cdot_{1}\cdot_{1}) \\& -\tfrac{1}{2}\tr_{G}D_{v}G([\cdot,\eta],\cdot),\\
Ric^{\nabla}(v_{1},v_{2}) =&\ Ric^{\nabla^{L}}(v_{1},v_{2}) -\tfrac{1}{2}(D_{v_{1}}DG)_{v_{2}}(\cdot,\cdot) +\tfrac{1}{4}D_{v_{1}}G(\cdot_{1},\cdot_{2})D_{v_{2}}G(\cdot_{1},\cdot_{2}) \\& -\tfrac{1}{2}G(F(v_{1},\cdot),F(v_{2},\cdot)).
\end{align*}
\end{prop}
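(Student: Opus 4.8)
The plan is to obtain $Ric^{\nabla}$ by tracing the full curvature tensor of Theorem \ref{t:curvature} over a frame of $\mathcal{E}$ adapted to the orthogonal splitting $g_{_{\mathcal{E}}} = G \oplus g$ of $\mathcal{E} = \mathfrak{G}\oplus TM$, and then invoking Lemma \ref{l:nilpotent}. Writing $Ric^{\nabla}(e_{2},e_{3}) = \sum_{a}R^{\nabla}(E_{a},e_{2},e_{3},E^{a})$ and splitting $\{E_{a}\}$ into a $G$-frame of $\mathfrak{G}$ and a $g$-frame of $TM$, each Ricci component becomes a sum of a trace over $\mathfrak{G}$ and a trace over $TM$: $Ric^{\nabla}(\eta_{1},\eta_{2})$ is the $\mathfrak{G}$-trace of formula (1) plus the $TM$-trace of formula (3); $Ric^{\nabla}(\eta,v)$ is the $\mathfrak{G}$-trace of formula (2) plus the $TM$-trace of formula (4); and $Ric^{\nabla}(v_{1},v_{2})$ is the $\mathfrak{G}$-trace of formula (3) plus the $TM$-trace of formula (5). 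A couple of the curvature components that occur here are not listed literally in Theorem \ref{t:curvature}, e.g.\ $R^{\nabla}(v_{a},\eta_{1},\eta_{2},v^{a})$ and $R^{\nabla}(v_{a},\eta,v,v^{a})$; these are rewritten in terms of formulas (3) and (4) using the antisymmetry of $R^{\nabla}$ in its first pair of entries (immediate from (\ref{f:curvdef})) and in its last pair (from metric compatibility, Proposition \ref{PROPNAB1}(3)). Together with torsion-freeness (Proposition \ref{PROPNAB1}(1)) and the Jacobi identity these symmetries also give the pair-exchange symmetry of $R^{\nabla}$, hence the symmetry of $Ric^{\nabla}$ and the irrelevance of which slots one contracts.

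In carrying out the traces, two mechanisms produce the stated simplifications. First, once an outer $\mathfrak{G}$-trace is applied, every term of Theorem \ref{t:curvature} of the form $\tr_{G}G([\eta_{i},\cdot],\eta_{j})G([\eta_{k},\cdot],\eta_{l})$ with one of $\{i,j\}$ or $\{k,l\}$ among the traced slots, and every term $G([\eta_{i},[\eta_{j},\eta_{k}]],\eta_{l})$ with a traced slot present, collapses to the trace on $\mathfrak{G}$ of an operator of the form $\ad_{(\cdot)}$ or $\ad_{(\cdot)}\circ\ad_{(\cdot)}$, hence vanishes by Lemma \ref{l:nilpotent}; this is where nilpotency of $\mathcal{G}$ enters, and it is what removes the ``$\ad$-trace'' terms from formulas (1)--(4). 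Second, any term containing $F(v_{a},v^{a})$, or $F(\cdot,\cdot)$ traced antisymmetrically, drops out, which removes the $G([\eta,F(v,v')],\eta')$- and $G([\eta,\eta'],F(v,v'))$-type terms of formulas (3)--(5). The surviving $R^{\nabla^{L}}$-, $D\,DG$-, $DG\cdot DG$-, $G(F,\cdot)G(F,\cdot)$-, $\tr_{G}G([\cdot,\eta_{1}],[\cdot,\eta_{2}])$- and $\tr_{G}G([\cdot_{1},\cdot_{2}],\eta_{1})G([\cdot_{1},\cdot_{2}],\eta_{2})$-type terms then have to be collected across the two traces and across the built-in symmetrizations of Theorem \ref{t:curvature} (the ``$2\leftrightarrow3$'' and ``$1\leftrightarrow2$'' instructions): e.g.\ $-\tfrac{1}{2}(D_{\cdot}DG)_{\cdot}(\eta_{1},\eta_{2})$ is the $TM$-trace of the first term of (3), the $DG\cdot DG$ contributions in the first line come from both the $\mathfrak{G}$-trace of (1) and the $TM$-trace of (3), and $-\tfrac{1}{2}G(F(v_{1},\cdot),F(v_{2},\cdot))$ combines a piece of the $\mathfrak{G}$-trace of (3) with a piece of the $TM$-trace of (5). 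As a consistency check, when $F$ and $DG$ vanish the first formula recovers the Ricci curvature of a left-invariant metric on a nilpotent Lie group.

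The only real difficulty is the bookkeeping: tracking the abstract trace notation together with the symmetrizations of Theorem \ref{t:curvature} so that every sign and rational coefficient lands correctly, and checking that the ``$F\wedge F$''- and iterated-bracket contributions either cancel in pairs or are killed by Lemma \ref{l:nilpotent}. No ingredient beyond Theorem \ref{t:curvature}, Lemma \ref{l:nilpotent}, Proposition \ref{PROPNAB1} and the elementary symmetries of a torsion-free metric algebroid connection is needed.
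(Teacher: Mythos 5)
Your plan is correct and coincides with what the paper intends: Proposition \ref{p:Ricci} is stated without proof precisely because it follows by tracing Theorem \ref{t:curvature} over a $g_{_{\mathcal{E}}}$-orthonormal frame adapted to $\mathfrak{G}\oplus TM$, using the symmetries of $R^{\nabla}$ for the unlisted components, antisymmetry of $F$, and Lemma \ref{l:nilpotent} (with some pairwise cancellations among iterated-bracket terms, which you correctly allow for). Your sanity check against the left-invariant nilpotent Lie group Ricci formula confirms the coefficients, so there is nothing to add beyond the bookkeeping you describe.
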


\begin{prop} \label{p:scalar} Assuming $\mathcal{G}$ is nilpotent one has
\begin{align*}
R^{\N} =&\ R_g -(D_{\cdot_{1}}DG)_{\cdot_{1}}(\cdot_{2},\cdot_{2}) -\tfrac{1}{4}D_{\cdot_{3}}G(\cdot_{1},\cdot_{1})D_{\cdot_{3}}G(\cdot_{2},\cdot_{2}) \\&\ +\frac{3}{4}D_{\cdot_{3}}G(\cdot_{1},\cdot_{2})D_{\cdot_{3}}G(\cdot_{1},\cdot_{2}) -\tfrac{1}{4}G(F(\cdot_{1},\cdot_{2}), F(\cdot_{1},\cdot_{2})) -\tfrac{1}{4}\tr_{G}G([\cdot_{1},\cdot_{2}],[\cdot_{1},\cdot_{2}]).
\end{align*}
\end{prop}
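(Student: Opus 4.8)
The plan is to obtain the scalar curvature $R^{\nabla} = \tr_{g_{\mathcal{E}}} Ric^{\nabla}$ by tracing the Ricci formula of Proposition \ref{p:Ricci} over $g_{\mathcal{E}} = G \oplus g$, splitting the trace into the $\mathfrak{G}$-part (trace of $Ric^{\nabla}(\eta_1,\eta_2)$ against $G$) and the $TM$-part (trace of $Ric^{\nabla}(v_1,v_2)$ against $g$); the mixed block $Ric^{\nabla}(\eta,v)$ does not contribute. This reduces the problem to a bookkeeping exercise matching terms.

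First I would trace the $TM$-block: $\tr_g Ric^{\nabla^L}(v_1,v_2) = R_g$, the term $-\tfrac12 (D_{v_1}DG)_{v_2}(\cdot,\cdot)$ becomes $-(D_{\cdot_1}DG)_{\cdot_1}(\cdot_2,\cdot_2)$, the term $\tfrac14 D_{v_1}G(\cdot_1,\cdot_2) D_{v_2}G(\cdot_1,\cdot_2)$ becomes $\tfrac14 D_{\cdot_3}G(\cdot_1,\cdot_2) D_{\cdot_3}G(\cdot_1,\cdot_2)$, and $-\tfrac12 G(F(v_1,\cdot),F(v_2,\cdot))$ becomes $-\tfrac12 G(F(\cdot_1,\cdot_2),F(\cdot_1,\cdot_2))$ (note the second $\cdot$ in each $F$ is the $\mathfrak{G}$-trace implicit in the abstract notation, while $v_1,v_2$ get the $g$-trace). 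Next I would trace the $\mathfrak{G}$-block against $G$. The first term $-\tfrac12 (D_\cdot DG)_\cdot(\eta_1,\eta_2)$ contributes $-\tfrac12 (D_{\cdot_1} DG)_{\cdot_1}(\cdot_2,\cdot_2)$ after the $G$-trace over $\eta_1,\eta_2$. The two $DG \cdot DG$ terms $-\tfrac14 D_{\cdot_2}G(\cdot_1,\cdot_1)D_{\cdot_2}G(\eta_1,\eta_2)$ and $+\tfrac12 D_{\cdot_2}G(\eta_1,\cdot_1)D_{\cdot_2}G(\cdot_1,\eta_2)$ give $-\tfrac14 D_{\cdot_3}G(\cdot_1,\cdot_1)D_{\cdot_3}G(\cdot_2,\cdot_2)$ and $+\tfrac12 D_{\cdot_3}G(\cdot_1,\cdot_2)D_{\cdot_3}G(\cdot_1,\cdot_2)$ respectively. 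The curvature term $\tfrac14 G(F(\cdot_1,\cdot_2),\eta_1)G(F(\cdot_1,\cdot_2),\eta_2)$ traces to $+\tfrac14 G(F(\cdot_1,\cdot_2),F(\cdot_1,\cdot_2))$. The bracket terms $-\tfrac12 \tr_G G([\cdot,\eta_1],[\cdot,\eta_2])$ and $+\tfrac14 \tr_G G([\cdot_1,\cdot_2],\eta_1)G([\cdot_1,\cdot_2],\eta_2)$ give $-\tfrac12 \tr_G G([\cdot_1,\cdot_2],[\cdot_1,\cdot_2])$ and $+\tfrac14 \tr_G G([\cdot_1,\cdot_2],[\cdot_1,\cdot_2])$.

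Finally I would collect: the two $(D DG)$ contributions combine to $-(D_{\cdot_1}DG)_{\cdot_1}(\cdot_2,\cdot_2)$; the $D_\cdot G(\cdot,\cdot)D_\cdot G(\cdot,\cdot)$ contributions combine to $\tfrac14 + \tfrac12 = \tfrac34$ times $D_{\cdot_3}G(\cdot_1,\cdot_2)D_{\cdot_3}G(\cdot_1,\cdot_2)$, leaving $-\tfrac14 D_{\cdot_3}G(\cdot_1,\cdot_1)D_{\cdot_3}G(\cdot_2,\cdot_2)$; the two $G(F,F)$ contributions combine to $\tfrac14 - \tfrac12 = -\tfrac14$ times $G(F(\cdot_1,\cdot_2),F(\cdot_1,\cdot_2))$; and the two bracket contributions combine to $-\tfrac12 + \tfrac14 = -\tfrac14$ times $\tr_G G([\cdot_1,\cdot_2],[\cdot_1,\cdot_2])$. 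This is exactly the claimed formula. I would also note that the nilpotency hypothesis is inherited from Proposition \ref{p:Ricci} and Lemma \ref{l:nilpotent} (it is what allowed the $Ric^{\nabla}$ formula to be written without the traceless bracket terms); no further use of it is needed here.

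The only real obstacle is sign- and index-bookkeeping in the abstract trace notation — in particular keeping straight which slots of $F$ and of $DG$ receive a $g$-trace versus a $G$-trace when the raw $Ric^{\nabla}$ expressions already contain partially-traced terms, and making sure the symmetrization in the $\eta_1 \leftrightarrow \eta_2$ and $v_1 \leftrightarrow v_2$ slots does not double-count. Since all the heavy lifting has been done in Theorem \ref{t:curvature} and Proposition \ref{p:Ricci}, this is a routine (if error-prone) contraction, and the proof is just the display of the two traced blocks followed by the collection step above.
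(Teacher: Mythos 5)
Your approach is the intended one: the paper states Proposition \ref{p:scalar} without proof, and it is precisely the block trace of Proposition \ref{p:Ricci} with respect to $G \oplus g$ that you carry out, with the mixed block contributing nothing, and your final collection of coefficients reproduces the stated formula. Two small slips worth fixing: the traced $TM$-block term is $-\tfrac{1}{2}(D_{\cdot_{1}}DG)_{\cdot_{1}}(\cdot_{2},\cdot_{2})$, not $-(D_{\cdot_{1}}DG)_{\cdot_{1}}(\cdot_{2},\cdot_{2})$ (your collection step, which adds $-\tfrac{1}{2}$ from each block to reach the total coefficient $-1$, already uses the correct value), and in $G(F(v_{1},\cdot),F(v_{2},\cdot))$ the repeated slot is a $g$-trace over $TM$ since the second argument of $F$ is a tangent vector, not a $\mathfrak{G}$-trace as your parenthetical states.
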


We will now decompose the quantity $-d^{*}H$ which governs the evolution of $H$ in generalized Ricci flow.  For this we will extend $D$ to a connection on $\mathcal{E}= \mathfrak{G} \oplus TM$ by setting $D=D \oplus  \nabla^{L} $. 
\begin{prop} \label{p:dstarHdecomp} Assuming $\mathcal{G}$ is nilpotent one has
\begin{align*}
-d^{*}H =&\ D_{\cdot}H(\cdot, *,*) +\frac{1}{2}D_{\cdot_{1}}G(\cdot_{2},\cdot_{2})H(\cdot_{1}, *, * ) -D_{\cdot_{1}}G(\cdot_{2},\pi_{\mathfrak{G}}*) \wedge H(\cdot_{1},\cdot_{2},*)\\ &\ -\frac{1}{2}G(F(\cdot_{1},\cdot_{2}), \pi_{\mathfrak{G}}*) \wedge H(\cdot_{1},\cdot_{2},*) +\frac{1}{2}\tr_{G}G([\cdot_{1},\cdot_{2}],\pi_{\mathfrak{G}}*) \wedge H(\cdot_{1},\cdot_{2},*).
\end{align*}
\end{prop}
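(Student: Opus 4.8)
The plan is to reduce the claim to a divergence identity for the metric, torsion-free Lie algebroid connection $\nabla$ and then to expand $\nabla$ via Proposition \ref{PROPNABDEC}. First, since $\nabla$ is torsion-free and compatible with $g_{_{\mathcal{E}}}$ (Proposition \ref{PROPNAB1}), and since the correspondence of Section \ref{SECTDC} intertwines the algebroid differential $d$ with the de Rham differential on $P$, intertwines $\nabla$ with the Levi-Civita connection $\bar{\nabla}^{L}$ on $P$, and preserves $g_{_{\mathcal{E}}}$, the operator $-d^{*}$ applied to $H$ corresponds to $-d^{*}\bar{H}$ on $P$. By the standard Riemannian identity for the codifferential of a form this equals the $\bar{\nabla}^{L}$-divergence of $\bar{H}$, so translating back gives the identity $-d^{*}H = (\nabla_{\cdot}H)(\cdot,*,*)$, the trace being taken over $g_{_{\mathcal{E}}}$ in the first slot of $H$. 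This is the only conceptual step.

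Next I would extend $D$ to $\mathcal{E}$ by $D = D \oplus \nabla^{L}$ as indicated before the statement, set $A := \nabla - D \in \Gamma(\mathcal{E}^{*}\otimes\mathcal{E}^{*}\otimes\mathcal{E})$, and read its components $A(v_{1},v_{2}) = -\tfrac{1}{2}F(v_{1},v_{2})$, $A(v,\eta)$, $A(\eta,v)$, $A(\eta_{1},\eta_{2})$ off Proposition \ref{PROPNABDEC}; each is a sum of an $F$-term, a $DG$-term, and, in the $\mathfrak{G}\times\mathfrak{G}$ slot, bracket terms. Because the directional-derivative parts of $\nabla_{e_{1}}H$ and $D_{e_{1}}H$ coincide, one has $(\nabla_{e_{1}}H - D_{e_{1}}H)(e_{2},e_{3},e_{4}) = -H(A(e_{1},e_{2}),e_{3},e_{4}) - H(e_{2},A(e_{1},e_{3}),e_{4}) - H(e_{2},e_{3},A(e_{1},e_{4}))$; tracing $e_{1}$ against $e_{2}$ over $g_{_{\mathcal{E}}}$ shows $-d^{*}H - D_{\cdot}H(\cdot,*,*)$ equals $-H(\tr_{g_{_{\mathcal{E}}}}A,*,*)$ plus twice the antisymmetrization in the two free slots of $T(e_{3},e_{4}) := -\tr_{g_{_{\mathcal{E}}}}^{(1)}H(\cdot_{1},A(\cdot_{1},e_{3}),e_{4})$ (the last two of the three terms combine into $2\,\mathrm{Alt}\,T$ by antisymmetry of $H$).

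Third, I would evaluate these two pieces by decomposing every trace over the orthogonal summands $\mathfrak{G}$ and $TM$ and substituting Proposition \ref{PROPNABDEC}. For $-H(\tr_{g_{_{\mathcal{E}}}}A,*,*)$: the $TM$-trace of $A(v,v) = -\tfrac{1}{2}F(v,v)$ vanishes by antisymmetry of $F$, and in the $\mathfrak{G}$-trace of $A(\eta,\eta)$ the $[\eta,\eta]$-term and the two $G^{-1}G([\cdot,\eta],\eta)$-terms vanish by Lemma \ref{l:nilpotent}, leaving $\tfrac{1}{2}D_{\cdot_{1}}G(\cdot_{2},\cdot_{2})H(\cdot_{1},*,*)$, the second term of the claim. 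For $T$: the $F$-contributions whose surviving free slot of $A$ lands in $TM$ cancel between the $\mathfrak{G}$- and $TM$-traces (antisymmetry of $F$ and of $H$); the $DG$-contributions coming from $A(v,\eta)$ and $A(\eta_{1},\eta_{2})$ add to $-D_{\cdot_{1}}G(\cdot_{2},\pi_{\mathfrak{G}}*)H(\cdot_{1},\cdot_{2},*)$ while that from $A(\eta,v)$ vanishes because $DG$ is symmetric and $H$ antisymmetric in the relevant pair; among the bracket contributions in $A(\eta_{1},\eta_{2})$ the $\tfrac{1}{2}[\eta_{1},\eta_{2}]$-term cancels one of the $G^{-1}G$-terms, and the remaining $G^{-1}G([\cdot,\eta_{1}],\eta_{2})$-term, after relabelling the two traced $\mathfrak{G}$-pairs and using antisymmetry of the bracket, becomes $+\tfrac{1}{2}\tr_{G}G([\cdot_{1},\cdot_{2}],\pi_{\mathfrak{G}}*)H(\cdot_{1},\cdot_{2},*)$; the $F$-contribution whose free slot lies in $\mathfrak{G}$ becomes $-\tfrac{1}{2}G(F(\cdot_{1},\cdot_{2}),\pi_{\mathfrak{G}}*)H(\cdot_{1},\cdot_{2},*)$. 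Taking $2\,\mathrm{Alt}$ converts each of these three into the corresponding wedge term and produces exactly the remaining terms of the claim.

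Everything after the codifferential identity is a long but mechanical computation, so the main obstacle will be the bookkeeping: keeping the signs straight, distinguishing which index of $DG$, $F$, or the bracket is traced with $g$ versus with $G$, tracking where the projections $\pi_{\mathfrak{G}}$ enter, correctly converting the symmetric products into wedges under antisymmetrization, and systematically discarding the many terms killed by antisymmetry of $H$ and $F$ or by nilpotency via Lemma \ref{l:nilpotent}.
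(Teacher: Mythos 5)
Your proposal is correct and follows essentially the same route as the paper: both start from $-d^{*}H=\tr_{g_{_{\mathcal{E}}}}\nabla_{\cdot}H(\cdot,*,*)$, expand $\nabla$ against $D=D\oplus\nabla^{L}$ via Proposition \ref{PROPNABDEC}, kill the trace-bracket terms with Lemma \ref{l:nilpotent}, and collect the surviving $DG$, $F$, and bracket terms (the paper splits the trace into $\mathfrak{G}$- and $TM$-frames and uses $D_{v}e-\nabla_{v}e$ explicitly, which is just a different packaging of your difference tensor $A=\nabla-D$). The cancellations you identify match those in the paper's computation, so the argument goes through as outlined.
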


\begin{proof}In the following we will be using Proposition \ref{PROPNABDEC} without explicit reference. We will also let $\{\eta_{i}\}$ and $\{v_{a}\}$ be respective othonormal frames for $\mathfrak{G}$ and $TM$ and will set $e=\eta+w$ and $e'= \eta' +w' \in \mathcal{E}= \mathfrak{G} \oplus TM$.\\
First note: 
\begin{align*}
-d^{*}H(e,e')=\tr_{g_{_{\mathcal{E}}}}\nabla_{\cdot} H(\cdot,e,e')
           						=\nabla_{\eta_{i}}H(\eta_{i}, e,e') + \nabla_{v_{a}} H(v_{a},e,e').
\end{align*}    					
Consider then 
\begin{align*}
1) \ \nabla_{\eta_{i}}H(\eta_{i}, e,e') &= -H(\nabla_{\eta_{i}}\eta_{i}, e,e')-H(\eta_{i}, \nabla_{\eta_{i}}e,e')-H(\eta_{i},e,\nabla_{\eta_{i}}e')\\
                     &=\frac{1}{2}H( g^{-1}(D_{*}G)(\eta_{i},\eta_{i}), e, e') +\frac{1}{2}H(\eta_{i}, g^{-1}(D_{*}G)(\eta_{i},   	                   \eta), e')  \\& \hspace*{4mm} -\frac{1}{2}H(\eta_{i},[\eta_{i},\eta],e') -\frac{1}{2}H(\eta_{i},G^{-1}G([*,\eta_{i}],\eta),e')  \\& \hspace*{4mm}
                     -\frac{1}{2}H(\eta_{i},G^{-1}G([*,\eta],\eta_{i}),e') -\frac{1}{2}H(\eta_{i}, G^{-1}(D_{w}G)(\eta_{i},*),e')
                    \\& \hspace*{4mm}   -\frac{1}{2}H(\eta_{i}, g^{-1}G(F(w,*),\eta_{i}),e')  - (\text{primed} \leftrightarrow \text{unprimed} \text{ on last six terms}).
\end{align*}
Note that terms involving $\tr_{G}G([*,\cdot],\cdot)$ vanish by Lemma \ref{l:nilpotent} because $\mathcal{G}$ is nilpotent.
This then simplifies to 
\begin{align*}
\frac{1}{2}&D_{\cdot_{1}}G(\cdot_{2},\cdot_{2})H(\cdot_{1},e,e') -\frac{1}{2}D_{\cdot_{1}}G(\cdot_{2},\eta)H(\cdot_{1},\cdot_{2},e') -\frac{1}{2}H(F(w,\cdot),\cdot,e')  \\&  +\frac{1}{2}G([\cdot_{1},\cdot_{2}],\eta)H(\cdot_{1},\cdot_{2},e')
- (\text{primed} \leftrightarrow \text{unprimed} \text{ on last three terms}).
\end{align*}
Next, consider 
\begin{align*}
2) \ \nabla_{v_{a}} H(v_{a},e,e')&=v_{a}[H(v_{a},e,e')]- H(\nabla_{v_{a}}v_{a},e,e') -H(v_{a},\nabla_{v_{a}}e,e') -H(v_{a},e,\nabla_{v_{a}}e')\\
&=   D_{v_{a}}H(v_{a},e,e') +H(\nabla^{L}_{v_{a}}v_{a},e,e')+H(v_{a},D_{v_{a}}e,e') +H(v_{a},e,D_{v_{a}}e')  \\& \hspace*{4mm}  - H(\nabla_{v_{a}}v_{a},e,e')-H(v_{a},\nabla_{v_{a}}e,e') -H(v_{a},e,\nabla_{v_{a}}e')\\
  &= D_{v_{a}}H(v_{a},e,e') +\frac{1}{2}H(F(v_{a},v_{a}),e,e')   -\frac{1}{2}H(v_{a},G^{-1}(D_{v_{a}}G)(\eta,*), e')  \\& \hspace*{4mm}  -\frac{1}{2} H(v_{a},g^{-1}G(F(v_{a},*),\eta), e') +  \frac{1}{2}H(v_{a},F(v_{a},w),e') \\& \hspace*{4mm}  - (\text{primed} \leftrightarrow \text{unprimed} \text{ on last three terms}). \\
  &= D_{\cdot}H(\cdot,e,e') -\frac{1}{2}D_{\cdot_{1}}G(\eta,\cdot_{2})H(\cdot_{1},\cdot_{2},e') -\frac{1}{2}G(F(\cdot_{1},\cdot_{2}),\eta)H(\cdot_{1},\cdot_{2},e') \\& \hspace*{4mm} + \frac{1}{2}H(\cdot, F(\cdot,w),e') - (\text{primed} \leftrightarrow \text{unprimed} \text{ on last three terms}),
  \end{align*}					
  where, for $e= \eta +w$, we used \[D_{v}e- \nabla_{v}e= \frac{1}{2}F(v,w) -\frac{1}{2}G^{-1}(D_{v}G)(\eta,*) -\frac{1}{2}g^{-1}G(F(v,*),\eta).\]
	
	Combining the above terms yields the expression for  $-d^{*}H$ in the proposition.				                
\end{proof}

\section{Dimensional Reduction of Generalized Ricci Flow} \label{s:dimredGRF}

Let $\pi: P \rightarrow M$ be a nilpotent $\mathcal{G}$-principal bundle over a compact manifold equipped with an invariant generalized Ricci flow $(\bar{g}(t),\bar{H}(t))$. In this section we will first recast this solution  as a solution to a system of PDEs on $M$. We will then use the diffeomorphism action on the principal bundle to obtain gauge modified PDEs on $M$ which will be central in analyzing the functionals introduced in Section \ref{s:energy}.  

\subsection{Decomposition of families of metrics} \label{ss:families}

To decompose the Ricci flow equations according to the tensor decompositions of Section \ref{SECTDC}, we begin with a discussion of general one-parameter families.  Let now $\bar{g}(t)$ and $\bar{H}(t)$ be an invariant, time dependent metric and three form on a $\mathcal{G}$ principal bundle $\pi: P\rightarrow M$. Consider the  isomorphisms $\delta(t): TP \rightarrow \pi^{*}\mathcal{E}$ given by $\delta(Z)= \{p,\bar{A}Z \} + \pi_{*}Z,$ where $Z \in T_{p}P$ and $\bar{A}(t) $ are the  connections associated with $\bar{g}(t)$. Using this, as was done above, we obtain a time dependent fiberwise metric $g_{_{\mathcal{E}}}(t)= G(t) \oplus g(t)$ on $\mathcal{E}= \mathfrak{G}\oplus TM$, a section $H(t)$ of $\wedge^{3}\mathcal{E}^{*}$.  We further obtain a time dependent bracket $[,]$ and Lie algebroid connection $\nabla$ on $\mathcal{E}$ and the associated time dependent  operator $d$ and connection $D$ which were defined above for a fixed time. At times we will extend $D$ to a connection on $\mathcal{E}= \mathfrak{G} \oplus TM$ by setting $D=D \oplus  \nabla^{L}$, with associated curvature tensors.

It is important to note that since $\delta(t)$ is time dependent $\frac{\partial g_{_{\mathcal{E}}}}{\partial t}$ does not correspond to $\frac{\partial \bar{g}}{\partial t}$ but, by definition, $\dot{g}_{_{\mathcal{E}}}$ does. In general, if $\bar{B}(t)$ is a time dependent invariant tensor on $P$ then we define $\dot{B}(t)$ to be the tensor on $M$ that corresponds to $\frac{\partial  B}{\partial  t}$ using $\delta(t)$. 

To state some other notation, $\delta_{p}(t): T_{p}P \rightarrow T_{m}\mathcal{E}$, for $m=\pi(p)$, will denote the restriction of $\delta(t)$ at $p\in P$. It maps the vertical and horizontal tangent spaces $V_{p}P$ and $H^{t}_{p}P$ respectively to $\mathfrak{G}|_{m}$ and $T_{m}M$. Moreover, $\frac{dA}{dt}$ is the time dependent section of $T^{*}M \otimes \mathfrak{G}$ that is associated with $\frac{d\bar{A}}{dt}$. Similarly, $F(t) \in \wedge^{2}T^{*}M\otimes \mathfrak{G}$ corresponds to the curvature $\bar{F}(t):=d\bar{A}+\frac{1}{2}[\bar{A},\bar{A}]$.

\subsection{The Generalized Ricci Flow Equations}

Suppose $(\bar{g}(t),\bar{H}(t))$ is an invariant solution to the generalized Ricci flow equations on $P \rightarrow M$:
\begin{equation} \label{EQGRF2}
\frac{\partial \bar{g}}{\partial t} =\ -2Ric(\bar{g}) + \frac{1}{2} \bar{\mathcal{H}} \hspace{15mm}
\frac{\partial \bar{H}}{\partial t}  =\ \Delta_{\bar{g}} \bar{H},
\end{equation}
where $\bar{\mathcal{H}}(*,*)=\tr_{\bar{g}}\bar{H}(*,\cdot_{1},\cdot_{2})\bar{H}(*,\cdot_{1},\cdot_{2})$.   
To recast  $(\bar{g}(t),\bar{H}(t))$ as a solution of a system of PDEs on $M$, we first use the discussion of \S \ref{ss:families} to obtain $g_{_{\mathcal{E}}}(t)= G(t) \oplus g(t)$ and $H(t)$ on $\mathcal{E}= \mathfrak{G}\oplus TM$ along with the other time dependent data defined in that section.   In particular, using this equivalence, the equations in (\ref{EQGRF2}) are equivalent to:  
\begin{equation} \label{EQGRF3}
\dot{g}_{_{\mathcal{E}}}=-2Ric^{\nabla} +\frac{1}{2}\mathcal{H} \hspace{15mm} \dot{H}  =\ -dd^{*}H,
\end{equation}
where $\mathcal{H}(*,*)=\tr_{g_{_{\mathcal{E}}}}H(*,\cdot_{1},\cdot_{2})H(*,\cdot_{1},\cdot_{2})$, $d^{*}H=-\tr_{g_{_{\mathcal{E}}}}\nabla_{\cdot} H(\cdot,*,*)$ and where we used $d\bar{H}=0$. To decompose these equations we first prove a general decomposition formula for  $\dot{g}_{_{\mathcal{E}}}$. 

\begin{lemma} \label{l:generalvariation} Let $(\bar{g}_t, \bar{H}_t)$ be a one parameter family of invariant metrics and three-forms.  For $\eta_{i} \in \Gamma(\mathfrak{G})$ and $v_{a} \in \Gamma(TM)$, one has
\begin{itemize}
\item[1)] $\dot{g}_{_{\mathcal{E}}}(\eta_{1},\eta_{2})=\frac{\partial G}{\partial  t}(\eta_{1},\eta_{2})$.
\item[2)]  $\dot{g}_{_{\mathcal{E}}}(\eta,v)= G(\eta,\frac{dA}{dt}v)$.
\item[3)]   $\dot{g}_{_{\mathcal{E}}}(v_{1},v_{2})= \frac{\partial  g}{\partial t}(v_{1},v_{2})$.
\end{itemize}
\end{lemma}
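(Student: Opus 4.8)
The plan is to compute $\dot{B}(t)$ for a general invariant tensor $\bar B(t)$ and then specialize to $\bar B = \bar g$. Recall that $\dot B(t)$ is defined to be the tensor on $M$ corresponding, via $\delta(t)$, to $\tfrac{\partial \bar B}{\partial t}$; the subtlety is precisely that $\delta(t)$ varies in $t$, so $\dot g_{_{\mathcal E}} \ne \tfrac{\partial}{\partial t} g_{_{\mathcal E}}$ in general. The key structural input is the description of $\delta_p(t)\colon T_pP \to \mathfrak G|_m \oplus T_mM$: on the horizontal space $H^t_pP$ it is $\pi_*$, while on the vertical space it identifies $V_pP$ with $\mathfrak G|_m$ canonically (independent of $t$), and the only $t$-dependence in $\delta_p(t)$ comes through the connection $\bar A(t)$ entering the splitting $T_pP = V_pP \oplus H^t_pP$. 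Concretely, $\delta(t)(Z) = \{p, \bar A(t) Z\} + \pi_* Z$.

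First I would fix invariant sections, evaluate on the lifts $\tilde e_i = \delta(t)^{-1}\pi^* e_i$, and differentiate. For $e_i = \eta_i \in \Gamma(\mathfrak G)$, the lift $\tilde\eta_i$ is the vertical vector field associated to $\eta_i$, which is \emph{independent of $t$} (the vertical distribution does not depend on the connection). Hence $\tfrac{\partial}{\partial t}\big[\bar g(\tilde\eta_1,\tilde\eta_2)\big] = \big(\tfrac{\partial \bar g}{\partial t}\big)(\tilde\eta_1,\tilde\eta_2)$, and translating back via $\delta(t)$ gives exactly $\dot g_{_{\mathcal E}}(\eta_1,\eta_2) = \tfrac{\partial G}{\partial t}(\eta_1,\eta_2)$, which is part (1). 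For part (3), take $e_i = v_i \in \Gamma(TM)$; now $\tilde v_i = \delta(t)^{-1}\pi^* v_i$ \emph{is} the $\bar A(t)$-horizontal lift, hence does depend on $t$, and $\tfrac{d}{dt}\tilde v_i$ is vertical — indeed it is the vertical vector field associated to $-\tfrac{dA}{dt}(v_i) \in \Gamma(\mathfrak G)$, since differentiating $\bar A(t)\tilde v_i(t) = 0$ gives $\tfrac{d\bar A}{dt}(\tilde v_i) + \bar A(t)\tfrac{d}{dt}\tilde v_i = 0$. Then
\[
\tfrac{\partial}{\partial t}\big[\bar g(\tilde v_1,\tilde v_2)\big] = \big(\tfrac{\partial \bar g}{\partial t}\big)(\tilde v_1,\tilde v_2) + \bar g\big(\tfrac{d}{dt}\tilde v_1, \tilde v_2\big) + \bar g\big(\tilde v_1, \tfrac{d}{dt}\tilde v_2\big),
\]
but the last two terms pair a vertical vector with a horizontal vector and hence vanish, while the left side is $\tfrac{\partial}{\partial t}g(v_1,v_2)$ and the first term on the right translates to $\dot g_{_{\mathcal E}}(v_1,v_2)$; this gives (3). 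For part (2), take $e_1 = \eta$ vertical and $e_2 = v$ horizontal: now $\tfrac{\partial}{\partial t}\big[\bar g(\tilde\eta,\tilde v)\big] = \big(\tfrac{\partial\bar g}{\partial t}\big)(\tilde\eta,\tilde v) + \bar g(\tilde\eta, \tfrac{d}{dt}\tilde v)$ (the $\tilde\eta$ term dropping since $\tilde\eta$ is $t$-independent). The left side vanishes because $\tilde\eta$ and $\tilde v$ are $\bar g(t)$-orthogonal for every $t$; and $\tfrac{d}{dt}\tilde v$ is the vertical field associated to $-\tfrac{dA}{dt}(v)$, so $\bar g(\tilde\eta,\tfrac{d}{dt}\tilde v) = -G(\eta, \tfrac{dA}{dt}v)$. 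Rearranging and translating via $\delta(t)$ yields $\dot g_{_{\mathcal E}}(\eta,v) = G(\eta, \tfrac{dA}{dt}v)$, which is (2).

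The main obstacle is bookkeeping the $t$-dependence of $\delta(t)$ correctly: one must carefully distinguish the $t$-independent vertical lift of a section of $\mathfrak G$ from the $t$-dependent horizontal lift of a vector field on $M$, and verify that $\tfrac{d}{dt}$ of the horizontal lift is vertical with the claimed identification $-\tfrac{dA}{dt}$. Once this ``moving frame'' is set up, every term is either a vertical-horizontal pairing that vanishes by $\bar g(t)$-orthogonality or is manifestly the asserted expression, so no genuine computation remains. I would present the three cases uniformly by writing $\tfrac{d}{dt}\tilde e = \delta(t)^{-1}\pi^*(\dot e) + (\text{correction from }\dot\delta)$ and noting the correction is purely vertical, proportional to $\tfrac{dA}{dt}\tau(e)$.
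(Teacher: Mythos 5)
Your argument is correct and is essentially the paper's own proof: evaluate on the lifts $\tilde e = \delta(t)^{-1}\pi^*e$, use that vertical lifts are $t$-independent while $\tfrac{d}{dt}$ of the $\bar A(t)$-horizontal lift is vertical and corresponds to $-\tfrac{dA}{dt}v$ (obtained by differentiating $\bar A(t)\tilde v(t)=0$), and exploit $\bar g(t)$-orthogonality of vertical and horizontal vectors in parts (2) and (3). No substantive differences from the paper's proof.
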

\begin{proof} In the following, $\eta_{i} \in \mathfrak{G}|_{m}$, for $m\in M$ and $\tilde{\eta}_{i}= \delta_{p}^{-1}\eta_{i} \in V_{p}P$. Also, $v_{a} \in T_{m}M$ and $\tilde{v}_{a}(t)=\delta_{p}^{-1}(t)v_{a} \in H_{p}^{t}P$.  For Part 1) we compute
 \begin{equation*}
\dot{g}_{_{\mathcal{E}}}(\eta_{1},\eta_{2})= \frac{\partial\bar{g}}{\partial t}(\tilde{\eta}_{1},\tilde{\eta}_{2})=\frac{\partial}{\partial t}\bar{g}(\tilde{\eta}_{1},\tilde{\eta}_{2}) = \frac{\partial}{\partial t} g_{_{\mathcal{E}}}(\eta_{1},\eta_{2}) = \frac{\partial G}{\partial t}(\eta_{1},\eta_{2}), 
\end{equation*}
where we used $\frac{d\tilde{\eta}_{i}}{dt}=0$.  Next we compute
\begin{align*}
\dot{g}_{_{\mathcal{E}}}(\eta,v) = \frac{\partial \bar{g}}{\partial t}(\tilde{\eta}, \tilde{v})= -\bar{g}(\tilde{\eta}, \frac{d\tilde{v}}{dt})= \bar{g}(\tilde{\eta}, \lambda_{p}\frac{d\bar{A}}{dt}\tilde{v}) =G( \eta, \frac{dA}{dt}v), 
\end{align*}
where $\lambda_{p}: \mathfrak{g} \rightarrow V_{p}P$ is defined via $\bar{A}\lambda_{p}x=x$ for all $x \in \mathfrak{g}$.  Lastly we have
\begin{align*}
\dot{g}_{_{\mathcal{E}}}(v_{1},v_{2})= \frac{\partial \bar{g}}{\partial t}(\tilde{v}_{1},\tilde{v}_{2}) = \frac{\partial}{\partial t}\bar{g}(\tilde{v}_{1},\tilde{v}_{2}) =  \frac{\partial}{\partial t}g_{_{\mathcal{E}}}(v_{1},v_{2})= \frac{\partial g}{\partial t}(v_{1},v_{2}),
\end{align*}
where we used that $\bar{g}(\frac{d\tilde{v}_{i}}{dt},\tilde{v}_{j})=0.$
\end{proof}

Using Lemma \ref{l:generalvariation} together with Proposition \ref{p:Ricci} gives the following decomposition of the metric evolution equation in (\ref{EQGRF3}).

\begin{prop} \label{p:decGRF} Given $(\bar{g}(t), \bar{H}(t))$ an invariant solution of generalized Ricci flow as above, one has
\begin{align*}
1)&\ \frac{\partial G}{\partial t}(\eta_{1},\eta_{2}) = (D_{\cdot}DG)_{\cdot}(\eta_{1},\eta_{2}) +\frac{1}{2}D_{\cdot_{2}}G(\cdot_{1},\cdot_{1})D_{\cdot_{2}}G(\eta_{1},\eta_{2})-D_{\cdot_{2}}G(\eta_{1},\cdot_{1})D_{\cdot_{2}}G(\cdot_{1},\eta_{2}) \\ &\ \qquad \qquad \qquad -\frac{1}{2}G(F(\cdot_{1},\cdot_{2}), \eta_{1})G(F(\cdot_{1},\cdot_{2}), \eta_{2}) + \tr_{G}G([\cdot,\eta_{1}],[\cdot,\eta_{2}]) \\ &\ \qquad \qquad \qquad -\frac{1}{2}\tr_{G}G([\cdot_{1},\cdot_{2}],\eta_{1})G([\cdot_{1},\cdot_{2}],\eta_{2}) + \frac{1}{2}\mathcal{H}(\eta_{1},\eta_{2}),\\
2)&\ G(\frac{dA}{dt}v, \eta) = -G(D_{\cdot}F(v,\cdot),\eta) -D_{\cdot}G(F(v,\cdot),\eta)- \frac{1}{2}G(F(v,\cdot_{2}),\eta)D_{\cdot_{2}}G(\cdot_{1},\cdot_{1}) \\&\ \qquad \qquad \qquad +\tr_{G}D_{v}G([\cdot,\eta],\cdot) + \frac{1}{2}\mathcal{H}(v,\eta),\\
3)&\ \frac{\partial g}{ \partial t}(v_{1},v_{2}) = -2Ric^{\nabla^{L}}(v_{1},v_{2}) +(D_{v_{1}}DG)_{v_{2}}(\cdot,\cdot) -\frac{1}{2}D_{v_{1}}G(\cdot_{1},\cdot_{2})D_{v_{2}}G(\cdot_{1},\cdot_{2}) \\&\ \qquad \qquad \qquad +G(F(v_{1},\cdot),F(v_{2},\cdot))+ \frac{1}{2}\mathcal{H}(v_{1},v_{2}).
\end{align*}
\end{prop}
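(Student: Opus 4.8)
The plan is to read off each block of the tensor identity $\dot{g}_{_{\mathcal{E}}}=-2Ric^{\nabla}+\tfrac{1}{2}\mathcal{H}$ from (\ref{EQGRF3}) using the two computations already in place. First, Lemma \ref{l:generalvariation} identifies the three components of $\dot{g}_{_{\mathcal{E}}}$ with $\tfrac{\partial G}{\partial t}$ on $\mathfrak{G}\otimes\mathfrak{G}$, with $G(\,\cdot\,,\tfrac{dA}{dt}\,\cdot\,)$ on $\mathfrak{G}\otimes TM$, and with $\tfrac{\partial g}{\partial t}$ on $TM\otimes TM$. Second, Proposition \ref{p:Ricci} supplies, under the nilpotency hypothesis on $\mathcal{G}$, the corresponding three components of $Ric^{\nabla}$. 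The proof is then simply a matter of substituting the latter into the former, multiplying through by $-2$, and adding $\tfrac12\mathcal{H}$ in each block; no new geometric input is needed.

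Concretely, for part (1) one evaluates on $\eta_{1},\eta_{2}\in\Gamma(\mathfrak{G})$: the left side is $\tfrac{\partial G}{\partial t}(\eta_{1},\eta_{2})$ by Lemma \ref{l:generalvariation}(1), and $-2Ric^{\nabla}(\eta_{1},\eta_{2})$ turns the six terms of the first line of Proposition \ref{p:Ricci} into $(D_{\cdot}DG)_{\cdot}(\eta_{1},\eta_{2})$, $+\tfrac12 D_{\cdot_{2}}G(\cdot_{1},\cdot_{1})D_{\cdot_{2}}G(\eta_{1},\eta_{2})$, $-D_{\cdot_{2}}G(\eta_{1},\cdot_{1})D_{\cdot_{2}}G(\cdot_{1},\eta_{2})$, $-\tfrac12 G(F(\cdot_{1},\cdot_{2}),\eta_{1})G(F(\cdot_{1},\cdot_{2}),\eta_{2})$, $+\tr_{G}G([\cdot,\eta_{1}],[\cdot,\eta_{2}])$ and $-\tfrac12\tr_{G}G([\cdot_{1},\cdot_{2}],\eta_{1})G([\cdot_{1},\cdot_{2}],\eta_{2})$; adding $\tfrac12\mathcal{H}(\eta_{1},\eta_{2})$ is exactly the claimed formula. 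Part (2) is the mixed block $G(\tfrac{dA}{dt}v,\eta)=-2Ric^{\nabla}(\eta,v)+\tfrac12\mathcal{H}(v,\eta)$, and part (3) is the base block $\tfrac{\partial g}{\partial t}(v_{1},v_{2})=-2Ric^{\nabla}(v_{1},v_{2})+\tfrac12\mathcal{H}(v_{1},v_{2})$, both obtained in the same fashion from the second and third lines of Proposition \ref{p:Ricci}; in part (3) the term $Ric^{\nabla^{L}}(v_{1},v_{2})$ is simply carried along, producing the displayed $-2Ric^{\nabla^{L}}$.

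The only point requiring any care — and hence the "main obstacle", such as it is — is the bookkeeping of the abstract trace notation introduced in \S\ref{SECCURV}: one must make sure the dummy contraction indices $\cdot_{1},\cdot_{2},\cdot_{3}$ of Proposition \ref{p:Ricci} are propagated consistently through the multiplication by $-2$, and that the terms $Ric^{\nabla^{L}}$ and $\mathcal{H}$ are matched correctly across the identification $\delta(t)$. For the latter it is enough to recall that $\delta_{p}(t)$ restricts to a linear isometry on vertical and horizontal subspaces, so that the $g_{_{\mathcal{E}}}$-traces defining $\mathcal{H}$ on $\mathcal{E}$ agree with the $\bar g$-traces defining $\bar{\mathcal{H}}$ on $P$, which is already implicit in the equivalence of (\ref{EQGRF2}) and (\ref{EQGRF3}). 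With these conventions fixed, the three identities follow by inspection.
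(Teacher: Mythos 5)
Your proposal is correct and is exactly the paper's argument: the paper proves Proposition \ref{p:decGRF} by combining Lemma \ref{l:generalvariation} with Proposition \ref{p:Ricci} in the reduced equation $\dot{g}_{_{\mathcal{E}}}=-2Ric^{\nabla}+\tfrac{1}{2}\mathcal{H}$, which is precisely your block-by-block substitution. The sign and coefficient bookkeeping you carry out matches the stated formulas, so nothing further is needed.
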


\subsection{Gauge modified generalized Ricci flow}

Suppose $(\bar{g}(t),\bar{H}(t))$ is a solution to the generalized Ricci flow equations (\ref{EQGRF2}) on $P \rightarrow M$.  We will modify these equations by using flows of time dependent vector fields on $P$ and will then derive the corresponding system of PDEs on the base manifold.  We will be using the notation of Section \ref{SECTDC} throughout.

\subsubsection{Invariant generalized Ricci flow in the canonical gauge} \label{ss:CGF}

\begin{defn} \label{d:qdef} Given $P \to M$ a principal bundle and $\bar{g}$ an invariant metric, let
\begin{align*}
q :=&\ - \tfrac{1}{2} g^{-1} D G(\cdot, \cdot) \in TM\\
X :=&\ \gd^{-1} \pi^* q \in TP,
\end{align*}
where $\gd^{-1} \pi^*$ denotes the horizontal lift with respect to the connection $\bar{A}$ associated with $\bar{g}$.
\end{defn}

Now, given $(\bar{g}(t), \bar{H}(t))$ a solution of generalized Ricci flow, we obtain associated one parameter families of vector fields $q(t)$, $X(t)$ as in Definition \ref{d:qdef}.  Let $\phi_{t}$ and $\psi_{t}$ be the respective flows on $M$ and $P$ such that $\phi_{t=0}(m)=m$ and $\psi_{t=0}(p)=p$ for all $m \in M$ and $p \in P$.  Now define $\hat{g}(t)=\psi^{*}\bar{g}(t)$ and $\hat{H}(t)=\psi^{*}\bar{H}(t)$, which are time dependent invariant metrics and three forms on $P$. By a standard computation they satisfy the following modified generalized Ricci flow equations, which we will refer to as generalized Ricci flow \emph{in the canonical gauge}: 
 \begin{equation} \label{EQGRF3}
\frac{\partial \hat{g}}{\partial t} =\ -2Ric(\hat{g}) + \frac{1}{2} \hat{\mathcal{H}} + \mathcal{L}_{(\psi^{-1})_{*}X}\hat{g} \hspace{15mm}
\frac{\partial \hat{H}}{\partial t}  =\ \Delta_{\hat{g}} \hat{H} + d i_{(\psi^{-1})_{*}X}\hat{H},
\end{equation}
where  $\hat{\mathcal{H}}(*,*)=\tr_{\hat{g}}\hat{H}(*,\cdot_{1},\cdot_{2})\hat{H}(*,\cdot_{1},\cdot_{2})$.

Given $(\hat{g}(t), \hat{H}(t))$, we have the following data that corresponds to that defined in Section \ref{SECTDC}: 
the time dependent isomorphism $\hat{\delta(t)}: TP \rightarrow \pi^{*}\mathcal{E}$ given by $\hat{\delta}(Z)= \{p,\hat{A}Z \} + \pi_{*}Z,$ where $Z \in T_{p}P$ and $\hat{A}(t) $ are the connections associated with $\hat{g}(t)$; a time dependent fiberwise metric $g'_{_{\mathcal{E}}}(t)= G'(t) \oplus g'(t)$ on $\mathcal{E}= \mathfrak{G}\oplus TM$ and a section $H'(t)$ of $\wedge^{3}\mathcal{E}^{*}$.  We further obtain a time dependent bracket $[,]'$ and Lie algebroid connection $\nabla'$ on $\mathcal{E}$ and the associated time dependent  operator $d'$ and connection $D'$. In addition, $\frac{dA'}{dt}$ is the time dependent section of $T^{*}M \otimes \mathfrak{G}$ that is associated with  $\frac{d\hat{A}}{dt}$ and $F'(t) \in \wedge^{2}T^{*}M\otimes \mathfrak{G}$ corresponds to the curvature $\hat{F}(t):=d\hat{A}+\frac{1}{2}[\hat{A},\hat{A}]$.

To reformulate Equations \ref{EQGRF3} in terms of this data first set $q'=-\frac{1}{2}g'^{-1}D'G'(\cdot,\cdot)$ and note that $(\psi^{-1})_{*}X(t)= \hat{\delta}(t)^{-1}\pi^{*}q'(t),$ which is the horizontal lift of $q'(t)$ using the connection $\hat{A}(t)$. Equations \ref{EQGRF3} are then equivalent to:
\begin{equation} \label{f:GFF}
\dot{g'_{_{\mathcal{E}}}}=-2Ric^{\nabla'} +\frac{1}{2}\mathcal{H'} +\mathcal{L}_{q'} g'_{_{\mathcal{E}}} \hspace{15mm} \dot{H'}  =\ -d'd'^{*}H'+ d'i_{q'}H',
\end{equation}
where $\mathcal{H'}(*,*)=\tr_{g'_{_{\mathcal{E}}}}H'(*,\cdot_{1},\cdot_{2})H'(*,\cdot_{1},\cdot_{2})$, $\mathcal{L}_{q'} g'_{_{\mathcal{E}}}(e_{1},e_{2})= g'_{_{\mathcal{E}}}(\nabla'_{e_{1}}q',e_{2}) + g'_{_{\mathcal{E}}}(e_{1}, \nabla'_{e_{2}}q')$ for $e_{i} \in \mathcal{E}$, $d'^{*}H'=-\tr_{g'_{_{\mathcal{E}}}}\nabla'_{\cdot} H'(\cdot,*,*)$ and where we used $d\hat{H}=0$.  We can then decompose the equation for $\dot{g}'_{_{\mathcal{E}}}$, analogously to Proposition \ref{p:decGRF}.  We need a preliminary lemma.

\begin{lemma} \label{LEMMADDG1}Let $v_{i} \in T_{m}M$. Then
\begin{align*}
1)&\ \nabla^{L}_{v}q = -\frac{1}{2}g^{-1}(D_{v}DG)_{*}(\cdot,\cdot) + \frac{1}{2}g^{-1}(D_{v}G(\cdot_{1},\cdot_{2})D_{*}G(\cdot_{1},\cdot_{2}))\\
2)&\ (D_{v_{1}}DG)_{v_{2}}(\eta_{1},\eta_{2})=(D_{v_{2}}DG)_{v_{1}}(\eta_{1},\eta_{2}) +G([F(v_{1},v_{2}),\eta_{1}],\eta_{2})  +G(\eta_{1},[F(v_{1},v_{2}),\eta_{2}]),\\
3)&\ (D_{v_{1}}DG)_{v_{2}}(\cdot,\cdot)= (D_{v_{2}}DG)_{v_{1}}(\cdot,\cdot).
\end{align*}
\end{lemma}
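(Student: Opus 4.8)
The plan is to establish the three identities in order, as each builds on the previous. For part 1), I would start from the definition $q = -\tfrac{1}{2} g^{-1} D_{\cdot} G(\cdot, \cdot)$, viewing $q$ as the $g$-metric dual of the one-form $v \mapsto -\tfrac{1}{2} \tr_G D_v G = -\tfrac{1}{2} G^{ij} D_v G_{ij}$. Then $\nabla^L_v q$ is computed by differentiating this one-form with the Levi-Civita connection: the key point is that $\nabla^L$ commutes with the $g$-trace, so $g(\nabla^L_v q, w) = -\tfrac{1}{2} \partial_v \big( G^{ij} D_w G_{ij} \big) + \tfrac{1}{2} G^{ij} D_{\nabla^L_v w} G_{ij}$, and one must carefully apply the product rule, using $\partial_v G^{ij} = -G^{ik} G^{jl} D_v G_{kl}$ and commuting the two base derivatives $D_v$ and $D_w$ (which agree up to the curvature of $D$ on $\mathfrak{G}$, but this curvature contribution is symmetric in the $G$-indices and gets absorbed — or, more cleanly, one evaluates at the center of a normal frame where $\nabla^L_v w = 0$ at the point). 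Collecting terms gives the stated formula, with the $(D_v DG)_*(\cdot,\cdot)$ term coming from the second $D$-derivative and the $D_v G(\cdot_1,\cdot_2) D_* G(\cdot_1,\cdot_2)$ term from differentiating $G^{ij}$.

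For part 2), I would expand $(D_{v_1} D G)_{v_2}(\eta_1,\eta_2) - (D_{v_2} D G)_{v_1}(\eta_1,\eta_2)$ as the curvature of the extended connection $D = D \oplus \nabla^L$ acting on the tensor $G \in \Gamma(\Sym^2 \mathfrak{G}^*)$, i.e. it equals $-(R^D(v_1,v_2) G)(\eta_1,\eta_2)$ plus a term involving the torsion of $D$ paired against $DG$. The curvature of $D$ on $\mathfrak{G}$ is, by Proposition \ref{PROPBRA}(c) together with \eqref{f:Liealgconn}, exactly $\ad$ of the curvature $F(v_1,v_2) \in \mathfrak{G}$ — that is, $R^D(v_1,v_2)\eta = [F(v_1,v_2), \eta]$ up to sign — while the torsion term, involving $D_{[v_1,v_2]_{Lie}} - D_{[v_1,v_2]}$, vanishes since $[v_1,v_2] = [v_1,v_2]_{Lie} - F(v_1,v_2)$ by Proposition \ref{PROPBRA}(d) and $F(v_1,v_2)$ is a section of $\mathfrak{G}$ (so $D$ differentiates $G$ horizontally in the same direction on both sides). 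Acting with $-\ad(F(v_1,v_2))$ by the Leibniz rule on $G(\eta_1,\eta_2)$ produces the two bracket terms $G([F(v_1,v_2),\eta_1],\eta_2) + G(\eta_1,[F(v_1,v_2),\eta_2])$; tracking the sign conventions in Proposition \ref{PROPBRA}(b)--(c) carefully is the fiddly part here.

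Part 3) then follows immediately by taking the $G$-trace over $(\eta_1,\eta_2) = (\cdot,\cdot)$ in part 2): the two bracket terms become $\tr_G G([F(v_1,v_2),\cdot],\cdot) + \tr_G G(\cdot,[F(v_1,v_2),\cdot])$, and since $\mathcal{G}$ is nilpotent, $\ad(F(v_1,v_2))$ is a traceless (indeed nilpotent) endomorphism of each fiber $\mathfrak{G}|_m$, so both traces vanish by Lemma \ref{l:nilpotent}. The main obstacle I anticipate is not conceptual but bookkeeping: getting every sign right in part 1) (the interplay of $\partial_v G^{ij} = -G G D_v G G$ with the two nested $D$-derivatives) and in part 2) (reconciling the $-\{s,[x,y]\}$ sign in Proposition \ref{PROPBRA}(b) with the connection $D_v \eta = [v,\eta]$ and the $-F(v,w)$ in the bracket). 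A clean way to sidestep much of the sign trouble is to work pointwise in a frame for $\mathfrak{G}$ that is $D$-parallel at $m$ and a $g$-normal coordinate frame on $M$, reducing all three identities to elementary computations with the single surviving curvature term $\ad F$.
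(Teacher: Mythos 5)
Your proposal follows essentially the same route as the paper: part 1) is the direct differentiation of the $G$-traced one-form (with the $D_vG\cdot D_*G$ term coming from differentiating $G^{-1}$, equivalently the orthonormal frame, exactly as in the paper), part 2) is the Ricci-identity argument with $R^{D}(v_1,v_2)\eta=-[F(v_1,v_2),\eta]$ obtained from Proposition \ref{PROPBRA} and the Jacobi identity, and part 3) is the $G$-trace of part 2) killed by nilpotency via Lemma \ref{l:nilpotent}. The only stray remark is that no commutation of $D_v$ and $D_w$ is actually needed in part 1) --- your ``cleaner'' pointwise normal-frame computation is precisely what the paper does --- so the proposal is correct as written.
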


\begin{proof} To prove Part 1), first let $e_{i}$ be an orthonormal frame for $\mathcal{E}$ and consider  
\begin{align*}
\nabla^{L}_{v} (DG(e_{i},e_{i}))[w]&= D_{v}(D_{w}G(e_{i},e_{i}))-D_{\nabla^{L}_{v}w}G(e_{i},e_{i})
                   \\ &= (D_{v}DG)_{w}(e_{i},e_{i}) +2D_{w}G(D_{v}e_{i},e_{i})
                   \\ & = (D_{v}DG)_{w}(\cdot,\cdot)- D_{v}G(\cdot_{1},\cdot_{2})D_{w}G(\cdot_{1},\cdot_{2}),
\end{align*}
where we used $D_{w}G(D_{v}e_{i},e_{i}) =-\frac{1}{2}D_{v}G(\cdot_{1},\cdot_{2})D_{w}G(\cdot_{1},\cdot_{2}).$
Part 1) then follows from the relation $\nabla^{L}_{v}q= -\frac{1}{2}g^{-1}\nabla^{L}_{v} (DG(\cdot,\cdot))$. 

For Part 2), we first compute
\begin{align*}
R^{D}(v_{1},v_{2})\eta &= D_{v_{1}}D_{v_{2}}\eta -D_{v_{2}}D_{v_{1}}\eta-D_{[v_{1},v_{2}]_{Lie}}\eta
                       \\ &=[v_{1},[v_{2},\eta]] -[v_{2},[v_{1},\eta]] -[[v_{1},v_{2}]_{Lie},\eta]
                      \\ &  = [v_{1},[v_{2},\eta]] +[v_{2},[\eta,v_{1}]] + [\eta, [v_{1},v_{2}]] -[F(v_{1},v_{2}),\eta]
                       \\ & = -[F(v_{1},v_{2}),\eta],                      
\end{align*}
where we used Proposition \ref{PROPBRA} d) and Proposition \ref{p:Liealgprop} c).

Next consider  the curvature acting on $G$:
\begin{align*} 
-R^{D}(v_{1},v_{2})\cdot G(\eta_{1},\eta_{2})&:= (D_{v_{1}}D_{v_{2}}G -D_{v_{2}}D_{v_{1}}G-D_{[v_{1},v_{2}]_{Lie}}G)(\eta_{1},\eta_{2}) \\&
              =-G(R^{D}(v_{1},v_{2})\eta_{1},\eta_{2}) -G(\eta_{1}, R^{D}(v_{1},v_{2}),\eta_{2})
            \\ &  =G([F(v_{1},v_{2}),\eta_{1}],\eta_{2}) + G(\eta_{1},[F(v_{1},v_{2}),\eta_{2}]).
\end{align*}
Part 2) then follows from the relation $-R^{D}(v_{1},v_{2})\cdot G(\eta_{1},\eta_{2})= (D_{v_{1}}DG)_{v_{2}}(\eta_{1},\eta_{2})-(D_{v_{2}}DG)_{v_{1}}(\eta_{1},\eta_{2})$.

Part 3) then follows from Part 2) and the assumption that $\mathcal{G}$ is nilpotent, which implies  $G([\eta,\cdot],\cdot)=0,$ for $\eta \in \mathfrak{G}$ by Lemma \ref{l:nilpotent}.
\end{proof}

\begin{prop} \label{p:decgfGRF1} Given $(\bar{g}(t), \bar{H}(t))$ an invariant solution of gauge-fixed generalized Ricci flow (\ref{EQGRF3}), one has
\begin{align*}
1)&\ \frac{\partial G'}{\partial t}(\eta_{1},\eta_{2}) = -2Ric^{\nabla'}(\eta_{1},\eta_{2}) -\frac{1}{2}D'_{\cdot_{2}}G'(\cdot_{1},\cdot_{1})D'_{\cdot_{2}}G'(\eta_{1},\eta_{2}) + \frac{1}{2}\mathcal{H}'(\eta_{1},\eta_{2}),\\
2)&\ G'\left(\frac{dA'}{dt}v,\eta \right) = -2Ric^{\nabla'}(v,\eta) +\frac{1}{2}G'(F'(v,\cdot_{2}),\eta)D'_{\cdot_{2}}G'(\cdot_{1},\cdot_{1}) + \frac{1}{2}\mathcal{H}'(v,\eta),\\
3)&\ \frac{\partial g'}{ \partial t}(v_{1},v_{2}) = -2Ric^{\nabla'}(v_{1},v_{2})  -(D'_{v_{1}}D'G')_{v_{2}}(\cdot,\cdot)+ D'_{v_{1}}G'(\cdot_{1},\cdot_{2})D'_{v_{2}}G'(\cdot_{1},\cdot_{2}) + \frac{1}{2}\mathcal{H}'(v_{1},v_{2}).
\end{align*}
\end{prop}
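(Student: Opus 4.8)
The plan is to combine the gauge-corrected flow equations (\ref{f:GFF}) with the earlier decomposition results for $\dot g_{_{\mathcal E}}$, the Ricci curvature (Proposition \ref{p:Ricci}), and the Lie-derivative term $\mathcal{L}_{q'} g'_{_{\mathcal E}}$. The key point is that everything here is the same computation as in Proposition \ref{p:decGRF}, except that the extra term $\mathcal{L}_{q'} g'_{_{\mathcal E}}(e_1,e_2) = g'_{_{\mathcal E}}(\nabla'_{e_1} q', e_2) + g'_{_{\mathcal E}}(e_1, \nabla'_{e_2} q')$ must be evaluated in each of the three components $(\eta_1,\eta_2)$, $(\eta,v)$, $(v_1,v_2)$, and shown to cancel precisely the terms that were inhomogeneous in $\nabla^L q$ (equivalently, in $D_\cdot DG$) in the ungauged equations. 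Since $q' = -\tfrac12 g'^{-1} D'G'(\cdot,\cdot)$ is horizontal, i.e. $q' \in \Gamma(TM) \subset \Gamma(\mathcal E)$, I can drop primes for the computation and work with a fixed metric, then reinstate primes at the end.

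First I would compute the three components of $\mathcal{L}_q g_{_{\mathcal E}}$ using the decomposition of $\nabla$ from Proposition \ref{PROPNABDEC}. For the $(\eta_1,\eta_2)$ component: $\mathcal{L}_q g_{_{\mathcal E}}(\eta_1,\eta_2) = 2 G(\pi_{\mathfrak G}\nabla_{\eta_1} q, \eta_2)$ symmetrized; by part (3) of Proposition \ref{PROPNABDEC}, $\nabla_\eta v = \tfrac12 G^{-1} D_v G(\eta,*) + \tfrac12 g^{-1} G(F(v,*),\eta)$, so with $v = q$ the purely-vertical part is $\tfrac12 G^{-1} D_q G(\eta_1,*)$; pairing against $\eta_2$ and symmetrizing gives $D_q G(\eta_1,\eta_2) = -\tfrac12 D_{\cdot_2} G(\cdot_1,\cdot_1) D_{\cdot_2} G(\eta_1,\eta_2)$ after substituting the definition of $q$. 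For the $(v_1,v_2)$ component the relevant term is $\nabla^L_{v_1} q$, computed in Lemma \ref{LEMMADDG1} part 1), which produces exactly $-(D_{v_1} DG)_{v_2}(\cdot,\cdot) + D_{v_1}G(\cdot_1,\cdot_2) D_{v_2}G(\cdot_1,\cdot_2)$ once $q$ is expanded, using symmetry of $(D_{v_1}DG)_{v_2}(\cdot,\cdot)$ in $v_1,v_2$ from Lemma \ref{LEMMADDG1} part 3). The mixed component is handled similarly, extracting from $\mathcal{L}_q g_{_{\mathcal E}}(\eta,v)$ the term $G(\nabla_\eta q, v) + G(\eta, \nabla_v q)$ and using parts (2) and (3) of Proposition \ref{PROPNABDEC}.

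Next I would subtract: the ungauged flow in Proposition \ref{p:decGRF} has, e.g. in the $\eta_1\eta_2$-equation, the leading term $+(D_\cdot DG)_\cdot(\eta_1,\eta_2)$ together with several quadratic-in-$DG$ and quadratic-in-$F$ and bracket terms; by Proposition \ref{p:Ricci} these quadratic terms are exactly (up to sign) what appears in $-2 Ric^\nabla(\eta_1,\eta_2)$, modulo the $-\tfrac12 D_{\cdot_2}G(\cdot_1,\cdot_1) D_{\cdot_2}G(\eta_1,\eta_2)$ term. So I would rewrite Proposition \ref{p:decGRF}'s right-hand sides in terms of $Ric^\nabla$ using Proposition \ref{p:Ricci}, add the $\mathcal{L}_q g_{_{\mathcal E}}$ contributions just computed, and verify the cancellations termwise. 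The $\dot H$ statement is not part of this proposition, so I only need the three metric components. Reinstating primes throughout gives the claimed formulas.

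The main obstacle is bookkeeping: matching the numerous quadratic terms ($D_{\cdot}G \cdot D_{\cdot}G$ with various trace patterns, $G(F,\cdot)G(F,\cdot)$, and the $\tr_G G([\cdot,\cdot],\cdot)^2$ and $\tr_G G([\cdot,\eta],[\cdot,\eta])$ bracket terms) between Proposition \ref{p:decGRF}, Proposition \ref{p:Ricci}, and the expansion of $\mathcal{L}_q g_{_{\mathcal E}}$, making sure every numerical coefficient ($\tfrac14$ versus $\tfrac12$ versus $\tfrac34$) and every trace slot lines up. The nilpotency hypothesis (via Lemma \ref{l:nilpotent}) is needed to discard the $\tr_G G([\eta,\cdot],\cdot)$-type terms that would otherwise appear when commuting $D$ and the bracket; I would invoke it exactly where Proposition \ref{p:Ricci} and Lemma \ref{LEMMADDG1} part 3) already do. Once the three component identities are assembled, no further argument is required.
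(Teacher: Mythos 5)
Your proposal is correct and takes essentially the same route as the paper: the paper's proof likewise consists of computing the three components of $\mathcal{L}_{q'}g'_{_{\mathcal{E}}}$ using Proposition \ref{PROPNABDEC} and Lemma \ref{LEMMADDG1} and then combining them with Lemma \ref{l:generalvariation} and equation (\ref{f:GFF}). The only differences are cosmetic: your detour through Proposition \ref{p:decGRF} and Proposition \ref{p:Ricci} is unnecessary since (\ref{f:GFF}) already expresses the gauged flow as $-2Ric^{\nabla'}+\tfrac{1}{2}\mathcal{H}'+\mathcal{L}_{q'}g'_{_{\mathcal{E}}}$, so no re-collection or ``cancellation'' is required (indeed the $(D'_{v_1}D'G')_{v_2}$-type terms survive into the statement rather than cancel), and for the mixed $(v,\eta)$ component the relevant decompositions are parts (1) and (3) of Proposition \ref{PROPNABDEC} (for $\nabla_{v}q$ and $\nabla_{\eta}q$), not parts (2) and (3).
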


\begin{proof} We will first compute the different components of $\mathcal{L}_{q'} g'_{_{\mathcal{E}}}(e_{1},e_{2})= g'_{_{\mathcal{E}}}(\nabla'_{e_{1}}q',e_{2}) + g'_{_{\mathcal{E}}}(e_{1}, \nabla'_{e_{2}}q')$. We will be using Proposition \ref{PROPNABDEC} throughout. Moreover, to simplify the notation we will be removing all the primes from the equations.  Let $\eta_{i} \in \mathfrak{G}|_{m}$ and $v_{a} \in T_{m}M$.  First we compute
\begin{align*}
 1) \mathcal{L}_{q} g_{_{\mathcal{E}}}(\eta_{1},\eta_{2}) 
&= G(\pi_{\mathfrak{G}}\nabla_{\eta_{1}}q,\eta_{2}) + (1\leftrightarrow 2)\\
&= \frac{1}{2}G(G^{-1}(D_{q}G)(\eta_{1},*),\eta_{2}) + (1\leftrightarrow 2)\\
&= D_{q}G(\eta_{1},\eta_{2})\\
&= -\frac{1}{2}D_{\cdot_{2}}G(\cdot_{1},\cdot_{1})D_{\cdot_{2}}G(\eta_{1},\eta_{2}).
\end{align*}
Next we compute
\begin{align*}
2) \mathcal{L}_{q} g_{_{\mathcal{E}}}(v,\eta)= &
             g_{_{\mathcal{E}}}(\nabla_{v}q,\eta) + g_{_{\mathcal{E}}}(v,\nabla_{\eta}q)
           \\ &  =-\frac{1}{2}G(F(v,q),\eta)+ \frac{1}{2} g(v, g^{-1}G(F(q,*),\eta))
            \\ &  = G(F(q,v),\eta)
            \\ &= \frac{1}{2}D_{\cdot_{2}}G(\cdot_{1},\cdot_{1})G(F(v,\cdot_{2}),\eta).
\end{align*}
3) Using Lemma \ref{LEMMADDG1}, we have
\begin{align*}
 \mathcal{L}_{q} g_{_{\mathcal{E}}}(v_{1},v_{2}) 
   &= g_{_{\mathcal{E}}}(\nabla_{v_{1}}q,v_{2}) + (1\leftrightarrow 2)\\
   &= g(\nabla^{L}_{v_{1}}q,v_{2}) + (1\leftrightarrow 2)\\
   &=-\frac{1}{2}g(g^{-1}(D_{v_{1}}DG)_{*}(\cdot,\cdot),v_{2}) + \frac{1}{2}g(g^{-1}(D_{v_{1}}G(\cdot_{1},\cdot_{2})D_{*}G(\cdot_{1},\cdot_{2})),v_{2})  + (1\leftrightarrow 2)\\
   &= -\frac{1}{2}(D_{v_{1}}DG)_{v_{2}}(\cdot,\cdot) + \frac{1}{2}D_{v_{1}}G(\cdot_{1},\cdot_{2})D_{v_{2}}G(\cdot_{1},\cdot_{2}) + (1\leftrightarrow 2)\\
   &= -(D_{v_{1}}DG)_{v_{2}}(\cdot,\cdot) + D_{v_{1}}G(\cdot_{1},\cdot_{2})D_{v_{2}}G(\cdot_{1},\cdot_{2})
\end{align*}
The result follows by combining these results with Proposition \ref{l:generalvariation} and Equation \ref{f:GFF}.
\end{proof}

Combining Proposition \ref{p:decgfGRF1} with Theorem \ref{t:curvature} and removing the primes yields the following modified generalized Ricci flow equations:

\begin{prop} \label{p:decgfGRF2} Given $(\bar{g}_t, \bar{H}_t)$ a solution of canonically gauge-fixed generalized Ricci flow (\ref{EQGRF3}), one has
\begin{align*}
1)&\  \frac{\partial G}{ \partial t}(\eta_{1},\eta_{2})= (D_{\cdot}DG)_{\cdot}(\eta_{1},\eta_{2}) -D_{\cdot_{2}}G(\eta_{1},\cdot_{1})D_{\cdot_{2}}G(\cdot_{1},\eta_{2}) \\ &\qquad \qquad \qquad -\frac{1}{2}G(F(\cdot_{1},\cdot_{2}), \eta_{1})G(F(\cdot_{1},\cdot_{2}), \eta_{2}) + \tr_{G}G([\cdot,\eta_{1}],[\cdot,\eta_{2}]) \\ &\qquad \qquad \qquad -\frac{1}{2}\tr_{G}G([\cdot_{1},\cdot_{2}],\eta_{1})G([\cdot_{1},\cdot_{2}],\eta_{2}) + \frac{1}{2}\mathcal{H}(\eta_{1},\eta_{2}),\\
 2)&\ G(\frac{dA}{dt}v, \eta) = -G(D_{\cdot}F(v,\cdot), \eta) -D_{\cdot}G(F(v,\cdot),\eta) +\tr_{G}D_{v}G([\cdot,\eta],\cdot) + \frac{1}{2}\mathcal{H}(v,\eta),\\
3)&\ \frac{\partial g}{ \partial t}(v_{1},v_{2}) = -2Ric^{\nabla^{L}}(v_{1},v_{2}) +\frac{1}{2}D_{v_{1}}G(\cdot_{1},\cdot_{2})D_{v_{2}}G(\cdot_{1},\cdot_{2})\\
&\ \qquad \qquad \qquad +G(F(v_{1},\cdot),F(v_{2},\cdot))+ \frac{1}{2}\mathcal{H}(v_{1},v_{2}),\\
4)&\ \dot{H}= d\dot{B}, \mbox{ where}\\
& \qquad \dot{B} = D_{\cdot}H(\cdot, *,*)  -D_{\cdot_{1}}G(\cdot_{2},\pi_{\mathfrak{G}}*) \wedge H(\cdot_{1},\cdot_{2},*)  \\ 
& \qquad \qquad  -\frac{1}{2}G(F(\cdot_{1},\cdot_{2}), \pi_{\mathfrak{G}}*) \wedge H(\cdot_{1},\cdot_{2},*) +\frac{1}{2}\tr_{G}G([\cdot_{1},\cdot_{2}],\pi_{\mathfrak{G}}*) \wedge H(\cdot_{1},\cdot_{2},*).
\end{align*}
\end{prop}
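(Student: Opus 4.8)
The plan is to obtain the four stated equations by substituting the explicit curvature expressions of Proposition \ref{p:Ricci} (which are themselves consequences of Theorem \ref{t:curvature} together with Lemma \ref{l:nilpotent}) into the intermediate evolution system of Proposition \ref{p:decgfGRF1}, and to treat the three-form equation separately by feeding the decomposition of $-d^{*}H$ from Proposition \ref{p:dstarHdecomp} into the flow equation (\ref{f:GFF}). All computations are carried out with the primed (canonically gauge-fixed) data $(g'_{_{\mathcal{E}}}, H', \nabla', D', \ldots)$; since the structural identities of \S\ref{SECCURV} and \S\ref{s:dimredGRF} hold for an arbitrary invariant metric, they apply verbatim to the primed data, and one drops the primes only at the very end.

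First I would handle the $\mathfrak{G}$-component. Substituting the formula for $Ric^{\nabla}(\eta_{1},\eta_{2})$ from Proposition \ref{p:Ricci} into part 1) of Proposition \ref{p:decgfGRF1}, the term $-\tfrac{1}{4}D_{\cdot_{2}}G(\cdot_{1},\cdot_{1})D_{\cdot_{2}}G(\eta_{1},\eta_{2})$ inside $-2\,Ric^{\nabla}$ produces $+\tfrac{1}{2}D_{\cdot_{2}}G(\cdot_{1},\cdot_{1})D_{\cdot_{2}}G(\eta_{1},\eta_{2})$, which cancels exactly the explicit $-\tfrac{1}{2}D_{\cdot_{2}}G(\cdot_{1},\cdot_{1})D_{\cdot_{2}}G(\eta_{1},\eta_{2})$ term appearing in Proposition \ref{p:decgfGRF1}; the $(D_{\cdot}DG)_{\cdot}$ term, the $F$-squared term, and the two bracket traces then reassemble into the claimed right-hand side. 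For the mixed component one substitutes $Ric^{\nabla}(\eta,v)$ into part 2): after doubling and negating, the term $\tfrac{1}{4}G(\eta,F(v,\cdot_{2}))D_{\cdot_{2}}G(\cdot_{1},\cdot_{1})$ in $Ric^{\nabla}$ cancels the explicit $+\tfrac{1}{2}G(F(v,\cdot_{2}),\eta)D_{\cdot_{2}}G(\cdot_{1},\cdot_{1})$ term of Proposition \ref{p:decgfGRF1} (using the symmetry of $G$), leaving the stated equation. For the $TM$-component, substituting $Ric^{\nabla}(v_{1},v_{2})$ into part 3), the explicit $-(D_{v_{1}}DG)_{v_{2}}(\cdot,\cdot)$ and $D_{v_{1}}G(\cdot_{1},\cdot_{2})D_{v_{2}}G(\cdot_{1},\cdot_{2})$ combine with the $+\tfrac{1}{2}(D_{v_{1}}DG)_{v_{2}}(\cdot,\cdot)$ and $-\tfrac{1}{2}D_{v_{1}}G(\cdot_{1},\cdot_{2})D_{v_{2}}G(\cdot_{1},\cdot_{2})$ arising from $-2\,Ric^{\nabla}$ to produce the stated coefficients, while $Ric^{\nabla^{L}}$ and $G(F(v_{1},\cdot),F(v_{2},\cdot))$ pass through unchanged.

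For the three-form equation, I would start from $\dot{H}' = -d'd'^{*}H' + d'i_{q'}H'$ in (\ref{f:GFF}), so that $\dot{H} = d\dot{B}$ with $\dot{B} = -d^{*}H + i_{q}H$. Applying Proposition \ref{p:dstarHdecomp} to $-d^{*}H$ and noting, from Definition \ref{d:qdef}, that $i_{q}H(*,*) = H(q,*,*) = -\tfrac{1}{2}D_{\cdot_{1}}G(\cdot_{2},\cdot_{2})H(\cdot_{1},*,*)$, the second term of the expansion of $-d^{*}H$ is exactly cancelled by $i_{q}H$; the four surviving terms are precisely the stated $\dot{B}$. Dropping primes then yields all four equations of the proposition.

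The main obstacle is purely one of bookkeeping: correctly matching the abstract trace notation of \S\ref{SECCURV} (distinguishing the traced slots $\cdot$ from the free slots $*$, and keeping $\mathfrak{G}$-traces taken with $G$ separate from $TM$-traces taken with $g$), invoking the symmetry of $G$ when pairing $F$-terms, and confirming each of the sign cancellations described above. No genuinely new analytic input is required, since all of the substantial computations have already been carried out in Theorem \ref{t:curvature}, Proposition \ref{p:Ricci}, Proposition \ref{p:dstarHdecomp}, and Proposition \ref{p:decgfGRF1}.
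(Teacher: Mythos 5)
Your proposal is correct and follows essentially the same route as the paper: Parts 1)--3) are obtained by substituting Proposition \ref{p:Ricci} (i.e.\ Theorem \ref{t:curvature} combined with Lemma \ref{l:nilpotent}) into Proposition \ref{p:decgfGRF1} and tracking the stated cancellations, and Part 4) is exactly the paper's argument via (\ref{f:GFF}), Proposition \ref{p:dstarHdecomp}, and the relation $H(q,*,*)=-\tfrac{1}{2}D_{\cdot_{1}}G(\cdot_{2},\cdot_{2})H(\cdot_{1},*,*)$. The only blemish is a bookkeeping slip in your narration of Part 3): $-2\,Ric^{\nabla}(v_{1},v_{2})$ contributes $+(D_{v_{1}}DG)_{v_{2}}(\cdot,\cdot)$, not $+\tfrac{1}{2}(D_{v_{1}}DG)_{v_{2}}(\cdot,\cdot)$, which is precisely what makes this term cancel completely against the explicit $-(D_{v_{1}}DG)_{v_{2}}(\cdot,\cdot)$ so that no second-derivative term of $G$ survives in the stated equation.
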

\begin{proof} The proofs of Parts 1) -- 3) were already given. The proof of Part 4) follows from Equation \ref{f:GFF}, Proposition \ref{p:dstarHdecomp} and the relation $H(q,*,*)=-\frac{1}{2}D_{\cdot_{1}}G(\cdot_{2},\cdot_{2})H(\cdot_{1},*,*)$.
\end{proof}

\subsubsection{Invariant generalized Ricci flow in a general gauge}

In addition to the canonical gauge fixing of the previous section, we will need to consider general gauge modifications by gradient vector fields.  This is key to analyzing the functionals in Section \ref{s:energy}.  In particular, suppose $(\bar{g}(t),\bar{H}(t))$ is a solution of the generalized Ricci flow. Given $f(t) \in C^{\infty}(M)$ a one parameter family of smooth functions, let $q_f(t)=-\frac{1}{2}g^{-1}DG(\cdot,\cdot) -\nabla f(t)$, $X_f(t) = \gd(t)^{-1} \pi^* q_f(t)$.  Pulling back by the flow generated by $X_f(t)$, as in Section \ref{ss:CGF}, we obtain the following modified generalized Ricci flow equations:
 \begin{equation} \label{f:gengfGRF}
\frac{\partial \bar{g}}{\partial t} =\ -2Ric(\bar{g}) + \frac{1}{2} \bar{\mathcal{H}} + \mathcal{L}_{X_f}\bar{g} \hspace{15mm}
\frac{\partial \bar{H}}{\partial t}  =\ \Delta_{\bar{g}} \bar{H} + di_{X_f}\bar{H},
\end{equation}
An analysis similar to that of Propositions \ref{p:decgfGRF1} and \ref{p:decgfGRF2} yields the relevant decomposed evolution equations.

\begin{prop} \label{p:decgengfGRF} Given $f_t$ a one parameter family of smooth functions and $(\bar{g}_t, \bar{H}_t)$ a solution of (\ref{f:gengfGRF}), one has
\begin{align*}
1)&\ \frac{\partial G}{ \partial t}(\eta_{1},\eta_{2}) = (D_{\cdot}DG)_{\cdot}(\eta_{1},\eta_{2}) -D_{\cdot_{2}}G(\eta_{1},\cdot_{1})D_{\cdot_{2}}G(\cdot_{1},\eta_{2}) \\ 
&\ \qquad \qquad \qquad -\frac{1}{2}G(F(\cdot_{1},\cdot_{2}), \eta_{1})G(F(\cdot_{1},\cdot_{2}), \eta_{2}) + \tr_{G}G([\cdot,\eta_{1}],[\cdot,\eta_{2}]) \\ 
&\ \qquad \qquad \qquad -\frac{1}{2}\tr_{G}G([\cdot_{1},\cdot_{2}],\eta_{1})G([\cdot_{1},\cdot_{2}],\eta_{2}) + \frac{1}{2}\mathcal{H}(\eta_{1},\eta_{2}) -D_{\nabla f}G(\eta_{1},\eta_{2}),\\
2)&\ G(\frac{dA}{dt}v, \eta) = -G(D_{\cdot}F(v,\cdot), \eta) -D_{\cdot}G(F(v,\cdot),\eta) +\tr_{G}D_{v}G([\cdot,\eta],\cdot)\\
&\ \qquad \qquad \qquad + \frac{1}{2}\mathcal{H}(v,\eta) -G(F(\nabla f, v), \eta),\\
3)&\ \frac{\partial g}{ \partial t}(v_{1},v_{2}) = -2Ric^{\nabla^{L}}(v_{1},v_{2}) +\frac{1}{2}D_{v_{1}}G(\cdot_{1},\cdot_{2})D_{v_{2}}G(\cdot_{1},\cdot_{2}) +G(F(v_{1},\cdot),F(v_{2},\cdot)) \\ 
&\ \qquad \qquad \qquad + \frac{1}{2}\mathcal{H}(v_{1},v_{2}) -2\N^2 f(v_{1},v_{2}),\\
4)&\ \dot{H}= d\dot{B}, \mbox{ where }\\
&\ \qquad \dot{B} =  D_{\cdot}H(\cdot, *,*)  -D_{\cdot_{1}}G(\cdot_{2},\pi_{\mathfrak{G}}*) \wedge H(\cdot_{1},\cdot_{2},*)  -\frac{1}{2}G(F(\cdot_{1},\cdot_{2}), \pi_{\mathfrak{G}}*) \wedge H(\cdot_{1},\cdot_{2},*) \\ 
&\ \qquad \qquad +\frac{1}{2}\tr_{G}G([\cdot_{1},\cdot_{2}],\pi_{\mathfrak{G}}*) \wedge H(\cdot_{1},\cdot_{2},*) -H(\nabla f, *,*).
\end{align*}
\end{prop}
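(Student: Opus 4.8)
The plan is to reduce the gauge-modified flow (\ref{f:gengfGRF}) to the base $M$ exactly as in the derivation of Propositions \ref{p:decgfGRF1} and \ref{p:decgfGRF2}, and then to isolate the extra terms produced by the additional gradient piece $-\N f$ in the gauge vector field.  First I would note that $q_f = q - \N f$ and $X_f = \gd^{-1}\pi^* q_f$, so the argument of \S\ref{ss:CGF} identifying the pullback under the flow of $X_f$ with the term $\mathcal{L}_{q_f}g_{_{\mathcal{E}}}$ on the base applies verbatim; hence the dimensional reduction of (\ref{f:gengfGRF}) reads
\[
\dot{g}_{_{\mathcal{E}}} = -2 Ric^{\nabla} + \tfrac{1}{2}\mathcal{H} + \mathcal{L}_{q_f} g_{_{\mathcal{E}}}, \qquad \dot{H} = d\bigl(-d^* H + i_{q_f} H\bigr).
\]
Writing $\N f$ also for the section $0 \oplus \N f$ of $\mathcal{E} = \mathfrak{G}\oplus TM$, we have $\mathcal{L}_{q_f} g_{_{\mathcal{E}}} = \mathcal{L}_{q} g_{_{\mathcal{E}}} - \mathcal{L}_{\N f} g_{_{\mathcal{E}}}$ with $\mathcal{L}_{\N f} g_{_{\mathcal{E}}}(e_1,e_2) := g_{_{\mathcal{E}}}(\nabla_{e_1}\N f, e_2) + g_{_{\mathcal{E}}}(e_1, \nabla_{e_2}\N f)$, and $i_{q_f}H = i_q H - i_{\N f}H$.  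The $q$ and $H$ pieces are precisely those treated in Propositions \ref{p:decgfGRF2} and \ref{p:dstarHdecomp}, so it remains only to compute the two correction tensors $-\mathcal{L}_{\N f}g_{_{\mathcal{E}}}$ and $-i_{\N f}H$.

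The key step is to decompose $\mathcal{L}_{\N f}g_{_{\mathcal{E}}}$ into its $\mathfrak{G}$--$\mathfrak{G}$, $\mathfrak{G}$--$TM$ and $TM$--$TM$ components using Proposition \ref{PROPNABDEC}.  For $\eta_1,\eta_2 \in \mathfrak{G}$ only part 3) contributes: $\nabla_{\eta_i}(\N f) = \tfrac{1}{2}G^{-1}D_{\N f}G(\eta_i,*) + \tfrac{1}{2}g^{-1}G(F(\N f,*),\eta_i)$, whose $\mathfrak{G}$-component pairs with $\eta_j$ to give $\tfrac12 D_{\N f}G(\eta_1,\eta_2)$ from each term, hence $\mathcal{L}_{\N f}g_{_{\mathcal{E}}}(\eta_1,\eta_2) = D_{\N f}G(\eta_1,\eta_2)$.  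For a mixed pair $(v,\eta)$ both parts 1) and 3) enter, via $\nabla_v(\N f) = \nabla^{L}_v \N f - \tfrac12 F(v,\N f)$ and the formula for $\nabla_\eta(\N f)$ above; pairing and using $F(v,\N f) = -F(\N f,v)$ collapses the result to $\mathcal{L}_{\N f}g_{_{\mathcal{E}}}(v,\eta) = G(F(\N f, v),\eta)$.  For $(v_1,v_2)$ part 1) gives $\nabla_{v_i}(\N f) = \nabla^{L}_{v_i}\N f - \tfrac12 F(v_i,\N f)$, whose $TM$-component pairs with $v_j$ to yield $\mathcal{L}_{\N f}g_{_{\mathcal{E}}}(v_1,v_2) = 2\N^2 f(v_1,v_2)$.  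Finally $i_{\N f}H = H(\N f,*,*)$.  Feeding in $\dot{g}_{_{\mathcal{E}}}(\eta_1,\eta_2) = \tfrac{\partial G}{\partial t}(\eta_1,\eta_2)$, $\dot{g}_{_{\mathcal{E}}}(v,\eta) = G(\tfrac{dA}{dt}v,\eta)$ and $\dot{g}_{_{\mathcal{E}}}(v_1,v_2) = \tfrac{\partial g}{\partial t}(v_1,v_2)$ from Lemma \ref{l:generalvariation}, and subtracting these corrections from the corresponding identities of Proposition \ref{p:decgfGRF2}, yields the four asserted equations, the new terms being exactly $-D_{\N f}G(\eta_1,\eta_2)$, $-G(F(\N f,v),\eta)$, $-2\N^2 f(v_1,v_2)$ and $-H(\N f,*,*)$.

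The only genuine subtlety is the mixed component: one must carry both $\nabla_v(\N f)$ (producing an $F$-term through part 1) of Proposition \ref{PROPNABDEC}) and $\nabla_\eta(\N f)$ (producing an $F$-term through part 3)) and check that the two contributions add rather than cancel, which is where a sign slip would be easiest; otherwise everything -- the reduction of (\ref{f:gengfGRF}) to (\ref{f:GFF})-type equations and the combination of $-2Ric^{\nabla}$ with $\mathcal{L}_q g_{_{\mathcal{E}}}$ -- is bookkeeping already carried out in Propositions \ref{p:decgfGRF1} and \ref{p:decgfGRF2}.
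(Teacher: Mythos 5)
Your proposal is correct and is exactly the analysis the paper has in mind: the paper omits the details, saying only that ``an analysis similar to that of Propositions \ref{p:decgfGRF1} and \ref{p:decgfGRF2} yields the relevant decomposed evolution equations,'' and your reduction plus the componentwise computation of $\mathcal{L}_{\N f} g_{_{\mathcal{E}}}$ and $i_{\N f}H$ via Proposition \ref{PROPNABDEC} (including the correct sign bookkeeping in the mixed $(v,\eta)$ component, where the two $F$-contributions add to give $G(F(\N f,v),\eta)$) supplies precisely those details.
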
 

\section{Energy and entropy functionals} \label{s:energy}
\subsection{Conjugate heat equation}

A key component of Perelman's energy and entropy monotonicity is the coupling of Ricci flow to a solution of its associated conjugate heat equation, which in the relevant gauge fixes a background measure on the given manifold.  In our setting we are forced to work on the base space of a given principal bundle, and so the relevant conjugate heat equation should be designed to fix a background measure on this base space.  This is partly responsible for the fact that the evolution equations and monotonicity formulas to follow are not direct consequences of the existing formal monotonicity formulas on the total space of the bundle.

\begin{defn} \label{d:conjugateheat} Given $(\bar{g}_t, \bar{H}_t)$ a solution to generalized Ricci flow, we say that $u_t \in C^{\infty}(M)$ is a solution of the \emph{conjugate heat equation} if
\begin{align} \label{f:conjugateheat}
\dt u =&\ - \gD u + \left(R_g - \tfrac{1}{4} \brs{D G}^2 - \tfrac{1}{2} \brs{F}^2 - \tfrac{1}{4} \tr_g \mathcal H + \tfrac{1}{2} \IP{q, \N \log u} \right)u.
\end{align}
\end{defn}

More precisely, this equation arises as the heat equation conjugate to the generalized Ricci flow as expressed in the canonical gauge, then pulled back to the ungauged generalized Ricci flow.  We note that in the case of the steady entropy functional $\FF$ we obtain the potential function $f$ by $u = e^{-f}$, while for the  expander entropy one has $u = \frac{e^{-f}}{(4 \pi t)^{\frac{n}{2}}}$.

\subsection{Energy functional}

\begin{defn} \label{d:energy} Given $\bar{g}, \bar{H}$ as above and $f \in C^{\infty}(M)$, let
\begin{gather}
\begin{split}
\mathcal{F}(\bar{g},\bar{H},f) =&\ \int_{M} \left\{ |\N f|^{2}+R_{g} -\tfrac{1}{4} |DG|^{2} -\tfrac{1}{4} |F|^{2} -\frac{1}{12} |\bar{H}|_{\bar{g}}^{2} -\tfrac{1}{4} \brs{[,]}^2 \right\} e^{-f} dV_g.
\end{split}
\end{gather}
\end{defn}

We now proceed to compute the evolution of $\FF$ along a solution to generalized Ricci flow.  This will require a buildup of general variational lemmas.

\begin{lemma} \label{l:variation1} 
 Let $\eta_{i} \in \Gamma(\mathfrak{G})$ and $v_{a}\in \Gamma(TM)$.
\begin{align*}
&1) \frac{d}{dt}(D_{v}\eta)= -[\frac{dA}{dt}v,\eta]
\\&2) \frac{dDG}{dt}(v,\eta_{1},\eta_{2})= (D_{v}\frac{dG}{dt})(\eta_{1},\eta_{2}) + G([\frac{dA}{dt}v,\eta_{1}],\eta_{2}) + G(\eta_{1}, [\frac{dA}{dt}v,\eta_{2}]).
\\&3) \frac{dF}{dt}(v_{1},v_{2})= (d^{D}\frac{dA}{dt})(v_{1},v_{2}).
\end{align*}

\end{lemma}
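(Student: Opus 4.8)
The plan is to reduce all three identities to a single observation: the adjoint bundle $\mathfrak{G} = P \times_{\mathrm{Ad}} \mathfrak{g}$, together with its space of sections, carries no time dependence, so the \emph{only} $t$-dependence in the algebroid connection $D$ on $\mathfrak{G}$ — equivalently in the bracket $[v,\eta] = D_v\eta$, and in the curvature $F$ — enters through the time-varying principal connection $\bar{A}(t)$. Having isolated the time dependence this way, each identity becomes a short computation in a local trivialization. The crucial technical point is to work from the formula in Proposition \ref{PROPBRA}(c) rather than from the abstract definition of $D$ via the isomorphisms $\delta(t)$, since the former exhibits $D_v\eta$ in terms of $s^*\bar{A}$ and otherwise fixed data.

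For Part 1), fix $m \in M$, a local section $s \colon U \to P$ near $m$, and write $\eta = \{s, x\}$ with $x$ a \emph{fixed} ($t$-independent) smooth $\mathfrak{g}$-valued function on $U$ — legitimate precisely because $\mathfrak{G}$ is fixed in $t$ — and take $v$ a fixed vector field. By Proposition \ref{PROPBRA}(c), $D_v\eta = \{s, [(s^*\bar{A})v, x]\}$, so differentiating in $t$, with $s$, $x$, $v$ and the Lie bracket on $\mathfrak{g}$ all time-independent, gives $\frac{d}{dt}(D_v\eta) = \{s, [(s^*\frac{d\bar{A}}{dt})v, x]\}$. Since $\frac{d\bar{A}}{dt}$ is automatically horizontal and equivariant (being a derivative of principal connections), under the correspondence of \S\ref{SECTDC} its local representative $s^*\frac{d\bar{A}}{dt}$ satisfies $\frac{dA}{dt}\,v = \{s, (s^*\frac{d\bar{A}}{dt})v\}$; applying Proposition \ref{PROPBRA}(b) then yields $\frac{d}{dt}(D_v\eta) = \{s, [(s^*\frac{d\bar{A}}{dt})v, x]\} = -[\,\{s,(s^*\frac{d\bar{A}}{dt})v\},\, \{s,x\}\,] = -[\frac{dA}{dt}v,\eta]$, as claimed.

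Parts 2) and 3) then follow quickly. For Part 2), expand $DG(v,\eta_1,\eta_2) = v[G(\eta_1,\eta_2)] - G(D_v\eta_1,\eta_2) - G(\eta_1,D_v\eta_2)$ with $v,\eta_1,\eta_2$ chosen time-independent, differentiate in $t$ by the Leibniz rule, group the three terms not containing $\frac{d}{dt}D_v\eta_i$ into $(D_v\frac{dG}{dt})(\eta_1,\eta_2)$, and use Part 1) to rewrite the remaining two terms as $G([\frac{dA}{dt}v,\eta_1],\eta_2) + G(\eta_1,[\frac{dA}{dt}v,\eta_2])$. For Part 3), I would use the classical formula for the variation of curvature along a path of principal connections: with $a(t) := \frac{d\bar{A}}{dt}$ one has $\frac{d}{dt}\bar{F} = \frac{d}{dt}(d\bar{A} + \tfrac{1}{2}[\bar{A},\bar{A}]) = da + [\bar{A},a] = d^{\bar{A}} a$, which is checked by pulling back along a local section, where $s^*\bar{F} = d(s^*\bar{A}) + \tfrac{1}{2}[s^*\bar{A}, s^*\bar{A}]$; passing to the corresponding objects on $M$, the exterior covariant derivative $d^{\bar{A}}$ becomes $d^D$, giving $\frac{dF}{dt} = d^D\frac{dA}{dt}$. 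The only step calling for real care is Part 1) — keeping the $P$-to-$M$ identifications and the sign convention of Proposition \ref{PROPBRA}(b) straight, and making sure the chosen local data $x$ for $\eta$ is genuinely time-independent so that all $t$-dependence sits in $s^*\bar{A}(t)$; Parts 2) and 3) are then routine.
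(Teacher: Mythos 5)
Your proposal is correct and follows essentially the same route as the paper: Part 1) by reducing to $\eta = \{s,x\}$ and differentiating the local formula of Proposition \ref{PROPBRA}(c), then converting with Proposition \ref{PROPBRA}(b); Part 2) by the Leibniz rule applied to $DG(v,\eta_1,\eta_2)=v[G(\eta_1,\eta_2)]-G(D_v\eta_1,\eta_2)-G(\eta_1,D_v\eta_2)$ together with Part 1). For Part 3) the paper simply cites it as standard, and your variation-of-curvature computation $\frac{d}{dt}\bar F = d^{\bar A}\frac{d\bar A}{dt}$ is exactly the standard argument being invoked.
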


\begin{proof}
1) It suffices to prove part 1) when $\eta=\{s,x\} \in \Gamma(\mathfrak{G})$, where $s:U \rightarrow P $ is a local section and $x \in \mathfrak{g}$. For this let $p= s(m)$, for $m \in M$ and consider 
\begin{align*}
\frac{d}{dt}D_{v}\{s,x\}|_{m}&= \frac{d}{dt}\{s, [(s^{*}\bar{A})v,x] \} |_{m}
                   \\&=\{p, [\frac{d\bar{A}}{dt}s_{*}v|_{m},x] \} = -[\{p,\frac{d\bar{A}}{dt}s_{*}v|_{m} \}, \{p,x \}]
                   \\&=-[\frac{dA}{dt}v,\eta]|_{m},
\end{align*}
as claimed.  For part 2) we directly compute
\begin{align*}
 \frac{dDG}{dt}(v,\eta_{1},\eta_{2})=& \frac{d}{dt}(v[G(\eta_{1},\eta_{2})] -G(D_{v}\eta_{1}, \eta_{2}) -G(\eta_{1},D_{v}\eta_{2}))
\\&=v[\frac{dG}{dt}(\eta_{1},\eta_{2})] - \frac{dG}{dt}(D_{v}\eta_{1}, \eta_{2 }) -\frac{dG}{dt}(\eta_{1},D_{v}\eta_{2})\\& \ \ \  -G(\frac{d}{dt}D_{v}\eta_{1},\eta_{2}) -G(\eta_{1},\frac{d}{dt}D_{v}\eta_{2})
\\& =(D_{v}\frac{dG}{dt})(\eta_{1},\eta_{2}) + G([\frac{dA}{dt}v,\eta_{1}],\eta_{2}) +G(\eta_{1},[\frac{dA}{dt}v,\eta_{2}]).
\end{align*}

Part 3) is a standard fact and we omit the proof.

\end{proof}

\begin{lemma} \label{l:variation2}
\begin{align*}
\hspace*{-2.7cm}1) \ 2D_{\cdot_{3}}\frac{dG}{dt}(\cdot_{1},\cdot_{2})D_{\cdot_{3}}G(\cdot_{1},\cdot_{2}) &=<\frac{dG}{dt}, -2(D_{\cdot}DG)_{\cdot} + 4D_{\cdot_{2}}G(\cdot_{1},*)D_{\cdot_{2}}G(\cdot_{1},*)> \\& -2d^{*}(\frac{dG}{dt}(\cdot_{1},\cdot_{2})DG(\cdot_{1},\cdot_{2})).
\end{align*}
2) For $\alpha  \in \Gamma(T^{*}M \otimes \mathfrak{G})$,
\begin{align*}
 G(d^{D}\alpha(\cdot_{1},\cdot_{2}),F(\cdot_{1},\cdot_{2}))
 &=2<\alpha, (d^{D})^{*}F> -2<\alpha(*),G^{-1}D_{\cdot_{1}}G(\diamond,F(\cdot_{1},*))> \\ & \ \ \ -2d^{*}(G(\alpha(\cdot),F(*,\cdot))).
\end{align*}
\end{lemma}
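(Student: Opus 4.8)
The plan is to prove both identities by straightforward integration by parts on $M$, using the extended connection $D = D \oplus \nabla^L$ and its compatibility with the metrics $G$ and $g$, keeping careful track of the divergence terms that must appear because $D$ need not be metric-compatible in the naive sense but the construction of $q$ and the Levi-Civita piece make the relevant contractions behave well.

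For Part 1), I would start from the Leibniz rule for $\nabla^L$ applied to the scalar function $x \mapsto D_{\cdot_3} G(\cdot_1,\cdot_2)\, D_{\cdot_3} \tfrac{dG}{dt}(\cdot_1,\cdot_2)$ written with respect to $g$-orthonormal frames, and expand the divergence $d^*\bigl(\tfrac{dG}{dt}(\cdot_1,\cdot_2) DG(\cdot_1,\cdot_2)\bigr)$ using $d^* \alpha = -\tr_g \nabla^L_\cdot \alpha(\cdot)$. The key move is to commute $\nabla^L$ past the first-order operator $D$ acting on $\tfrac{dG}{dt}$: this produces the term $\langle \tfrac{dG}{dt}, (D_\cdot D G)_\cdot\rangle$ together with terms where a derivative lands on a frame element, which by the compatibility relation $D_w G(D_v e_i, e_i) = -\tfrac12 D_v G(\cdot_1,\cdot_2) D_w G(\cdot_1,\cdot_2)$ (exactly as used in the proof of Lemma \ref{LEMMADDG1}) convert into the quadratic term $D_{\cdot_2} G(\cdot_1,*) D_{\cdot_2} G(\cdot_1,*)$ contracted against $\tfrac{dG}{dt}$. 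Collecting coefficients and rearranging gives the stated identity; the factor of $2$ and the sign on the divergence are bookkeeping.

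For Part 2), I would expand $G(d^D\alpha(\cdot_1,\cdot_2), F(\cdot_1,\cdot_2))$ using $d^D\alpha(v_1,v_2) = D_{v_1}\alpha(v_2) - D_{v_2}\alpha(v_1) - \alpha([v_1,v_2]_{Lie})$ — note $\alpha$ is $\mathfrak{G}$-valued so $D$ here means the $\mathfrak{G}$-connection induced by the algebroid — then integrate by parts the term $G(D_{\cdot_1}\alpha(\cdot_2), F(\cdot_1,\cdot_2))$ to move the derivative onto $F$. Since $D$ is not $G$-parallel, moving the derivative off $\alpha$ picks up a term $G(\alpha(\cdot_2), (D_{\cdot_1} G)(\diamond, F(\cdot_1,\cdot_2)))$, which after raising an index with $G^{-1}$ is precisely the middle term $\langle \alpha(*), G^{-1} D_{\cdot_1}G(\diamond, F(\cdot_1,*))\rangle$, plus the divergence term $d^*\bigl(G(\alpha(\cdot), F(*,\cdot))\bigr)$. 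The antisymmetrized derivatives of $F$ reassemble into $(d^D)^*F$ (the codifferential of $F$ as a $\mathfrak{G}$-valued two-form), up to the same index-correction term; careful combination yields the factor $2$ and the stated form.

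The main obstacle I anticipate is handling the failure of $D$ to be metric-compatible cleanly: unlike the usual situation where integration by parts of $\langle D\alpha, \beta\rangle$ produces only $\langle \alpha, D^*\beta\rangle$ plus a pure divergence, here there are extra terms coming from $DG \neq 0$, and one must verify that all such terms either cancel against each other or combine into exactly the middle term in each identity rather than leaving a residue. A second, more mechanical source of difficulty is ensuring the symmetrizations and antisymmetrizations in the abstract trace notation (the $\cdot_i$, $*$, $\diamond$ slots) are tracked correctly so that the numerical coefficients come out as stated; I would do this by writing everything once in explicit index notation relative to orthonormal frames, performing the integration by parts, and then translating back.
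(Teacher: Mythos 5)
Your plan is correct and is essentially the paper's own argument: both identities are obtained by expanding, in orthonormal frames, the divergence of the natural one-form ($\frac{dG}{dt}(\cdot_{1},\cdot_{2})DG(\cdot_{1},\cdot_{2})$ for Part 1, $G(\alpha(\cdot),F(*,\cdot))$ for Part 2) via $d^{*}\phi=-\tr_{g}\nabla^{L}_{\cdot}\phi(\cdot)$, and converting the frame-derivative and $DG$-correction terms into the quadratic middle terms exactly as you describe. The only caveats are cosmetic: the term $\langle \frac{dG}{dt},(D_{\cdot}DG)_{\cdot}\rangle$ arises simply from the derivative landing on $DG$ (no commutation of operators is needed), and the compatibility relation you cite is used with $\frac{dG}{dt}$ inserted in place of one trace rather than in its fully traced form.
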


\begin{proof}
 1) Let $\phi= \frac{dG}{dt}(\cdot_{1},\cdot_{2})DG(\cdot_{1},\cdot_{2})$. We will compute $d^{*}\phi$.  Letting $\{\eta_{i}\}$ be an orthonormal frame for $\mathfrak{G}$, first consider 
\begin{align*}
\nabla^{L}\phi &= \nabla^{L}(\frac{dG}{dt}(\eta_{i},\eta_{j})DG(\eta_{i},\eta_{j}))
                       \\ &= D\frac{dG}{dt}(\eta_{i},\eta_{j})DG(\eta_{i},\eta_{j}) 
                              +\frac{dG}{dt}(\eta_{i},\eta_{j})(DDG)(\eta_{i},\eta_{j})
                        \\& \ \ \ +2\frac{dG}{dt}(D\eta_{i},\eta_{j})DG(\eta_{i},\eta_{j}) 
                         +2\frac{dG}{dt}(\eta_{i},\eta_{j})DG(D\eta_{i},\eta_{j})
                         \\& = D\frac{dG}{dt}(\cdot_{1},\cdot_{2})DG(\cdot_{1},\cdot_{2}) 
                                 +\frac{dG}{dt}(\cdot_{1},\cdot_{2})(DDG)(\cdot_{1},\cdot_{2}) 
                            \\ & \ \ \ -2\frac{dG}{dt}(\cdot_{1},\cdot_{2})DG(\cdot_{3},\cdot_{2})DG(\cdot_{3},\cdot_{1}).
\end{align*}

Using this, gives
\begin{align*}
d^{*}\phi &= -\nabla^{L}_{\cdot}\phi(\cdot)
              \\&=-D_{\cdot_{3}}\frac{dG}{dt}(\cdot_{1},\cdot_{2})D_{\cdot_{3}}G(\cdot_{1},\cdot_{2})
               -<\frac{dG}{dt},(D_{\cdot}DG)_{\cdot}>  +2<\frac{dG}{dt},D_{\cdot_{2}}G(\cdot_{1},*)D_{\cdot_{2}}G(\cdot_{1},*)>,
\end{align*}
which proves the first part of the lemma.

2) Letting $\{v_{a}\}$ be an orthonormal frame for $TM$, consider 
\begin{align*}
   G(d^{D}\alpha(\cdot_{1},\cdot_{2}),F(\cdot_{1},\cdot_{2}))
                & =2G(D_{\cdot_{1}}\alpha (\cdot_{2}), F(\cdot_{1},\cdot_{2}))
                 \\ &  =2 v_{a}[G(\alpha(v_{b}),F(v_{a},v_{b}))] -2D_{v_{a}}G(\alpha(v_{b}),  F(v_{a},v_{b}) )
              \\ & \ \ \ -2G(\alpha(\nabla^{L}_{v_{a}}v_{b}), F(v_{a},v_{b}) ) -2G(\alpha(v_{b}), D_{v_{a}}F(v_{a},v_{b}))
                    \\ & \ \ \ -2G( \alpha(v_{b}), F(\nabla^{L}_{v_{a}} v_{a}, v_{b}) + F(v_{a}, \nabla^{L}_{v_{a}}v_{b}))
                \\ & =-2G(\alpha (v_{b}), D_{v_{a}}F(v_{a},v_{b})) -2D_{v_{a}}G(\alpha(v_{b}), F(v_{a},v_{b})) 
                  \\ & \ \ \   -2d^{*}(G(\alpha(v_{a}),F(*,v_{a})   )    )
                \\ &  = 2<\alpha, (d^{D})^{*}F> -2<\alpha(*),G^{-1}D_{\cdot_{1}}G(\diamond,F(\cdot_{1},*))> \\ & \ \ \ -2d^{*}(G(\alpha(\cdot),F(*,\cdot))).
\end{align*}

\end{proof}

\begin{lemma} \label{l:variation3} 
\begin{align*}
& 1)  \frac{d|H|^{2}}{dt}= -3<\dot{g_{_{\mathcal{E}}}}, \mathcal{H}> +2<\dot{H},H>.
 \\& 2) \ d^{*}(\dot{B}(\cdot_{1},\cdot_{2})H(\pi_{TM}*,\cdot_{1},\cdot_{2})) = -D_{\cdot_{3}}\dot{B}(\cdot_{1},\cdot_{2})H(\cdot_{3},\cdot_{1},\cdot_{2})  - \dot{B}(\cdot_{1},\cdot_{2})D_{\cdot_{3}}H(\cdot_{3},\cdot_{1},\cdot_{2}) \\ & \hspace{5.4cm}+2 \dot{B}(\cdot_{1},\cdot_{3})H(\cdot_{4},\cdot_{2},\cdot_{3})D_{\cdot_{4}}G(\cdot_{1},\cdot_{2}).
\\& 3) <d\dot{B},H>= -3<\dot{B}, -d^{*}H + i_{q}H> -3d^{*}(\dot{B}(\cdot_{1},\cdot_{2})H(*,\cdot_{1},\cdot_{2})).
 \end{align*} 
\end{lemma}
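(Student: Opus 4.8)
The plan is to handle the three parts separately. Part 1) is a routine variational formula, cleanest seen by pulling back to $P$: rather than differentiate $\brs{H}^{2}=\brs{H}^{2}_{g_{_{\EE}}}$ directly on $M$ — where the time-dependence of $\gd(t)$ makes the naive $t$-derivatives of $g_{_{\EE}}$ and $H$ differ from $\dot g_{_{\EE}}$ and $\dot H$ — I would use that $\brs{H}^{2}_{g_{_{\EE}}}=\brs{\bar H}^{2}_{\bar g}$ as functions related by $\pi$ (each $\gd_{p}$ being a linear isometry), so that $\tfrac{d}{dt}\brs{H}^{2}=\tfrac{\del}{\del t}\brs{\bar H}^{2}_{\bar g}$ since $\pi$ is $t$-independent. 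The standard variation of the squared norm of a three-form on $(P,\bar g)$ gives $2\IP{\tfrac{\del\bar H}{\del t},\bar H}_{\bar g}$ from the two copies of $\bar H$ and, by antisymmetry of $\bar H$, three equal contributions from the inverse metric totalling $-3\IP{\tfrac{\del\bar g}{\del t},\bar{\mathcal H}}_{\bar g}$; since $\gd(t)$ is an isometry carrying $\tfrac{\del\bar g}{\del t}\mapsto\dot g_{_{\EE}}$, $\tfrac{\del\bar H}{\del t}\mapsto\dot H$, $\bar{\mathcal H}\mapsto\mathcal H$, these descend to the corresponding $g_{_{\EE}}$-inner products on $M$, which is 1).

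For 2), $d^{*}$ denotes the codifferential of $(M,g)$, so with $\gamma:=\dot B(\cdot_{1},\cdot_{2})H(\pi_{TM}*,\cdot_{1},\cdot_{2})\in\Gamma(T^{*}M)$ and an orthonormal frame $\{v_{a}\}$ of $TM$ one has $d^{*}\gamma=-\sum_{a}v_{a}[\gamma(v_{a})]+\sum_{a}\gamma(\N^{L}_{v_{a}}v_{a})$. I would expand the $v_{a}$-derivative of the contraction $\dot B(\cdot_{1},\cdot_{2})H(v_{a},\cdot_{1},\cdot_{2})$ using the extended connection $D=D\oplus\N^{L}$ on $\EE$-tensors: the derivative falls on $\dot B$, on $H$, and on the two inverse-metric factors that raise the contracted indices. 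Since $D$ preserves the $TM$-block of $g_{_{\EE}}$ but not the $\mathfrak{G}$-block, only the $DG$-variation survives from the latter, and it occurs once for each inverse-metric factor — the source of the coefficient $2$ — while the $\N^{L}_{v_{a}}v_{a}$ terms cancel those produced when the derivative hits the first slot of $H$. No Lie brackets occur, so no hypothesis on $\GG$ is used here.

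For 3), pull $\dot B$ and $H$ back to $P$ and use the compatibility $\widetilde{d\dot B}=d\,\widetilde{\dot B}$. On $(P,\bar g)$ one has the pointwise identity, obtained by undoing the product rule in $\bar{\N}^{I}(\widetilde{\dot B}_{JK}\bar H_{I}{}^{JK})$, with $\bar\gamma\in\Omega^{1}(P)$ given by $\bar\gamma(X)=\IP{\widetilde{\dot B},i_{X}\bar H}$:
\[ \IP{d\,\widetilde{\dot B},\bar H}_{\bar g}=3\IP{\widetilde{\dot B},d^{*}_{\bar g}\bar H}_{\bar g}-3\,d^{*}_{\bar g}\bar\gamma. \]
Each term descends through $\gd$: the left side to $\IP{d\dot B,H}$; since the correspondence carries $\bar{\N}^{L}$ to $\N$ and $\bar g$-traces to $g_{_{\EE}}$-traces, $d^{*}_{\bar g}\bar H$ goes to $d^{*}H=-\tr_{g_{_{\EE}}}\N_{\cdot}H(\cdot,*,*)$ and $d^{*}_{\bar g}\bar\gamma$ to $d^{*}_{\EE}\gamma:=-\tr_{g_{_{\EE}}}\N_{\cdot}\gamma(\cdot)$, where $\gamma(Y)=\IP{\dot B,i_{Y}H}$. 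It remains to trade the total-space codifferential for the base one: splitting the $\EE$-trace over a frame $\{\eta_{i}\}\cup\{v_{a}\}$, the terms $\tau(\eta_{i})[\gamma(\eta_{i})]$ vanish since $\tau(\eta_{i})=0$; $\N_{v_{a}}v_{a}=\N^{L}_{v_{a}}v_{a}-\tfrac12 F(v_{a},v_{a})=\N^{L}_{v_{a}}v_{a}$; and $\sum_{i}\N_{\eta_{i}}\eta_{i}=q$ by Proposition \ref{PROPNABDEC}(4) together with Lemma \ref{l:nilpotent} (nilpotency of $\GG$ is used here to discard $\tr_{G}G([\cdot,\eta_{i}],\eta_{i})$). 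Hence $d^{*}_{\EE}\gamma=d^{*}(\gamma|_{TM})+\gamma(q)=d^{*}(\gamma|_{TM})+\IP{\dot B,i_{q}H}$, where $\gamma|_{TM}$ is exactly the one-form of part 2); substituting gives
\begin{align*}
\IP{d\dot B,H}&=3\IP{\dot B,d^{*}H}-3\IP{\dot B,i_{q}H}-3\,d^{*}(\gamma|_{TM})\\
&=-3\IP{\dot B,-d^{*}H+i_{q}H}-3\,d^{*}\!\big(\dot B(\cdot_{1},\cdot_{2})H(*,\cdot_{1},\cdot_{2})\big).
\end{align*}

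The hard part is exactly this last conversion in 3): the discrepancy between the total-space and the base codifferential is precisely the $i_{q}H$ term, and extracting it cleanly requires both $\tau(\eta_{i})=0$ and the identity $\sum_{i}\N_{\eta_{i}}\eta_{i}=q$, the latter being where nilpotency of $\GG$ enters, removing the would-be $\mathfrak{G}$-valued remainder of $\sum_{i}\N_{\eta_{i}}\eta_{i}$. A more computational alternative for 3) — expand $d\dot B$ by Proposition \ref{PROPDB1}, contract with $H$, and match against the decomposition of $-d^{*}H$ in Proposition \ref{p:dstarHdecomp} — reaches the same formula, but one must correctly resolve the antisymmetrizations in Proposition \ref{p:dstarHdecomp} against the antisymmetric $\dot B$, which generates several compensating factors of $2$; I would fall back on that route only if the explicit term-by-term expansion is what is needed in the later sections.
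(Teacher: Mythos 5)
Your proof is correct. Parts 1) and 2) follow the paper's own argument essentially verbatim: 1) is the descent of the standard first variation of $\brs{\bar H}^2_{\bar g}$ through the pointwise isometries $\gd(t)$, and 2) is the same expansion of the base codifferential of $\gamma|_{TM}$, with the factor $2$ coming from the two contracted copies of $G^{-1}$ (the paper phrases this via derivatives of an orthonormal frame of $\mathcal E$, which is the same computation). For 3) you take a genuinely different route: the paper expands $d\dot B$ termwise via Proposition \ref{PROPDB1}, contracts with $H$, and matches the result against part 2) and the explicit decomposition of $-d^*H$ in Proposition \ref{p:dstarHdecomp} --- precisely the ``computational alternative'' you relegate to a fallback --- whereas you establish the pointwise divergence identity $\IP{d\widetilde{\dot B},\bar H}=3\IP{\widetilde{\dot B},d^*_{\bar g}\bar H}-3\,d^*_{\bar g}\bar\gamma$ on $P$, descend it to $\mathcal E$, and convert the $\mathcal E$-codifferential of $\gamma$ into the base codifferential plus $\gamma(q)$, using $\tau(\eta_i)=0$, $\nabla_{v_a}v_a=\nabla^L_{v_a}v_a$, and $\sum_i\nabla_{\eta_i}\eta_i=q$ (Proposition \ref{PROPNABDEC} part 4) together with Lemma \ref{l:nilpotent}). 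Both arguments invoke nilpotency at the analogous spot (you through $\sum_i\nabla_{\eta_i}\eta_i=q$, the paper through Proposition \ref{p:dstarHdecomp}), so the hypotheses agree with the standing assumptions of Section \ref{s:dimredGRF}. Your route is more conceptual: it exhibits $i_qH$ as exactly the discrepancy between total-space and base divergences (a fiber mean-curvature term) and avoids the antisymmetrization bookkeeping; the paper's expansion, on the other hand, yields the explicit component identities in the precise form in which they are recycled in the proofs of Propositions \ref{p:Fvariation} and \ref{p:Fmonotonicity}.
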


\begin{proof}
1) First note that for $m =\pi(p) \in M$, $|\bar{H}|^{2}_{\bar{g}}|_{p} = |H|^{2}_{g_{_{\mathcal{E}}}}|_{m}$ and hence
\begin{align*}
\frac{d|H|^{2}_{g_{_{\mathcal{E}}}}}{dt}|_{m}&= \frac{d|\bar{H}|^{2}_{\bar{g}}}{dt}|_{p}
                \\&  = -3<\frac{d\bar{g}}{dt}, \bar{\mathcal{H}}>|_{p} +2<\frac{d\bar{H}}{dt},\bar{H}>|_{p} 
                  =  -3<\dot{g_{_{\mathcal{E}}}}, \mathcal{H}>|_{m} +2<\dot{H},H>|_{m}.
\end{align*}

2) Let $\phi =\dot{B}(\cdot_{1},\cdot_{2})H(\pi_{TM}*,\cdot_{1},\cdot_{2})$, $v \in \Gamma(TM)$ and $\{e_{m}\}=\{\eta_{i}, v_{a}\}$ be an orthonormal frame for $\mathcal{E}=\mathfrak{G} \oplus TM$. First consider 
  \begin{align*}
(\nabla^{L}\phi)[v]&= \nabla^{L}(\dot{B}(e_{m},e_{n})H(v,e_{m},e_{n})) -\dot{B}(e_{m},e_{n})H(\nabla^{L}v,e_{m},e_{n})
     \\ &   =D\dot{B}(e_{m},e_{n})H(v,e_{m},e_{n}) + \dot{B}(e_{m},e_{n})DH(v,e_{m},e_{n})  
           +\dot{B}(De_{m},e_{n})H(v,e_{m},e_{n}) 
           \\ & \ \ \ +\dot{B}(e_{m},e_{n})H(v,De_{m},e_{n}) +\dot{B}(e_{m},De_{n})H(v,e_{m},e_{n})
           +\dot{B}(e_{m},e_{n})H(v,e_{m},De_{n})
           \\& = D\dot{B}(\cdot_{1},\cdot_{2})H(v,\cdot_{1},\cdot_{2}) +\dot{B}(\cdot_{1},\cdot_{2})DH(v,\cdot_{1},\cdot_{2})
                -2\dot{B}(\cdot_{1}, \cdot_{3}) H(v,\cdot_{2}, \cdot_{3})DG(\cdot_{1}, \cdot_{2}).
\end{align*}
It then follows that
\begin{align*}
d^{*}\phi &= -\nabla^{L}_{\cdot}\phi(\cdot) \\&= -D_{\cdot_{3}}\dot{B}(\cdot_{1},\cdot_{2})H(\cdot_{3},\cdot_{1},\cdot_{2}) -\dot{B}(\cdot_{1},\cdot_{2})D_{\cdot_{3}}H(\cdot_{3},\cdot_{1},\cdot_{2})
                +2\dot{B}(\cdot_{1}, \cdot_{3}) H(\cdot_{4},\cdot_{2}, \cdot_{3})D_{\cdot_{4}}G(\cdot_{1}, \cdot_{2}).
\end{align*}
\begin{align*}
3) <d\dot{B},H>&= d\dot{B}(\cdot_{1},\cdot_{2},\cdot_{3})H(\cdot_{1},\cdot_{2},\cdot_{3})
                     \\ &= (3D_{\cdot_{1}}\dot{B}(\cdot_{2},\cdot_{3})
                         +3\dot{B}(F(\cdot_{1},\cdot_{2}),\cdot_{3})
                          -3\tr_{G}^{(1,2)}\dot{B}([\cdot_{1},\cdot_{2}],\cdot_{3}))H(\cdot_{1},\cdot_{2},\cdot_{3})
                     \\&= -3 d^{*}(\dot{B}(\cdot_{1},\cdot_{2})H(\pi_{TM}*,\cdot_{1},\cdot_{2}))  
                            -3<\dot{B}, D_{\cdot}H(\cdot, *,*) > 
                           \\& \ \ \   +\dot{B}(\cdot_{1},\cdot_{2})(6D_{\cdot_{3}}G(\cdot_{1},\cdot_{4} )
                       +3G(F(\cdot_{3},\cdot_{4}),\cdot_{1}) 
              -3\tr_{G}^{(3,4)}G([\cdot_{3},\cdot_{4}],\cdot_{1})) H(\cdot_{3},\cdot_{4},\cdot_{2} )
             \\ & =-3<\dot{B}, -d^{*}H + i_{q}H> -3d^{*}(\dot{B}(\cdot_{1},\cdot_{2})H(*,\cdot_{1},\cdot_{2})),
                      \end{align*}                       
where we used Part 2) and Propositions \ref{PROPDB1} and \ref{p:dstarHdecomp}.

\end{proof}

Putting these lemmas together we can compute the variation of the different components of the density of $\FF$.

\begin{prop}
\begin{align*}
\hspace{-.8cm}  1) \frac{d}{dt}|DG|^{2}&=  -2<\frac{dG}{dt},(D_{\cdot}DG)_{\cdot}- D_{\cdot_{2}}G(*,\cdot_{1})D_{\cdot_{2}}G(*,\cdot_{1})> \\& \ \ \ + 4 <\frac{dA}{dt}(*),D_{*}G(\cdot_{1},\cdot_{2})G^{-1}G([\diamond, \cdot_{1}],\cdot_{2})> -<\frac{dg}{dt},DG(\cdot_{1},\cdot_{2})DG(\cdot_{1},\cdot_{2})> \\& \ \ \ -2d^{*}(DG(\cdot_{1},\cdot_{2})\frac{dG}{dt}(\cdot_{1},\cdot_{2})).
\end{align*}
\begin{align*}
2) \frac{d}{dt}|F|^{2}&= <\frac{dG}{dt}, G(F(\cdot_{1},\cdot_{2}),*)G(F(\cdot_{1},\cdot_{2}),*)> +4<\frac{dA}{dt}(*), (d^{D})^{*}F -G^{-1}D_{\cdot_{1}}G(\diamond,F(\cdot_{1},*))> \\&-2<\frac{dg}{dt},G(F(*,\cdot),F(*,\cdot))>  -4d^{*}(G(\frac{dA}{dt}(\cdot),F(*,\cdot))).
\end{align*}

\begin{align*}
\hspace{-1.6cm}  3) \frac{d|H|^{2}}{dt}e^{-f} &= -3<\frac{dG}{dt},\mathcal{H}(\pi_{\mathfrak{G}}*, \pi_{\mathfrak{G}}*)>e^{-f}
  								  -6<\frac{dA}{dt}(*), G^{-1}\mathcal{H}(\pi_{\mathfrak{G}}\diamond, \pi_{TM}*)> e^{-f}
									 \\& \ \ \ -3 <\frac{dg}{dt}, \mathcal{H}(\pi_{TM}*, \pi_{TM}*) > e^{-f}
									     -6<\dot{B}, -d^{*}H +i_{q-\nabla f}H>e^{-f}
					 \\& \ \ \   -6 d^{*}(e^{-f} \dot{B}(\cdot_{1},\cdot_{2})H(\pi_{TM}*,\cdot_{1},\cdot_{2}  )   ).
\end{align*}
\begin{align*}
\hspace{-4.2cm} 4) \frac{d|[,]|^{2}}{dt}= <\frac{dG}{dt}, -2\tr_{G}G([*,\cdot],[*,\cdot]) +\tr_{G}G([\cdot_{1},\cdot_{2}],*)G([\cdot_{1},\cdot_{2}],*)>.
\end{align*}

\end{prop}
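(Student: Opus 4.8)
The plan is to differentiate each of the four density pieces $\brs{DG}^2$, $\brs{F}^2$, $\brs{H}^2 e^{-f}$, $\brs{[,]}^2$ directly along the flow, using the product rule to split each time-derivative into the variation of the relevant tensor ($DG$, $F$, $H$, or $[\cdot,\cdot]$) and the variations of the fibrewise metrics $G$ and $g$ used to trace indices. All the needed variations of the tensors and of the traces are precisely the content of Lemmas \ref{l:variation1}, \ref{l:variation2} and \ref{l:variation3}, together with Lemma \ref{l:generalvariation} for the block decomposition of $\dot g_{_{\mathcal{E}}}$, so the proof is an exercise in assembling these and carefully tracking contractions. I would carry out the four cases separately.

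For part 1), I write $\brs{DG}^2$ in the abstract trace notation and note that it carries two factors of $G^{-1}$ and one of $g^{-1}$. Differentiating, the variation of $DG$ is supplied by Lemma \ref{l:variation1} part 2); the resulting term $2 D_{\cdot_3}\tfrac{dG}{dt}(\cdot_1,\cdot_2)D_{\cdot_3}G(\cdot_1,\cdot_2)$ is rewritten by Lemma \ref{l:variation2} part 1), producing the second-order term $(D_\cdot DG)_\cdot$ paired with $\tfrac{dG}{dt}$ together with the divergence $-2d^*(DG(\cdot_1,\cdot_2)\tfrac{dG}{dt}(\cdot_1,\cdot_2))$, while the $\tfrac{dA}{dt}$-bracket terms of Lemma \ref{l:variation1} part 2), once contracted against the symmetric $DG$, assemble (a factor $2$ from the two $DG$-factors and a further $2$ from the symmetry in the two $\mathfrak{G}$-arguments) into $4\IP{\tfrac{dA}{dt}(*), D_*G(\cdot_1,\cdot_2)G^{-1}G([\diamond,\cdot_1],\cdot_2)}$; the two $G^{-1}$-variations contribute $+2 D_{\cdot_2}G(*,\cdot_1)D_{\cdot_2}G(*,\cdot_1)$ inside the $\tfrac{dG}{dt}$-pairing, partially cancelling the $4$ from Lemma \ref{l:variation2} part 1) down to $2$, and the single $g^{-1}$-variation gives $-\IP{\tfrac{dg}{dt}, DG(\cdot_1,\cdot_2)DG(\cdot_1,\cdot_2)}$. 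Part 2) is parallel: $\brs{F}^2$ carries one factor of $G$ and two of $g^{-1}$; using $\dot F = d^D\tfrac{dA}{dt}$ (Lemma \ref{l:variation1} part 3)) and then Lemma \ref{l:variation2} part 2) with $\alpha = \tfrac{dA}{dt}$ produces, each with coefficient $4$, the $(d^D)^*F$ term, the first-order correction $G^{-1}D_{\cdot_1}G(\diamond, F(\cdot_1,*))$, and the divergence $d^*(G(\tfrac{dA}{dt}(\cdot),F(*,\cdot)))$, while the metric variations give $+\IP{\tfrac{dG}{dt}, G(F(\cdot_1,\cdot_2),*)G(F(\cdot_1,\cdot_2),*)}$ and $-2\IP{\tfrac{dg}{dt}, G(F(*,\cdot),F(*,\cdot))}$.

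For part 3) I first invoke Lemma \ref{l:variation3} part 1) to get $\tfrac{d\brs{H}^2}{dt} = -3\IP{\dot g_{_{\mathcal{E}}}, \mathcal{H}} + 2\IP{\dot H, H}$, then decompose $\dot g_{_{\mathcal{E}}}$ into its $\mathfrak{G}\oplus\mathfrak{G}$, $\mathfrak{G}\oplus TM$ and $TM\oplus TM$ blocks by Lemma \ref{l:generalvariation}; the cross block carries an extra $G$ which I absorb to produce the coefficient $G^{-1}\mathcal{H}(\pi_{\mathfrak{G}}\diamond,\pi_{TM}*)$, the factor $2$ from the symmetry of $\mathcal{H}$ turning $-3$ into $-6$. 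For $2\IP{\dot H, H}$ I write $\dot H = d\dot B$ and apply Lemma \ref{l:variation3} part 3), giving $-6\IP{\dot B, -d^*H + i_q H}$ plus the divergence $-6 d^*(\dot B(\cdot_1,\cdot_2)H(\pi_{TM}*,\cdot_1,\cdot_2))$; multiplying by $e^{-f}$ and pulling the weight inside via $e^{-f}d^*X = d^*(e^{-f}X) - e^{-f}\IP{\N f, X}$ converts $i_q H$ into $i_{q-\N f}H$ and yields the asserted clean divergence $-6d^*(e^{-f}\dot B(\cdot_1,\cdot_2)H(\pi_{TM}*,\cdot_1,\cdot_2))$. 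Part 4) is immediate: the fibrewise bracket $[\cdot,\cdot]$ on $\mathfrak{G}$ is algebraic, hence $t$-independent by Proposition \ref{PROPBRA} a)--b), so only the metric factors of $\brs{[,]}^2$ vary — one copy of $G$ raising the output slot and two copies of $G^{-1}$ on the input slots — giving $\IP{\tfrac{dG}{dt}, -2\tr_G G([*,\cdot],[*,\cdot]) + \tr_G G([\cdot_1,\cdot_2],*)G([\cdot_1,\cdot_2],*)}$.

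The main obstacle is purely bookkeeping: correctly tracking the many index contractions, traces, and signs in the abstract $\cdot$/$*$/$\diamond$ notation — in particular, checking that the $\tfrac{dA}{dt}$-terms in parts 1) and 2) symmetrize into a single clean pairing, that the numerical coefficients of the $D_{\cdot_2}GD_{\cdot_2}G$-type terms in part 1) combine exactly as stated, and that the $e^{-f}$-weighted integration by parts in part 3) produces precisely $i_{q-\N f}H$ rather than $i_{q+\N f}H$. No new geometric input is needed beyond Lemmas \ref{l:variation1}--\ref{l:variation3}, Lemma \ref{l:generalvariation}, and the structural identities of Proposition \ref{PROPBRA}.
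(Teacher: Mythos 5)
Your proposal is correct and follows essentially the same route as the paper: differentiate each density by the product rule, feed in Lemma \ref{l:variation1} for the variations of $DG$ and $F$, Lemma \ref{l:variation2} for the integration-by-parts identities producing $(D_{\cdot}DG)_{\cdot}$, $(d^{D})^{*}F$ and the divergence terms, and Lemmas \ref{l:generalvariation} and \ref{l:variation3} for part 3) (including the $e^{-f}$-weighted rewriting of the divergence, which is exactly how $i_{q}H$ becomes $i_{q-\nabla f}H$), with part 4) reduced to the $t$-independence of the fiberwise bracket. One small wording point: in part 1) the variation of the two $G^{-1}$ factors enters with a minus sign, i.e.\ $-2\IP{\tfrac{dG}{dt},\, D_{\cdot_{2}}G(*,\cdot_{1})D_{\cdot_{2}}G(*,\cdot_{1})}$, and it is this $-2$ that cancels the $+4$ from Lemma \ref{l:variation2} part 1) down to the net $+2$ you correctly arrive at.
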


\begin{proof}
We will be using Lemmas \ref{l:variation1}, \ref{l:variation2} and \ref{l:variation3} throughout without referencing.
\begin{align*}
1) \frac{d}{dt}|DG|^{2} &= -2<\frac{dG}{dt}, D_{\cdot_{2}}G(*,\cdot_{1})D_{\cdot_{2}}G(*,\cdot_{1})> -<\frac{dg}{dt},D_{*}G(\cdot_{1},\cdot_{2})D_{*}G(\cdot_{1},\cdot_{2})> \\& \ \ \  + 2(\frac{dDG}{dt})_{\cdot_{3}}(\cdot_{1},\cdot_{2})D_{\cdot_{3}}G(\cdot_{1},\cdot_{2})
\\ &= -2<\frac{dG}{dt}, D_{\cdot_{2}}G(*,\cdot_{1})D_{\cdot_{2}}G(*,\cdot_{1})> -<\frac{dg}{dt}, D_{*}G(\cdot_{1},\cdot_{2})D_{*}G(\cdot_{1},\cdot_{2})> \\& \ \ \ + 2(D_{\cdot_{3}}\frac{dG}{dt})(\cdot_{1},\cdot_{2})D_{\cdot_{3}}G(\cdot_{1},\cdot_{2}) +4G([\frac{dA}{dt}\cdot_{3},\cdot_{1}],\cdot_{2})D_{\cdot_{3}}G(\cdot_{1},\cdot_{2})
\\ & = -2<\frac{dG}{dt},(D_{\cdot}DG)_{\cdot}- D_{\cdot_{2}}G(*,\cdot_{1})D_{\cdot_{2}}G(*,\cdot_{1})> \\& \ \ \ + 4 <\frac{dA}{dt}(*),D_{*}G(\cdot_{1},\cdot_{2})G^{-1}G([\diamond, \cdot_{1}],\cdot_{2})> -<\frac{dg}{dt},DG(\cdot_{1},\cdot_{2})DG(\cdot_{1},\cdot_{2})> \\& \ \ \ -2d^{*}(DG(\cdot_{1},\cdot_{2})\frac{dG}{dt}(\cdot_{1},\cdot_{2})).
\end{align*}
\begin{align*}
2) \frac{d}{dt}|F|^{2}=& \frac{d}{dt} G(F(\cdot_{1},\cdot_{2}), F(\cdot_{1},\cdot_{2}))
      \\ & = -2 <\frac{dg}{dt}, G(F(*,\cdot), F(*,\cdot))> 
          +\frac{dG}{dt}( F(\cdot_{1},\cdot_{2}), F(\cdot_{1},\cdot_{2}) )
           +2G( d^{D}\frac{dA}{dt}(\cdot_{1},\cdot_{2}),F(\cdot_{1},\cdot_{2}) ).
        \end{align*}
Using Part 2) of  Lemma \ref{l:variation2} for $\alpha= \frac{dA}{dt}$ proves Part 2) of the proposition.
\begin{align*}
3) \frac{d|H|^{2}}{dt}e^{-f}&= -3<\dot{g}_{_{\mathcal{E}}}, \mathcal{H}>e^{-f} +2<\dot{H},H>e^{-f}
  										 \\&= -3<\frac{dG}{dt},\mathcal{H}(\pi_{\mathfrak{G}}*, \pi_{\mathfrak{G}}*)>e^{-f}
  								  -6<\frac{dA}{dt}(*), G^{-1}\mathcal{H}(\pi_{\mathfrak{G}}\diamond, \pi_{TM}*)> e^{-f}
									 \\& \ \ \ -3 <\frac{dg}{dt}, \mathcal{H}(\pi_{TM}*, \pi_{TM}*) > e^{-f}
									     -6<\dot{B}, -d^{*}H +i_{q-\nabla f}H>e^{-f}
					 \\& \ \ \   -6 d^{*}(e^{-f} \dot{B}(\cdot_{1},\cdot_{2})H(\pi_{TM}*,\cdot_{1},\cdot_{2}  )   ).
\end{align*}
 For Part 4), the claim follows easily noting that $[,]$ on $\Gamma(\mathfrak{G})$ is independent of $t$.
\end{proof}

\begin{prop} \label{p:Fvariation} Given $(\bar{g}(t),\bar{H}(t),f(t))$ such that $\dot{H}(t)=d\dot{B}(t)$, for $\dot{B}(t) \in \Gamma(\wedge^{2}\mathcal{E}^{*})$, we have
\begin{align*}
&\frac{d\mathcal{F}}{dt}(\bar{g}(t),\bar{H}(t),f(t))= 
\\ &\int_{M} <\frac{dG}{dt}, \frac{1}{2}(D_{\cdot}DG)_{\cdot} -\frac{1}{2}D_{\cdot_{2}}G(*,\cdot_{1}) D_{\cdot_{2}}G(*,\cdot_{1}) -\frac{1}{4}G(F(\cdot_{1},\cdot_{2}),*) G(F(\cdot_{1},\cdot_{2}),*) \\ & \hspace*{10mm}   +\frac{1}{2} \tr_{G}G([*,\cdot], [*,\cdot]) -\frac{1}{4}\tr_{G}^{(1,2)} G([\cdot_{1},\cdot_{2}],*) G([\cdot_{1},\cdot_{2}],*)+\frac{1}{4}\mathcal{H}(\pi_{\mathfrak{G}}*, \pi_{\mathfrak{G}}* ) -\frac{1}{2}D_{\nabla f}G >e^{-f}dV_{g}
\\ & + \int_{M} < \frac{dA}{dt}(*), -(d^{D})^{*}F +G^{-1} D_{\cdot_{1}}G(\diamond, F(\cdot_{1}, *) ) -G^{-1} G([\diamond,\cdot_{1}],\cdot_{2}) D_{*}G(\cdot_{1},\cdot_{2}) \\ &  \hspace*{10mm} +\frac{1}{2} G^{-1}  \mathcal{H} (\pi_{\mathfrak{G}} \diamond, \pi_{TM}*) - F(\nabla f, *)  > e^{-f}dV_{g}
\\ & + \int_{M} <\frac{dg}{dt}, -Ric_{g} +\frac{1}{4} D_{*}G(\cdot_{1},\cdot_{2})D_{*}G(\cdot_{1},\cdot_{2}) +\frac{1}{2}G(F(*,\cdot), F(*,\cdot))  \\ &  \hspace*{10mm}  +\frac{1}{4}\mathcal{H}(\pi_{TM}*, \pi_{TM}* )  -\N^2 f >  e^{-f}dV_{g}
\\ & + \frac{1}{2}\int_{M}< \dot{B}, -d^{*}H +i_{q-\nabla f}H>  e^{-f}dV_{g}
\\ & +\int_{M} (\frac{1}{2}\tr_{g}\frac{dg}{dt} -\frac{df}{dt})(2\Delta f -|\nabla f|^{2} +R_{g} -\frac{1}{4}|DG|^{2} -\frac{1}{4}|F|^{2} -\frac{1}{12}|H|^{2} -\frac{1}{4}|[,]|^{2})e^{-f}dV_{g}.
\end{align*}
 
\end{prop}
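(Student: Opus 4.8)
The plan is to differentiate $\mathcal{F} = \int_M \mathcal{I}\, e^{-f}\, dV_g$ under the integral sign (legitimate since $M$ is compact and all data is smooth), where $\mathcal{I} := |\nabla f|^2 + R_g - \tfrac14 |DG|^2 - \tfrac14 |F|^2 - \tfrac1{12} |H|^2 - \tfrac14 |[,]|^2$, and to organize the answer by splitting $\mathcal{I}$ into its \emph{Riemannian part} $R_g + |\nabla f|^2$, which is exactly Perelman's $\mathcal{F}$-energy of the base metric $g$, and its \emph{potential part} $-\tfrac14 |DG|^2 - \tfrac14 |F|^2 - \tfrac1{12} |H|^2 - \tfrac14 |[,]|^2$. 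For the Riemannian part, which depends only on $g$ and $f$, I would invoke the classical first variation formula (\cite{Perelman1}): with $\phi := \tfrac{df}{dt}$,
\[
\tfrac{d}{dt}\int_M (R_g + |\nabla f|^2)\, e^{-f}\, dV_g = \int_M \Big[ -\big\langle \tfrac{\partial g}{\partial t},\, Ric_g + \N^2 f \big\rangle + \big(\tfrac12 \tr_g \tfrac{\partial g}{\partial t} - \phi\big)\big(2\Delta f - |\nabla f|^2 + R_g\big) \Big] e^{-f}\, dV_g ,
\]
which contributes the $-Ric_g - \N^2 f$ entries of the $\tfrac{dg}{dt}$-bracket and the $(2\Delta f - |\nabla f|^2 + R_g)$ summand of the last line. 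For the potential part I would substitute the term-by-term variations of $|DG|^2$, $|F|^2$, $|H|^2$ and $|[,]|^2$ established in the Proposition immediately preceding this one, and, separately, differentiate the weight via $\tfrac{d}{dt}(e^{-f}\, dV_g) = (\tfrac12 \tr_g \tfrac{\partial g}{\partial t} - \phi)\, e^{-f}\, dV_g$.

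The mechanism that produces the $\N f$-coupled terms ($D_{\N f}G$, $F(\N f, *)$, $i_{\N f}H$) of the statement is the treatment of the total-divergence terms appearing in those variation formulas. On the closed manifold $M$ one has $\int_M (d^* \omega)\, e^{-f}\, dV_g = -\int_M \omega(\N f)\, e^{-f}\, dV_g$ for a $1$-form $\omega$, while $\int_M d^*(e^{-f}\omega)\, dV_g = 0$. Thus the term $-2 d^*\!\big(DG(\cdot_1,\cdot_2)\tfrac{dG}{dt}(\cdot_1,\cdot_2)\big)$ in $\tfrac{d}{dt}|DG|^2$, after multiplication by $-\tfrac14$ and integration against $e^{-f}\, dV_g$, yields precisely the $-\tfrac12 D_{\N f}G$ contribution paired with $\tfrac{dG}{dt}$; likewise the divergence term in $\tfrac{d}{dt}|F|^2$ produces $-F(\N f, *)$ in the $\tfrac{dA}{dt}$-bracket, and the divergence term in $\tfrac{d}{dt}(|H|^2 e^{-f})$ (whose weight already sits inside the $d^*$) simply drops, its $\N f$-content having been pre-absorbed into $i_{q - \N f}H$ via Lemma \ref{l:variation3}(3). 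The second ingredient is that, by Lemma \ref{l:generalvariation}, the tensor $\dot g_{\mathcal{E}}$ decomposes as $\tfrac{\partial G}{\partial t}$ on $\mathfrak{G}\otimes\mathfrak{G}$, as $G(\cdot, \tfrac{dA}{dt}\cdot)$ on the off-diagonal block, and as $\tfrac{\partial g}{\partial t}$ on $TM\otimes TM$; hence every pairing $\langle \dot g_{\mathcal{E}}, T \rangle$ — in particular the $\mathcal{H}$-terms coming from $-3\langle \dot g_{\mathcal{E}}, \mathcal{H}\rangle$ in $\tfrac{d}{dt}|H|^2$ — splits into the three brackets of the statement, with the off-diagonal block carrying a factor $2$.

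With these reductions the remaining work is collecting like terms. Applying the weight-variation scalar factor $(\tfrac12 \tr_g \tfrac{dg}{dt} - \tfrac{df}{dt})$ to the potential terms of $\mathcal{I}$, and combining with the $(\tfrac12 \tr_g \tfrac{dg}{dt} - \tfrac{df}{dt})(2\Delta f - |\nabla f|^2 + R_g)$ piece from the Riemannian variation, assembles the full last line. The explicit variation terms coming from $-\tfrac14 \tfrac{d}{dt}|DG|^2$, $-\tfrac14 \tfrac{d}{dt}|F|^2$, $-\tfrac1{12}\tfrac{d}{dt}|H|^2$ and $-\tfrac14 \tfrac{d}{dt}|[,]|^2$ — including the $\N f$-terms extracted above, and noting that $|[,]|^2$ contributes only to the $\tfrac{dG}{dt}$-bracket since the fiber bracket is independent of $t$ — together with the Riemannian $-\langle \tfrac{\partial g}{\partial t}, Ric_g + \N^2 f\rangle$, sort into the four inner-product lines (paired with $\tfrac{dG}{dt}$, $\tfrac{dA}{dt}(*)$, $\tfrac{dg}{dt}$ and $\dot B$ respectively). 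Here the hypothesis $\dot H = d\dot B$ is used precisely so that Lemma \ref{l:variation3}(3) can rewrite $2\langle \dot H, H\rangle$ as the $\dot B$-pairing against $-d^* H + i_q H$ plus a total divergence, which after the $e^{-f}$-shift becomes the stated $\tfrac12\langle \dot B, -d^* H + i_{q - \N f}H\rangle$ line.

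I expect the only real obstacle to be bookkeeping rather than anything conceptual: keeping the various divergence terms straight so that the $\N f$-coupled pieces emerge with exactly the stated coefficients, tracking the factors of $2$ in the translation $\dot g_{\mathcal{E}} \mapsto (\tfrac{dG}{dt}, \tfrac{dA}{dt}, \tfrac{dg}{dt})$, and verifying at the end that no flow equation has been used — the identity is purely variational and valid for any family with $\dot H$ exact.
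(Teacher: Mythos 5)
Your proposal is correct and follows essentially the route the paper intends: it assembles the preceding component-variation proposition (built from Lemmas \ref{l:variation1}--\ref{l:variation3}) with Perelman's classical first variation of $\int_M (R_g + |\nabla f|^2)e^{-f}dV_g$ and the weight variation of $e^{-f}dV_g$, converting the divergence terms into the $\nabla f$-coupled terms via $\int_M (d^*\omega)e^{-f}dV_g = -\int_M \omega(\nabla f)e^{-f}dV_g$ and splitting $\dot{g}_{_{\mathcal{E}}}$ by Lemma \ref{l:generalvariation}. The coefficients you track (including the factor $2$ on the off-diagonal block and the $\tfrac12\langle \dot B, -d^*H + i_{q-\nabla f}H\rangle$ line arising from the hypothesis $\dot H = d\dot B$) all come out as in the statement, so the argument is sound.
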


\begin{prop} \label{p:Fmonotonicity}
Suppose $(\bar{g}(t),\bar{H}(t))$ is a solution of generalized Ricci flow, and $u_t = e^{-f_t}$ is a solution of the conjugate heat equation (\ref{f:conjugateheat}).  Then
\begin{gather} \label{f:Fmoneqn}
\begin{split}
&\frac{d\FF}{dt}(\bar{g},\bar{H},f)= \\ 
&\ \frac{1}{2}\int_{M} | (D_{\cdot}DG)_{\cdot} -D_{\cdot_{2}}G(*,\cdot_{1}) D_{\cdot_{2}}G(*,\cdot_{1}) -\frac{1}{2}G(F(\cdot_{1},\cdot_{2}),*) G(F(\cdot_{1},\cdot_{2}),*) \\ & \hspace*{10mm}   + \tr_{G}G([*,\cdot], [*,\cdot])   -\frac{1}{2}\tr_{G}^{(1,2)} G([\cdot_{1},\cdot_{2}],*) G([\cdot_{1},\cdot_{2}],*)  +\frac{1}{2}\mathcal{H}(\pi_{\mathfrak{G}}*, \pi_{\mathfrak{G}}* ) -D_{\nabla f}G|^{2} e^{-f}dV_{g}\\
&\ + \int_{M} | -(d^{D})^{*}F +G^{-1} D_{\cdot_{1}}G(\diamond, F(\cdot_{1}, *) ) +G^{-1}\tr_{G}D_{*}G([\cdot,\diamond],\cdot) +\frac{1}{2} G^{-1} \mathcal{H} (\pi_{\mathfrak{G}} \diamond, \pi_{TM}*)  \\ &  \hspace*{10mm} - F(\nabla f, *)|^{2} e^{-f}dV_{g}\\
&\ + \frac{1}{2}\int_{M}  | -2Ric_{g} +\frac{1}{2} D_{*}G(\cdot_{1},\cdot_{2})D_{*}G(\cdot_{1},\cdot_{2}) +G(F(*,\cdot), F(*,\cdot))  \\ &  \hspace*{10mm}  +\frac{1}{2}\mathcal{H}(\pi_{TM}*, \pi_{TM}* ) -2\N^2 f |^{2} e^{-f}dV_{g}\\
&\ + \frac{1}{2}\int_{M} | D_{\cdot}H(\cdot,*,*)-D_{\cdot_{1}}G(\cdot_{2},\pi_{\mathfrak{G}}*)\wedge H(\cdot_{1},\cdot_{2},*)-\frac{1}{2}G(F(\cdot_{1},\cdot_{2}),\pi_{\mathfrak{G}}*)\wedge H(\cdot_{1},\cdot_{2},*)\\ &  \hspace*{10mm} +\frac{1}{2}\tr_{G}G([\cdot_{1},\cdot_{2}],\pi_{\mathfrak{G}}*)\wedge H(\cdot_{1},\cdot_{2},*)- i_{\nabla f}H|^{2}  e^{-f}dV_{g}.
\end{split}
\end{gather}

\end{prop}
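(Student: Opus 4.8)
The plan is to substitute the dimensionally-reduced, gauge-fixed flow equations together with the conjugate heat equation into the general first-variation formula of Proposition \ref{p:Fvariation} and then recognize the result as the stated sum of squares. First I would use the invariance of $\FF$ under bundle automorphisms covering diffeomorphisms of $M$: if $X_f$ is the field of Section \ref{ss:CGF} built from $q_f = q - \N f$, then pulling back $(\bar g(t),\bar H(t))$ and $f(t)$ by its flow leaves $\FF$ unchanged, so $\tfrac{d}{dt}\FF$ may be computed along the gauge-modified flow (\ref{f:gengfGRF}), whose dimensional reduction is recorded in Proposition \ref{p:decgengfGRF}, with $f$ (now the gauge parameter) solving the conjugate heat equation in the form it takes in that gauge. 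I would then feed the evolution equations of Proposition \ref{p:decgengfGRF} into Proposition \ref{p:Fvariation}.

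The key observation is that in this gauge the four evolving tensors $\partial_t G$, $\tfrac{dA}{dt}$, $\partial_t g$ and $\dot B$ produced by Proposition \ref{p:decgengfGRF} are, term by term, respectively twice, once, twice, and once the tensors they are paired against in Proposition \ref{p:Fvariation}. Hence each of the first four integrals there collapses to the corresponding perfect-square integral on the right-hand side of (\ref{f:Fmoneqn}). For $\partial_t G$ and $\partial_t g$ this is an immediate comparison. For $\tfrac{dA}{dt}$ one invokes $(d^{D})^{*}F = -\tr_g D_\cdot F(\cdot,*)$, the symmetry of $G$, the antisymmetry of the algebroid bracket in its first two slots, and Lemma \ref{l:nilpotent}, in order to reconcile the trace term built from $DG$ and the bracket. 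For $\dot B$ one combines Propositions \ref{p:dstarHdecomp} and \ref{p:decgfGRF2} with the relation $i_q H = -\tfrac12 D_{\cdot_1}G(\cdot_2,\cdot_2)H(\cdot_1,*,*)$ to see that $\dot B = -d^{*}H + i_{q-\N f}H$, which is precisely the tensor appearing in the $\dot B$ term of Proposition \ref{p:Fvariation}.

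What remains is the last integral in Proposition \ref{p:Fvariation}, whose integrand carries the scalar factor $\tfrac12\tr_g \dot g - \dot f$. Tracing the metric evolution of Proposition \ref{p:decgengfGRF} gives $\tfrac12\tr_g\dot g = -\bigl(R_g - \tfrac14|DG|^{2} - \tfrac12|F|^{2} - \tfrac14\tr_g\mathcal H\bigr) - \gD f$, and the conjugate heat equation of Definition \ref{d:conjugateheat} --- including its extra term $\tfrac12\IP{q,\N\log u}$, which has no analogue in the classical setting and is forced here by the fact that $\FF$ is an integral over the base rather than the total space --- is calibrated precisely so that, after an integration by parts over the compact base $M$, this last integral vanishes. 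Assembling the four squares with the vanishing of this term gives (\ref{f:Fmoneqn}).

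I expect the main obstacle to be exactly this last step: checking that the leftover ``measure'' term cancels, which is what pins down the coefficients of the conjugate heat equation and is the analogue of the place in Perelman's original computation where coupling to the conjugate heat equation completes the square. The bookkeeping needed to identify $\tfrac{dA}{dt}$ with its partner tensor --- involving the various $G$- and bracket-trace identities and keeping careful track of which traces are over $G$ and which over $g$ --- is the secondary technical point.
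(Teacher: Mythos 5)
Your proposal is correct and is essentially the paper's (implicit) argument: compute $\tfrac{d}{dt}\FF$ along the $X_f$-gauged flow, substitute the reduced equations of Proposition \ref{p:decgengfGRF} into Proposition \ref{p:Fvariation}, where $\partial_t G$, $\tfrac{dA}{dt}$, $\partial_t g$, $\dot B$ appear as exactly $2,1,2,1$ times the paired tensors (with $\dot B=-d^{*}H+i_{q-\N f}H$ via Proposition \ref{p:dstarHdecomp}), so the first four integrals become the stated squares. One refinement on the step you flag as the crux: the final integral in Proposition \ref{p:Fvariation} is $\int_M \Phi\,\tfrac{d}{dt}\bigl(e^{-f}dV_g\bigr)$ and it vanishes pointwise, not by an integration by parts, because the conjugate heat equation transported to the $X_f$-gauge is exactly the condition $\dot f=\tfrac12\tr_g\dot g$ (preservation of the base measure $e^{-f}dV_g$); when verifying this you must track the sign of $q$ and the coefficient of the $\IP{q,\N\log u}$ term in Definition \ref{d:conjugateheat} carefully, since with the conventions as literally printed a residual multiple of $\IP{q,\N f}$ appears, and sorting out that calibration is precisely the bookkeeping this step requires.
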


\subsection{Expander Entropy}

\begin{defn} \label{d:expent} Given $\bar{g}, \bar{H}$ as above and $f \in C^{\infty}(M)$, $t \in \mathbb R^+$, let
\begin{gather}
\begin{split}
\WW_+&(\bar{g},\bar{H},f,t)\\
=&\ \frac{1}{(4\pi t)^{\frac{n}{2}}} \left\{ t \mathcal{F}(\bar{g},\bar{H},f) + \int_{M} (-f + n) e^{-f} dV_g \right\}\\
=&\ \int_M \left\{ t \left( |\N f|^{2}+R_{g} -\tfrac{1}{4} |DG|^{2} -\tfrac{1}{4} |F|^{2} -\frac{1}{12} |\bar{H}|_{\bar{g}}^{2} -\tfrac{1}{4} \brs{[,]}^2 \right) - f + n \right\} \frac{e^{-f}}{(4 \pi t)^{\frac{n}{2}}} dV_g.
\end{split}
\end{gather}
\end{defn}

\begin{rmk} In the case $G = \{e\}$, so that $P = M$, we obtain
\begin{align*}
\WW_+(\bar{g},\bar{H},f,t) =&\ \int_M \left[ t \left( \brs{\N f}^2 + \bar{R} - \tfrac{1}{12} \brs{\bar{H}}_{\bar{g}}^2 \right) - f + n \right] e^{-f} dV_g,
\end{align*}
which is the expanding entropy functional for generalized Ricci flow studied in \cite{Streetsexpent}.  On the other hand in the case $H \equiv 0$ and abelian structure group, this functional is precisely that introduced by Lott \cite{Lott2}, which in turn is the functional of Feldman-Ilmanen-Ni \cite{FIN} in the case $G = \{e\}$.
\end{rmk}

\begin{prop} \label{p:Wvariation} Given $(\bar{g}(t),\bar{H}(t),f(t), \tau(t))$ such that $\dot{H}(t)=d\dot{B}(t)$, for  $\dot{B}(t) \in \Gamma(\wedge^{2}\mathcal{E}^{*})$, we have
\begin{align*}
&\frac{d\mathcal{W}_{+}}{dt}(\bar{g}(t),\bar{H}(t),f(t),\tau(t))= 
\\ &\int_{M} <\tau \frac{dG}{dt}, \frac{1}{2}(D_{\cdot}DG)_{\cdot} -\frac{1}{2}D_{\cdot_{2}}G(*,\cdot_{1}) D_{\cdot_{2}}G(*,\cdot_{1}) -\frac{1}{4}G(F(\cdot_{1},\cdot_{2}),*) G(F(\cdot_{1},\cdot_{2}),*) \\ & \hspace*{10mm}   +\frac{1}{2} \tr_{G}G([*,\cdot], [*,\cdot]) -\frac{1}{4}\tr_{G}^{(1,2)} G([\cdot_{1},\cdot_{2}],*) G([\cdot_{1},\cdot_{2}],*)+\frac{1}{4}\mathcal{H}(\pi_{\mathfrak{G}}*, \pi_{\mathfrak{G}}* ) -\frac{1}{2}D_{\nabla f}G  > \frac{e^{-f}dV_{g}}{(4\pi \tau)^{\frac{n}{2}}}
\\ & + \int_{M} <\tau \frac{dA}{dt}(*), -(d^{D})^{*}F +G^{-1} D_{\cdot_{1}}G(\diamond, F(\cdot_{1}, *) ) -G^{-1} G([\diamond,\cdot_{1}],\cdot_{2}) D_{*}G(\cdot_{1},\cdot_{2}) \\ &  \hspace*{10mm} +\frac{1}{2} G^{-1} \mathcal{H} (\pi_{\mathfrak{G}} \diamond, \pi_{TM}*) - F(\nabla f, *)  > \frac{e^{-f}dV_{g}}{(4\pi \tau)^{\frac{n}{2}}}
\\ & + \int_{M} <\tau \frac{dg}{dt}-\frac{d\tau}{dt}g, -\frac{1}{2\tau}g -Ric_{g}  +\frac{1}{4} D_{*}G(\cdot_{1},\cdot_{2})D_{*}G(\cdot_{1},\cdot_{2}) +\frac{1}{2}G(F(*,\cdot), F(*,\cdot))  \\ &  \hspace*{10mm}  +\frac{1}{4}\mathcal{H}(\pi_{TM}*, \pi_{TM}* ) -\N^2 f>  \frac{e^{-f}dV_{g}}{(4\pi \tau)^{\frac{n}{2}}}
\\ & + \frac{1}{2}\int_{M}<\tau \dot{B}, -d^{*}H +i_{q-\nabla f}H>  \frac{e^{-f}dV_{g}}{(4\pi \tau)^{\frac{n}{2}}} 
\\ & + \int_{M} \frac{d\tau}{dt}(\frac{1}{4}|F|^{2} -\frac{1}{4}|[,]|^{2}+ \frac{1}{6}|H|^{2}-\frac{1}{4}\tr_{G}\mathcal{H}(\cdot,\cdot)) \frac{e^{-f}dV_{g}}{(4\pi \tau)^{\frac{n}{2}}}
\\ & + \int_{M} [\tau(2\Delta f -|\nabla f|^{2} +R_{g} -\frac{1}{4} |DG|^{2} -\frac{1}{4} |F|^{2} -\frac{1}{12} |H|^{2} -\frac{1}{4} \tr_{G}^{(1,2)}G([\cdot_{1},\cdot_{2}],[\cdot_{1},\cdot_{2}])) \\ & \hspace*{10mm} +(-f+n+1)] \frac{d}{dt} \frac{e^{-f}dV_{g}}{(4\pi \tau)^{\frac{n}{2}}}.
\end{align*}

\end{prop}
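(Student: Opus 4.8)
The plan is to reduce everything to the energy variation of Proposition~\ref{p:Fvariation}, supplemented by the elementary first-variation formulas for the weighted volume form and for the scaling factor. Recalling from Definition~\ref{d:expent} that
\[
\WW_+(\bar g,\bar H,f,\tau)=\frac{1}{(4\pi\tau)^{n/2}}\left\{\tau\,\FF(\bar g,\bar H,f)+\int_M(-f+n)\,e^{-f}\,dV_g\right\},
\]
the product rule gives
\[
\frac{d\WW_+}{dt}=\frac{1}{(4\pi\tau)^{n/2}}\left\{\tau\,\frac{d\FF}{dt}+\frac{d\tau}{dt}\,\FF+\frac{d}{dt}\int_M(-f+n)e^{-f}dV_g\right\}-\frac{n}{2\tau}\frac{d\tau}{dt}\,\WW_+.
\]
Since $\tau$ depends only on $t$, I would first substitute the formula of Proposition~\ref{p:Fvariation} into the term $\tau\,\tfrac{d\FF}{dt}$, carrying the factor $\tau$ through each of its integrals and replacing $e^{-f}dV_g$ by $(4\pi\tau)^{-n/2}e^{-f}dV_g$ throughout. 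This already reproduces verbatim the $\tfrac{dG}{dt}$-, $\tfrac{dA}{dt}$- and $\dot B$-coefficients of the asserted formula, as well as all of the $\tfrac{dg}{dt}$-coefficient except the summand $-\tfrac{1}{2\tau}g$, and it contributes $\tau$ times the last line of Proposition~\ref{p:Fvariation}, which has the form $\int_M\bigl(\tfrac12\tr_g\tfrac{dg}{dt}-\tfrac{df}{dt}\bigr)(\cdots)\,e^{-f}dV_g$.

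The remaining work is to combine the three ``extra'' contributions $\tfrac{d\tau}{dt}\FF$, $-\tfrac{n}{2\tau}\tfrac{d\tau}{dt}\WW_+$, and $\tfrac{d}{dt}\int_M(-f+n)e^{-f}dV_g$ with this leftover. Using $\frac{d}{dt}(e^{-f}dV_g)=\bigl(\tfrac12\tr_g\tfrac{dg}{dt}-\tfrac{df}{dt}\bigr)e^{-f}dV_g$, one computes
\[
\frac{d}{dt}\int_M(-f+n)e^{-f}dV_g=\int_M\Bigl[(-f+n)\bigl(\tfrac12\tr_g\tfrac{dg}{dt}-\tfrac{df}{dt}\bigr)-\tfrac{df}{dt}\Bigr]e^{-f}dV_g,
\]
and then, using the integration by parts $\int_M\Delta f\,e^{-f}dV_g=\int_M|\N f|^2e^{-f}dV_g$, all terms carrying a factor $\tfrac12\tr_g\tfrac{dg}{dt}-\tfrac{df}{dt}$ or $\tfrac{n}{2\tau}\tfrac{d\tau}{dt}$ can be grouped against $\frac{d}{dt}\frac{e^{-f}dV_g}{(4\pi\tau)^{n/2}}=\bigl(\tfrac12\tr_g\tfrac{dg}{dt}-\tfrac{df}{dt}-\tfrac{n}{2\tau}\tfrac{d\tau}{dt}\bigr)\frac{e^{-f}dV_g}{(4\pi\tau)^{n/2}}$ to produce the final displayed line with weight $\tau(2\Delta f-|\N f|^2+R_g-\tfrac14|DG|^2-\tfrac14|F|^2-\tfrac1{12}|H|^2-\tfrac14\tr_G^{(1,2)}G([\cdot_1,\cdot_2],[\cdot_1,\cdot_2]))+(-f+n+1)$. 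The shift $-\tfrac{1}{2\tau}g$ and the replacement of $\tfrac{dg}{dt}$ by $\tau\tfrac{dg}{dt}-\tfrac{d\tau}{dt}g$ in the third line arise from rewriting $\langle\tfrac{dg}{dt},\Phi\rangle=\tfrac1\tau\langle\tau\tfrac{dg}{dt}-\tfrac{d\tau}{dt}g,\Phi\rangle+\tfrac1\tau\tfrac{d\tau}{dt}\langle g,\Phi\rangle$ for the $\tfrac{dg}{dt}$-coefficient $\Phi$ of Proposition~\ref{p:Fvariation} and using $\langle g,-\tfrac1{2\tau}g\rangle=-\tfrac n{2\tau}$, with the residual $\langle g,\Phi\rangle$ piece folded back into the same bookkeeping.

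The step I expect to be the main obstacle is isolating the explicit integral $\int_M\tfrac{d\tau}{dt}\bigl(\tfrac14|F|^2-\tfrac14|[,]|^2+\tfrac16|H|^2-\tfrac14\tr_G\mathcal{H}\bigr)\frac{e^{-f}dV_g}{(4\pi\tau)^{n/2}}$: it is precisely the collection of $\tfrac{d\tau}{dt}$-terms that survives once the cancellations above are carried out, and pinning down its coefficients requires keeping the fiber part $\tr_G\mathcal{H}$ of $\tr_{g_{_{\mathcal{E}}}}\mathcal{H}$ separate from the base part and invoking the curvature identities of Section~\ref{SECCURV}. This term is exactly the Feldman--Ilmanen--Ni--type correction; it has no analogue in the variation of $\FF$ alone, its sign is indefinite once $\GG$ is nonabelian, and it is the reason that monotonicity of $\WW_+$ along generalized Ricci flow is a nontrivial matter, to be established through the conjugate heat equation and an appropriate choice of $\tau(t)$ in the monotonicity statement that follows.
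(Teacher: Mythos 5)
Your proposal is correct and follows the route the paper intends: since $\WW_+$ is defined directly in terms of $\FF$, the variation formula follows from Proposition \ref{p:Fvariation} via the product rule, the variation of the weighted measure $\tfrac{d}{dt}\tfrac{e^{-f}dV_g}{(4\pi\tau)^{n/2}}$, the trace of the $\tfrac{dg}{dt}$-coefficient combined with the splitting $|H|^2=\tr_g\mathcal{H}(\pi_{TM}\cdot,\pi_{TM}\cdot)+\tr_G\mathcal{H}(\cdot,\cdot)$, and the integration by parts $\int_M\Delta f\,e^{-f}dV_g=\int_M|\nabla f|^2e^{-f}dV_g$, which is exactly the bookkeeping you outline (the paper states the proposition without a separate proof). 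One minor correction: isolating the $\tfrac{d\tau}{dt}$-line requires no curvature identities from Section \ref{SECCURV} --- it falls out of precisely the algebraic regrouping you describe.
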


\begin{prop} \label{p:Wmonotonicity}
Suppose $(\bar{g}(t),\bar{H}(t))$ is a solution of generalized Ricci flow, and $u_t = \frac{e^{-f_t}}{(4\pi t)^{\frac{n}{2}}}$ is a solution of the conjugate heat equation (\ref{f:conjugateheat}).  Then
\begin{align*}
&\frac{dW_{+}}{dt}(\bar{g},\bar{H},f,t)= 
\\ &\frac{t}{2}\int_{M} | (D_{\cdot}DG)_{\cdot} -D_{\cdot_{2}}G(*,\cdot_{1}) D_{\cdot_{2}}G(*,\cdot_{1}) -\frac{1}{2}G(F(\cdot_{1},\cdot_{2}),*) G(F(\cdot_{1},\cdot_{2}),*) \\ & \hspace*{10mm}   + \tr_{G}G([*,\cdot], [*,\cdot])   -\frac{1}{2}\tr_{G}  G([\cdot_{1},\cdot_{2}],*) G([\cdot_{1},\cdot_{2}],*)  +\frac{1}{2}\mathcal{H}(\pi_{\mathfrak{G}}*, \pi_{\mathfrak{G}}* ) -D_{\nabla f}G|^{2} \frac{e^{-f}dV_{g}}{(4\pi t)^{\frac{n}{2}}}
\end{align*}
\begin{align*}
 & + t\int_{M} | -(d^{D})^{*}F +G^{-1} D_{\cdot}G(\diamond, F(\cdot, *) ) +G^{-1}\tr_{G}D_{*}G([\cdot,\diamond],\cdot) +\frac{1}{2} G^{-1}  \mathcal{H} (\pi_{\mathfrak{G}} \diamond, \pi_{TM}*)  \\ &  \hspace*{10mm} - F(\nabla f, *)|^{2} \frac{e^{-f}dV_{g}}{(4\pi t)^{\frac{n}{2}}}
\end{align*}
\begin{align*}
& + \frac{t}{2}\int_{M} | -\frac{g}{t} -2Ric_{g} +\frac{1}{2} D_{*}G(\cdot_{1},\cdot_{2})D_{*}G(\cdot_{1},\cdot_{2}) +G(F(*,\cdot), F(*,\cdot))  \\ &  \hspace*{10mm}  +\frac{1}{2}\mathcal{H}(\pi_{TM}*, \pi_{TM}* ) -2\N^2 f |^{2} \frac{e^{-f}dV_{g}}{(4\pi t)^{\frac{n}{2}}}
\\& + \frac{t}{2}\int_{M} | D_{\cdot}H(\cdot,*,*)-D_{\cdot_{1}}G(\cdot_{2},\pi_{\mathfrak{G}}*)\wedge H(\cdot_{1},\cdot_{2},*)-\frac{1}{2}G(F(\cdot_{1},\cdot_{2}),\pi_{\mathfrak{G}}*)\wedge H(\cdot_{1},\cdot_{2},*)\\ &  \hspace*{10mm} +\frac{1}{2}\tr_{G}G([\cdot_{1},\cdot_{2}],\pi_{\mathfrak{G}}*)\wedge H(\cdot_{1},\cdot_{2},*)- i_{\nabla f}H|^{2}  \frac{e^{-f}dV_{g}}{(4\pi t)^{\frac{n}{2}}} 
\\ & + \int_{M} (\frac{1}{4}|F|^{2} -\frac{1}{4}|[,]|^{2}+ \frac{1}{6}|H|^{2}-\frac{1}{4}\tr_{G}\mathcal{H}(\cdot,\cdot)) \frac{e^{-f}dV_{g}}{(4\pi t)^{\frac{n}{2}}}.
\end{align*}

\end{prop}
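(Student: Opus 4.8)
The plan is to derive Proposition \ref{p:Wmonotonicity} from the general variational identity of Proposition \ref{p:Wvariation} by specializing the one‑parameter family to a gauge‑fixed solution of generalized Ricci flow coupled to the conjugate heat equation, and then completing squares. First I would pass to the canonical‑plus‑gradient gauge. The functional $\WW_{+}(\bar g,\bar H,f,t)$ is invariant under simultaneous pullback of $\bar g$, $\bar H$ and $f$ by a bundle automorphism of $P$, since each ingredient of its density --- $\brs{\N f}^{2}$, $R_{g}$, $\brs{DG}^{2}$, $\brs{F}^{2}$, $\brs{\bar H}_{\bar g}^{2}$, $\brs{[,]}^{2}$ and $dV_{g}$ --- is natural. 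Hence, starting from an ungauged solution $(\bar g(t),\bar H(t))$ of generalized Ricci flow together with a solution $u_{t}=e^{-f_{t}}/(4\pi t)^{n/2}$ of (\ref{f:conjugateheat}), I would pull everything back by the flow of the invariant vector field $X_{f_{t}}=\gd^{-1}\pi^{*}q_{f_{t}}$ of \S\ref{ss:CGF}, with $q_{f_{t}}=-\tfrac12 g^{-1}DG(\cdot,\cdot)-\N f_{t}$; this leaves $\tfrac{d}{dt}\WW_{+}$ unchanged and puts the metric evolution into the gauge‑modified form of Proposition \ref{p:decgengfGRF}. The crucial point is that under this change of gauge the conjugate heat equation (\ref{f:conjugateheat}), including its transport term $\tfrac12\IP{q,\N\log u}$, becomes the statement that the base measure $(4\pi t)^{-n/2}e^{-f}dV_{g}$ is \emph{pointwise} constant along the gauge‑fixed flow --- the analogue of Perelman's observation that in the gradient gauge the conjugate heat equation freezes the reference measure. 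This is checked by rewriting (\ref{f:conjugateheat}) as an equation for $f$ and matching it against $\tfrac12\tr_{g}\dot g$ read off from Proposition \ref{p:decgengfGRF}(3).

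With this reduction I would apply Proposition \ref{p:Wvariation} with $\tau(t)=t$ and complete the square term by term. Comparing the evolution equations of Proposition \ref{p:decgengfGRF} with the four pairings in Proposition \ref{p:Wvariation}: $\partial_{t}G$ equals twice the symmetric $\mathfrak{G}$‑tensor paired against $\tfrac{dG}{dt}$; $\dot g-\tfrac1t g$ equals twice the $TM$‑tensor in the third pairing once the shift $-\tfrac1{2\tau}g$ is absorbed (note $\tau\dot g-\dot\tau\,g=t(\dot g-\tfrac1t g)$); $\tfrac{dA}{dt}$ equals the $\mathfrak{G}$‑valued one‑form in the second pairing; and $\dot B=-d^{*}H+i_{q-\N f}H$ equals the two‑form in the fourth pairing, using Proposition \ref{p:dstarHdecomp} together with the identity $H(q,*,*)=-\tfrac12 D_{\cdot_{1}}G(\cdot_{2},\cdot_{2})H(\cdot_{1},*,*)$ as in the proof of Proposition \ref{p:decgfGRF2}(4). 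Consequently each pairing collapses to a nonnegative multiple of a square: the displayed formula writes the $G$‑, $g$‑ and $H$‑contributions as $\tfrac t2$ times the square of the corresponding full flow right‑hand side (which one checks equals $\IP{t\,\partial_t(\cdot),S}$) and the $A$‑contribution as $t$ times its square. The $\tfrac{d\tau}{dt}$‑term of Proposition \ref{p:Wvariation}, with $\tfrac{d\tau}{dt}=1$, reproduces verbatim the final line $\int_{M}\big(\tfrac14\brs{F}^{2}-\tfrac14\brs{[,]}^{2}+\tfrac16\brs{H}^{2}-\tfrac14\tr_{G}\mathcal{H}\big)(4\pi t)^{-n/2}e^{-f}dV_{g}$, and the one remaining term of Proposition \ref{p:Wvariation} --- the one multiplied by $\tfrac{d}{dt}\big((4\pi t)^{-n/2}e^{-f}dV_{g}\big)$ --- vanishes identically by the measure invariance from the first step.

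The main obstacle is precisely that first reduction: verifying that (\ref{f:conjugateheat}), which lives on the ungauged flow and carries the extra first‑order term $\tfrac12\IP{q,\N\log u}$, is the equation that pins the measure $(4\pi t)^{-n/2}e^{-f}dV_{g}$ in the canonical‑plus‑gradient gauge. This is the ``subtle change'' flagged in the introduction: because the fibers need not be compact one must integrate over the compact base $M$ rather than over $P$, and tracking how $q$ and $\N f$ enter the Lie‑derivative terms and $\partial_{t}dV_{g}$ under the two successive gauge transformations is where the genuine content lies. Once that is in place, the completion of squares and the matching of constants are routine manipulations of the formulas in Propositions \ref{p:decgengfGRF} and \ref{p:Wvariation}.
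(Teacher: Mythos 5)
Your proposal follows exactly the route the paper intends: apply the general variation formula of Proposition \ref{p:Wvariation} with $\tau=t$, substitute the canonical-plus-gradient gauge-fixed evolution equations of Proposition \ref{p:decgengfGRF} (so each pairing collapses to a perfect square with the stated factors $\tfrac t2$ and $t$, the $\tfrac{d\tau}{dt}$ term gives the final line, and $\dot B=-d^*H+i_{q-\nabla f}H$ comes from Proposition \ref{p:dstarHdecomp}), and use that the conjugate heat equation is precisely the condition fixing the base measure $(4\pi t)^{-n/2}e^{-f}dV_g$ in that gauge, killing the remaining term. The only step left to write out is the measure-preservation check you flag, i.e.\ matching (\ref{f:conjugateheat}) against $\tfrac12\tr_g\dot g$ from Proposition \ref{p:decgengfGRF}(3) via $\mathrm{div}\, q=-\tfrac12\left(\tr_g(D_\cdot DG)_\cdot(\cdot,\cdot)-\brs{DG}^2\right)$, which is exactly how the paper itself frames the definition of the conjugate heat equation.
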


\section{Rigidity and classification Results} \label{s:rigidity}
\subsection{Convergence}

\begin{defn} \label{d:convergence} Let $\{(P^i, p_i, \bar{g}_i(\cdot), \bar{H}_i(\cdot))\}$ be a sequence of pointed locally $\GG$-invariant solutions to generalized Ricci flow defined on a time interval $(\ga, \gw)$ with $\ga, \gw \in [-\infty, \infty]$.  We say that
\begin{align*}
\lim_{i \to \infty} (P_i, p_i, \bar{g}_i(\cdot), \bar{H}_i(\cdot)) = (P_{\infty}, p_{\infty}, \bar{g}_{\infty}(\cdot), \bar{H}_{\infty}(\cdot))
 \end{align*}
 if there exists a sequence of open neighborhoods of $p_{\infty} \in B_{\infty}$, denoted $\{U_j\}$, and subintervals of $(\ga, \gw)$, denoted $\{I_j\}$, such that
 \begin{enumerate}
 \item For every compact set $K \subset B_{\infty}$ there exists $j_0$ such that $K \subset U_j$ for all $j \geq j_0$.
 \item For all $i, j$ there exists an open neighborhood of $p_i \in B_i$, denoted $V_{i,j}$, together with isomorphisms
 \begin{align*}
 \phi_{i,j} : \pi_{\infty}^{-1}(U_j) \to \pi_i^{-1}(V_{i,j})
 \end{align*}
 such that for all $j$,
 \begin{align*}
 \lim_{i \to \infty} \phi_{i,j}^* \bar{g}_i(\cdot) = \bar{g}_{\infty}(\cdot), \qquad  \lim_{i \to \infty} \phi_{i,j}^* \bar{H}_i(\cdot) = \bar{H}_{\infty}(\cdot)
 \end{align*}
 smoothly on $\pi_{\infty}^{-1}(U_j) \times I_j$
   \end{enumerate}
\end{defn}

\subsection{Energy functional case}

\begin{prop} \label{p:energyrigidity} Suppose $(\bar{g}_t, \bar{H}_t)$ is an invariant solution of generalized Ricci flow, and suppose $u_t = e^{-f_t}$ satisfies (\ref{f:conjugateheat}).  Furthermore suppose $[H] = 0$ and $\GG$ is abelian.
Then $\FF(\bar{g}_t, \bar{H}_t,f_t,t)$ is constant in $t$ if and only if $H \equiv 0$, $F \equiv 0$, $\det G_{ij}$ is constant and $\bar{g}$ is a local product metric with $g$ Ricci flat.
\begin{proof} From Proposition \ref{p:Fmonotonicity}, we know that $\frac{d \FF}{dt}$ is a sum of nonnegative terms, and so is zero if and only if all the terms on the right hand side of (\ref{f:Fmoneqn}) vanish.  From the vanishing of the final term of (\ref{f:Fmoneqn}) we obtain for instance at time zero
\begin{align*}
0 = - d^* H + i_{q - \N f} H.
\end{align*}
Expressing $q = \N \ln \sqrt{\det G_{ij}}$ this can be rewritten as
\begin{align*}
0 =&\ - d^* (e^{\ln \sqrt{\det G_{ij}} - f} H)
\end{align*}
Since $[H] = 0$, we have $H = d b$, thus taking the inner product of the above equation with $b$, integrating over $M$ then integrating by parts yields $H \equiv 0$.  Thus the generalized Ricci flow is in fact a standard Ricci flow with $\GG$ abelian and constant $\FF$ functional, thus the remaining claims follow by (\cite{Lott2} Proposition 4.21, Proposition 4.38).
\end{proof}
\end{prop}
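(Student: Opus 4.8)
The plan is to read everything off the monotonicity formula of Proposition~\ref{p:Fmonotonicity}. The right-hand side of~(\ref{f:Fmoneqn}) is a sum of four terms, each of the form $\int_M|\,\cdot\,|_{g_{\mathcal{E}}}^2\,e^{-f}\,dV_g\ge 0$, so $\FF(\bar{g}_t,\bar{H}_t,f_t,t)$ is constant in $t$ if and only if, for every $t$, each of the four integrand tensors vanishes identically. The substantive direction is ``only if'': from constancy one must recover the claimed rigidity, and the crucial point is to force $H\equiv 0$.

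For this, I would first exploit the vanishing of the fourth integrand. Since $\GG$ is abelian, the term $\tfrac12\tr_G G([\cdot_1,\cdot_2],\pi_{\mathfrak{G}}*)\wedge H$ in that integrand drops out, and by Proposition~\ref{p:dstarHdecomp} together with the relation $H(q,*,*)=-\tfrac12 D_{\cdot_1}G(\cdot_2,\cdot_2)H(\cdot_1,*,*)$ what remains assembles to $-d^*H+i_qH$; together with the $-i_{\N f}H$ term this gives, at $t=0$, $0=-d^*H+i_{q-\N f}H$. Again because $\GG$ is abelian, $q=\N\ln\sqrt{\det G_{ij}}$ is a gradient, and the pointwise identity $d^*(h\omega)=h\,d^*\omega-i_{\N h}\omega$ rewrites the equation as
\[
0=d^*\!\bigl(e^{\ln\sqrt{\det G_{ij}}-f}\,H\bigr).
\]
Now the hypothesis $[H]=0$ gives $H=db$ for some $b\in\Gamma(\wedge^2\mathcal{E}^*)$; pairing with $b$, integrating over $M$ against the measure for which $d^*$ is the formal adjoint of $d$ (this measure carries the fiber-volume factor $\sqrt{\det G_{ij}}$, which is tied to the modification of the conjugate heat equation), and integrating by parts yields $\int_M|H|_{g_{\mathcal{E}}}^2\,\mu=0$ for a strictly positive measure $\mu$, whence $H\equiv 0$.

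Once $H\equiv 0$ at $t=0$, by uniqueness the solution is an ordinary Ricci flow with $\bar{H}\equiv 0$, so $\FF$ reduces to the energy functional studied by Lott \cite{Lott2} on an abelian principal bundle. The equivalence of constancy of this functional with ``$F\equiv 0$, $\det G_{ij}$ constant, and $\bar{g}$ a local product metric with $g$ Ricci flat'' is then precisely \cite{Lott2} (Propositions~4.21 and~4.38); this also settles the ``if'' direction, since those conditions include $H\equiv 0$ (so $[H]=0$ holds automatically) and Lott's equivalence runs in reverse. I expect the main obstacle to be the passage to $H\equiv 0$ above: one must check that the weight $e^{\ln\sqrt{\det G_{ij}}-f}$ is exactly the one making $d^*\!\bigl(e^{\ln\sqrt{\det G_{ij}}-f}\,\cdot\bigr)$ the adjoint of the algebroid differential $d$ over the compact base $M$, so that the integration by parts is legitimate --- this is the step where the possible non-compactness of the fibers forces the modified conjugate heat equation, and is the reason this rigidity is not a formal consequence of the known monotonicity on the total space.
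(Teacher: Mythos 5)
Your argument is essentially the paper's own proof: read off the vanishing of each nonnegative term in Proposition \ref{p:Fmonotonicity}, rewrite the torsion equation as $0=d^*\bigl(e^{\ln\sqrt{\det G_{ij}}-f}H\bigr)$ using $q=\N\ln\sqrt{\det G_{ij}}$, use $[H]=0$ to write $H=db$ and integrate by parts over the compact base to force $H\equiv 0$, then invoke Lott (\cite{Lott2}, Propositions 4.21 and 4.38) for the remaining rigidity and the converse. Your added remark about checking that the weight makes the integration by parts legitimate over $M$ (rather than the total space) is exactly the point the paper treats implicitly, so the proposal is correct and follows the same route.
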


\begin{rmk} \label{r:steadyrmk} The hypothesis that $[H] = 0$ is necessary to obtain the vanishing of $H$ and hence that the resulting structure is actually a solution of the reduced Ricci soliton equations.  In particular, it was shown in \cite{Streetssolitons} that nontrivial steady solitons for the generalized Ricci flow (in fact pluriclosed flow) exist on Hopf surfaces $S^3 \times S^1$.  In the case of elliptic Hopf surfaces these structures are invariant under a principal $T^2$ action with base space a bad orbifold.  For these examples $[H] \neq 0$, and the solitons are moreover nontrivial in the sense that $\N f \neq 0$.
\end{rmk}

\begin{cor} \label{c:LTBviaenergy} Suppose $(P, \bar{g}_t, \bar{H}_t)$ is a $\GG$-invariant generalized Ricci flow on $[0,\infty)$.  Given $\{s_i\}_{i=1}^{\infty}$ a sequence such that $\lim_{i \to \infty} s_i = \infty$, let
\begin{align*}
\bar{g}_i(t) := \bar{g}(s_i + t)\\
\bar{H}_i(t) := \bar{H}(s_i + t).
\end{align*}
Suppose $\lim_{i \to \infty} (\bar{g_i}(\cdot), \bar{H}_i(\cdot)) = (\bar{g}_{\infty}(\cdot), \bar{H}_{\infty}(\cdot))$, defined on a $\GG$-principal bundle $P_{\infty}$ over a compact base $M_{\infty}$.  Furthermore suppose that $[\bar{H}_{\infty}] = 0$ and $\GG$ is abelian.  Then $\bar{H}_{\infty} = 0, F_{\infty} = 0$, and $\bar{g}_{\infty}$ is a local product metric with $g_{\infty}$ Ricci flat.
\begin{proof} The first step is to construct a positive solution of the conjugate heat equation on $P_{\infty}$.  
Fix a sequence of times $\{t_j\} \to \infty$, and let $v_j(\cdot)$ be the unique solution to (\ref{f:conjugateheat}) satisfying $v_j(t_j) = \Vol(g(t_j))^{-1}$.  Note that since $v_j(t_j) > 0$ it follows by the maximum principle that $v_j > 0$ where defined.  On any compact time interval $[0, T]$, equation (\ref{f:conjugateheat}) is a uniformly parabolic scalar PDE with bounds on the derivatives of all coefficients, and so from parabolic regularity theory, for any solution $v_j(\cdot)$ defined on $[0,T]$, there exist uniform $C^{\infty}$ estimates on $[0,T-1]$.  Thus for any $T$ we can obtain a subsequence of $\{v_j(\cdot)\}$ converging uniformly on $[0,T-1]$, and by taking a diagonal subsequence as $T \to \infty$, we obtain a subsequence of $\{v_j(\cdot)\}$ which converges smoothly on compact subsets of $[0,\infty)$ to a solution $v_{\infty}(\cdot)$ defined on $[0,\infty)$, which is nontrivial since all $\{v_j(\cdot)\}$ have unit mass with respect to $g(\cdot)$.  We furthermore claim that $v_{\infty}(\cdot) > 0$.  Fix a time $t_0 \in (0,\infty)$ and fix $p \in B_{\infty}$.  Using the positive heat kernel for the time-dependent Laplacian $\gD_{g_{\infty}(t)}$, we can construct a solution to the heat equation
\begin{align*}
\left(\dt - \gD_{g_{\infty}(t)} \right)f =&\ 0 \qquad \mbox{on}\qquad B_{\infty} \times (t_0, t_0+1]\\
\lim_{t \to t_0} f =&\ \gd_{p}.
\end{align*}
By the maximum principle, since $B_{\infty}$ is compact $f(x, t_0 + 1)$ has a strict positive lower bound.  We compute
\begin{align*}
\frac{d}{dt} \int_{B_{\infty}} f v dV_g =&\ \int_{B_{\infty}} \left\{ \left( \gD f\right) v + f \left( - \gD v + \left(R - \tfrac{1}{4} \brs{DG}^2 - \tfrac{1}{2} \brs{F}^2 - \tfrac{1}{4} \tr_g \HH \right) v \right) \right.\\
&\ \left. \qquad + f v \left(-R + \tfrac{1}{4} \brs{DG}^2 + \tfrac{1}{2} \brs{F}^2 + \tfrac{1}{4} \tr_g \HH \right) \right\} dV_{g}\\
=&\ 0.
\end{align*}
Thus $\int_{B_{\infty}} fv dV_g$ is constant in time, and thus
\begin{align*}
v(p, t_0) = \lim_{t \to t_0} \int_{B_{\infty}} f v dV_g = \int_{B_{\infty} \times \{t_0 + 1\}} f v dV_g \geq \inf_{B_{\infty}} f(\cdot, t_0 + 1) > 0,
\end{align*}
as claimed.

As $v_{\infty}$ is strictly positive, we define $\phi_{\infty}(\cdot)$ via $v_{\infty}(\cdot) = e^{- \phi_{\infty}(\cdot)}$.  Furthermore we set $u_i(t) = v_{\infty}(s_i + t) = e^{-f_i(t)}$.  Using the uniform geometric estimates on $\bar{g}_i(\cdot)$ it follows as above that $u_i$ converges smoothly on compact subsets of $[0, \infty)$ to a solution $u_{\infty}(\cdot)$ of the conjugate heat equation, and we furthermore set $u_{\infty}(\cdot) = e^{-f_{\infty}(\cdot)}$.  By the convergence properties already established, it follows that, for any $t \in [0, \infty)$, 
\begin{align*}
\FF(\bar{g}_{\infty}(t), \bar{H}_{\infty}(t), f_{\infty}(t)) =&\ \lim_{i \to \infty} \left( \bar{g}_i(t), \bar{H}_i(t), f_i(t) \right)\\
=&\ \lim_{i \to \infty} \left( \bar{g}(s_i + t), \bar{H}_i(s_i + t), \phi_{\infty}(s_i + t) \right).
\end{align*}
The final expression is finite and independent of $t$, and so the claims follow from Proposition \ref{p:energyrigidity}.
\end{proof}
\end{cor}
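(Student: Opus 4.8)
The plan is to deduce the statement from Proposition \ref{p:energyrigidity}: it suffices to exhibit, along the limit flow $(\bar{g}_{\infty}(\cdot), \bar{H}_{\infty}(\cdot))$, a \emph{positive} solution $u_{\infty}(t) = e^{-f_{\infty}(t)}$ of the conjugate heat equation (\ref{f:conjugateheat}) for which $\FF(\bar{g}_{\infty}(t), \bar{H}_{\infty}(t), f_{\infty}(t))$ is independent of $t$. Then all the hypotheses of Proposition \ref{p:energyrigidity} hold ($[\bar{H}_{\infty}] = 0$ and $\GG$ abelian by assumption, $M_{\infty}$ compact by hypothesis), and its conclusion applies verbatim. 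The underlying mechanism is Proposition \ref{p:Fmonotonicity}: since its right-hand side is a sum of nonnegative integrals, $t \mapsto \FF$ is nondecreasing along any generalized Ricci flow coupled to a conjugate heat solution; and if such a monotone quantity converges to a finite limit, then its shifts $t \mapsto \FF(\bar{g}(s+t), \dots)$ converge, as $s \to \infty$, to that constant limit. So the real content is (i) to build a suitable conjugate heat solution in the limit and (ii) to certify finiteness of the relevant $\FF$-values.

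For (i) I would first work on the original flow $\bar{g}(s)$, $s \in [0, \infty)$. Fix $t_j \to \infty$ and solve (\ref{f:conjugateheat}) backward in time from $v_j(t_j) = \Vol(g(t_j))^{-1}$; the maximum principle gives $v_j > 0$ wherever defined, and since (\ref{f:conjugateheat}) is designed so that $\int_M v\, dV_g$ is preserved, each $v_j$ carries unit mass at every time. On each finite interval (\ref{f:conjugateheat}) is a uniformly parabolic scalar equation whose coefficients and their derivatives are controlled by the geometry of the flow, which is uniformly bounded by the smooth convergence hypothesis; parabolic regularity then gives uniform local $C^{\infty}$ bounds on $\{v_j\}$, and a diagonal subsequence converges smoothly on compact subsets of $[0, \infty)$ to a nonnegative solution $v_{\infty}$ of (\ref{f:conjugateheat}). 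It is nontrivial since unit mass passes to the limit on the compact base.

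The main obstacle is upgrading $v_{\infty} \geq 0$ to $v_{\infty} > 0$ everywhere, since a limit of positive functions may degenerate. I would do this by a duality argument against the forward heat flow. Fix $t_0 \in (0,\infty)$ and a point $p$, and use the positive heat kernel of the time-dependent Laplacian $\gD_{g(t)}$ to build a positive solution $f$ of $\left(\dt - \gD_{g(t)}\right) f = 0$ on $(t_0, t_0 + 1]$ with $\lim_{t \to t_0} f = \gd_p$. The zeroth- and first-order terms of (\ref{f:conjugateheat}) are precisely those making $\frac{d}{dt}\int_M f\, v_{\infty}\, dV_g = 0$, the Laplacian terms integrating by parts against one another and the potential term cancelling the evolution of $dV_g$. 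Therefore $v_{\infty}(p,t_0) = \int_M f\, v_{\infty}\, dV_g$ at time $t_0$, which by this conservation equals the same integral at time $t_0 + 1$, hence is at least $\inf_M f(\cdot, t_0 + 1) > 0$, the strict positivity coming from compactness of $M$ and positivity of the heat kernel. Thus $v_{\infty} > 0$ and we may write $v_{\infty} = e^{-\phi_{\infty}}$.

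To finish, transfer the picture to the limit. Put $u_i(t) = v_{\infty}(s_i + t) = e^{-f_i(t)}$, a conjugate heat solution along $\bar{g}_i(\cdot)$; the same parabolic estimates give smooth subsequential convergence $u_i \to u_{\infty} = e^{-f_{\infty}}$, a positive conjugate heat solution along $(\bar{g}_{\infty}(\cdot), \bar{H}_{\infty}(\cdot))$. By smooth convergence, for each fixed $t$,
\[
\FF(\bar{g}_{\infty}(t), \bar{H}_{\infty}(t), f_{\infty}(t)) = \lim_{i \to \infty} \FF\bigl(\bar{g}(s_i + t), \bar{H}(s_i + t), \phi_{\infty}(s_i + t)\bigr).
\]
The right-hand side is the limit, as $s \to \infty$, of $s \mapsto \FF(\bar{g}(s), \bar{H}(s), \phi_{\infty}(s))$, which is nondecreasing by Proposition \ref{p:Fmonotonicity}; since the left-hand side is finite (compact base, smooth data), this monotone limit is finite and independent of $t$. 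Hence $\FF$ is constant in $t$ along the limit flow, and Proposition \ref{p:energyrigidity} yields $\bar{H}_{\infty} \equiv 0$, $F_{\infty} \equiv 0$, $\det G_{ij}$ constant, and $\bar{g}_{\infty}$ a local product metric with $g_{\infty}$ Ricci flat, as claimed.
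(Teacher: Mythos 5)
Your proposal is correct and follows essentially the same route as the paper's own proof: backward conjugate heat solutions $v_j$ with unit mass, positivity of the limit $v_\infty$ via duality against a forward heat kernel started at a Dirac mass, time-shifting $v_\infty$ to produce a conjugate heat solution along the limit flow, and then using finiteness plus monotonicity of $\FF$ to conclude constancy and invoke Proposition \ref{p:energyrigidity}. The only difference is that you make explicit the monotonicity argument behind the paper's remark that the limiting value is ``finite and independent of $t$,'' which is a fair elaboration rather than a new idea.
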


\subsection{Expander entropy case}

To treat the expander entropy case we again must deal with the final terms in the evolution equation for $\WW_+$ which have potentially mixed sign.  We begin with a preliminary lemma.

\begin{lemma} \label{l:partialpositivity} Given $(\bar{g}, \bar{H})$, one has
\begin{align*}
\frac{1}{6}|H|_{g_{\mathcal E}}^{2}-&\frac{1}{4}\tr_{G}\mathcal{H}(\cdot,\cdot)\\
=&\ -\frac{1}{12}\tr_{G}H(\cdot_{1},\cdot_{2},\cdot_{3})H(\cdot_{1},\cdot_{2},\cdot_{3}) + \frac{1}{4}\tr_{G}^{1} \tr_{g}^{(2,3)}H(\cdot_{1},\cdot_{2},\cdot_{3})H(\cdot_{1},\cdot_{2},\cdot_{3})\\
&\ +\frac{1}{6}\tr_{g}H(\cdot_{1},\cdot_{2},\cdot_{3})H(\cdot_{1},\cdot_{2},\cdot_{3}).
\end{align*}
\begin{proof} Direct computation yields
\begin{align*}
\brs{H}^2_{\bar{g}} =&\ \tr_g H(\cdot_{1},\cdot_{2},\cdot_{3})H(\cdot_{1},\cdot_{2},\cdot_{3}) + 3\tr_{G}^{1} \tr_{g}^{(2,3)}H(\cdot_{1},\cdot_{2},\cdot_{3})H(\cdot_{1},\cdot_{2},\cdot_{3})\\
&\ + 3 \tr_{g}^{1} \tr_{G}^{(2,3)}H(\cdot_{1},\cdot_{2},\cdot_{3})H(\cdot_{1},\cdot_{2},\cdot_{3}) + \tr_G H(\cdot_{1},\cdot_{2},\cdot_{3})H(\cdot_{1},\cdot_{2},\cdot_{3}),
\end{align*}
while
\begin{align*}
\tr_G \bar{\HH}(\cdot, \cdot) =&\ \tr_{G}^{1} \tr_{g}^{(2,3)}H(\cdot_{1},\cdot_{2},\cdot_{3})H(\cdot_{1},\cdot_{2},\cdot_{3}) + 2 \tr_{g}^{1} \tr_{G}^{(2,3)}H(\cdot_{1},\cdot_{2},\cdot_{3})H(\cdot_{1},\cdot_{2},\cdot_{3})\\
&\ + \tr_G H(\cdot_{1},\cdot_{2},\cdot_{3})H(\cdot_{1},\cdot_{2},\cdot_{3}).
\end{align*}
Combining these yields the equality.
\end{proof}
\end{lemma}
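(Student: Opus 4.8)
The plan is to prove this by a purely pointwise computation, expanding both sides in the bidegree decomposition of $\bar{H}$ induced by the splitting $\mathcal{E} = \mathfrak{G} \oplus TM$. Fix a point of $M$ and choose local frames $\{\eta_i\}$ orthonormal for $G$ and $\{v_a\}$ orthonormal for $g$; by the correspondence of \S\ref{SECTDC} it is equivalent to work with $H \in \wedge^3 \mathcal{E}^*$ and $g_{_{\mathcal E}} = G \oplus g$. The three-form $H$ then has four components indexed by how many of its arguments lie in $\mathfrak{G}$, namely $H(\eta,\eta,\eta)$, $H(\eta,\eta,v)$, $H(\eta,v,v)$ and $H(v,v,v)$, and these are mutually orthogonal with respect to $g_{_{\mathcal E}}$.

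First I would expand $\brs{H}_{g_{\mathcal E}}^2$: summing $H(e_m,e_n,e_p)^2$ over a $g_{_{\mathcal E}}$-orthonormal frame $\{e_m\} = \{\eta_i\} \cup \{v_a\}$ and grouping by bidegree, the all-$\mathfrak{G}$ and all-$TM$ components each contribute once, while the two mixed components each acquire a combinatorial multiplicity $3 = \binom{3}{1}$ counting the slot carrying the single odd leg (the count is clean precisely because $H$ is totally antisymmetric). This yields the first displayed identity, with coefficients $1,3,3,1$ on the four invariants. Next I would expand $\tr_G \mathcal{H}$: since $\mathcal{H}(e_m,e_n) = \sum_{p,q} H(e_m,e_p,e_q) H(e_n,e_p,e_q)$, the trace $\tr_G \mathcal{H} = \sum_i \mathcal{H}(\eta_i,\eta_i)$ forces one leg of each factor into $\mathfrak{G}$, while the internal pair $(e_p,e_q)$ splits into the cases $(\mathfrak{G},\mathfrak{G})$, $(\mathfrak{G},TM)$ and $(TM,TM)$, the middle case occurring with multiplicity $2$. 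This gives the three-term expression for $\tr_G \mathcal{H}$, with no contribution from the all-$TM$ invariant $\tr_g H(\cdot_1,\cdot_2,\cdot_3)H(\cdot_1,\cdot_2,\cdot_3)$.

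Finally I would form the combination $\tfrac16 \brs{H}_{g_{\mathcal E}}^2 - \tfrac14 \tr_G \mathcal{H}$ and collect coefficients of the four scalar invariants built from $H \otimes H$ by contracting $0$, $1$, $2$ or $3$ leg-pairs with $G^{-1}$ and the rest with $g^{-1}$. The coefficients work out to $\tfrac16$, $\tfrac36 - \tfrac14 = \tfrac14$, $\tfrac36 - \tfrac24 = 0$ and $\tfrac16 - \tfrac14 = -\tfrac1{12}$; the vanishing of the third coefficient is the crucial point, since that invariant, namely $\tr_g^1 \tr_G^{(2,3)} H(\cdot_1,\cdot_2,\cdot_3)H(\cdot_1,\cdot_2,\cdot_3)$, is the one of indefinite sign when integrated against the measure $e^{-f}dV_g$, and the three surviving terms reproduce exactly the right-hand side of the lemma.

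The only step requiring care is bookkeeping: getting the multiplicities $1,3,3,1$ in $\brs{H}_{g_{\mathcal E}}^2$ and $1,2,1$ in $\tr_G \mathcal{H}$ correct, which is where the antisymmetry of $H$ and the number of ways to assign legs to $\mathfrak{G}$ enter. There is no analytic content — it is a pointwise multilinear-algebra identity — so once the frame sums are organized the result is immediate, and this lemma then feeds into the analysis of the potentially indefinite boundary terms in the evolution of $\WW_+$.
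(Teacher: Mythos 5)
Your proposal is correct and follows essentially the same route as the paper: expand $\brs{H}^2_{g_{\mathcal E}}$ and $\tr_G \mathcal{H}$ in the bidegree components determined by $\mathcal{E} = \mathfrak{G} \oplus TM$ (with multiplicities $1,3,3,1$ and $1,2,1$ respectively) and combine, the key point being the cancellation of the mixed invariant $\tr_g^1 \tr_G^{(2,3)} H(\cdot_1,\cdot_2,\cdot_3)H(\cdot_1,\cdot_2,\cdot_3)$ in the combination $\tfrac16 \brs{H}^2 - \tfrac14 \tr_G \mathcal{H}$. The bookkeeping of coefficients you give matches the paper's computation exactly.
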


\begin{prop} \label{p:entropyrigidity} Suppose $(\bar{g}_t, \bar{H}_t)$ is a solution of generalized Ricci flow, and suppose $u_t = \frac{e^{-f_t}}{(4 \pi t)^{\frac{n}{2}}}$ satisfies (\ref{f:conjugateheat}).  Furthermore suppose $\GG$ is abelian and for all $t$
\begin{align} \label{f:nofibertorsion}
\pi_{\Lambda^3 \mathfrak g^*} H = 0.
\end{align}
Then 
\begin{align*}
\frac{d}{dt} \WW_+(\bar{g}_t, \bar{H}_t, f_t, t) \geq 0,
\end{align*}
and $\WW_+(\bar{g}_t, \bar{H}_t,f_t,t)$ is constant in $t$ if and only if $F \equiv 0$, $H \equiv 0$, $\det G_{ij}$ is constant and
\begin{gather} \label{f:Ricciexpandersoliton}
\begin{split}
(D_{\cdot}DG)_{\cdot} -DG_{\cdot_{2}}(*,\cdot_{1}) DG_{\cdot_{2}}(*,\cdot_{1}) =&\ 0\\
\Rc_g - \tfrac{1}{4} D G_{*}(\cdot_1, \cdot_2) D G_{*}(\cdot_1, \cdot_2) + \tfrac{1}{2 t} g =&\ 0.
\end{split}
\end{gather}
\begin{proof} Referring to Proposition \ref{p:Wmonotonicity}), since $\GG$ is assumed abelian the term $-\tfrac{1}{4} \brs{[,]}^2_{G}$ vanishes.  Furthermore, applying Lemma \ref{l:partialpositivity}, and the assumption (\ref{f:nofibertorsion}), it follows that
\begin{align*}
\tfrac{1}{6} \brs{H}^2_{\bar{g}} - \tfrac{1}{4} \tr_G H =&\ \tfrac{1}{4}\tr_{G}^{1} \tr_{g}^{(2,3)}H(\cdot_{1},\cdot_{2},\cdot_{3})H(\cdot_{1},\cdot_{2},\cdot_{3}) +\tfrac{1}{6}\tr_{g}H(\cdot_{1},\cdot_{2},\cdot_{3})H(\cdot_{1},\cdot_{2},\cdot_{3})\\
\geq&\ \tfrac{1}{12} \brs{ \pi_{\Lambda^1(\mathfrak g^*) \wedge \Lambda^2 T^*M} H}_{g_{_{\mathcal E}}}^2 + \tfrac{1}{6} \brs{ \pi_{\Lambda^3 T^*M} H}^2_g.
\end{align*}
In particular, we have shown that the time derivative of $\WW_+$ is a sum of nonnegative terms, which then all must vanish in the case $\WW_+$ is constant in $t$.  In particular, it follows immediately that $F \equiv 0$, and also that $H \in \gG \left( \Lambda^2(\mathfrak g^*) \wedge \Lambda^1 T^*M \right)$, as now all other components must vanish.  Using this and taking the trace of the equation $\frac{\del G}{\del t} = \IP{\N f, D G}$ yields
\begin{align*}
-d^{*}(e^{-f}DG(\cdot,\cdot)) +\tfrac{1}{2} \tr_G \bar{\mathcal H} e^{-f} = 0
\end{align*}
Integrating this equation over $M$, the divergence term will vanish, yielding that $\tr_G \bar{\mathcal H} = 0$, and hence $H$ vanishes completely.  With the vanishing of $H$, we in fact have a solution to Ricci flow, and so the remaining claims can be obtained from (\cite{Lott2} Proposition 4.67).

\end{proof}
\end{prop}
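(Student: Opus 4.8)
\subsection*{Proof proposal}

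The plan is to deduce everything from the evolution formula of Proposition~\ref{p:Wmonotonicity}. That formula writes $\frac{d}{dt}\WW_+$ as the sum of four manifestly nonnegative complete‑square integrals plus the remainder integral $\int_M\big(\tfrac14|F|^2-\tfrac14|[,]|^2+\tfrac16|H|^2-\tfrac14\tr_G\mathcal H\big)\tfrac{e^{-f}}{(4\pi t)^{n/2}}dV_g$, whose integrand a priori has no definite sign. Under the stated hypotheses this integrand is controlled: since $\GG$ is abelian, $\tfrac14|[,]|^2$ drops out, and Lemma~\ref{l:partialpositivity} together with the assumption $\pi_{\Lambda^3\mathfrak g^*}H\equiv 0$ rewrites $\tfrac16|H|^2-\tfrac14\tr_G\mathcal H$ as the nonnegative quantity $\tfrac14\tr_G^{1}\tr_g^{(2,3)}H(\cdot_1,\cdot_2,\cdot_3)H(\cdot_1,\cdot_2,\cdot_3)+\tfrac16\tr_g H(\cdot_1,\cdot_2,\cdot_3)H(\cdot_1,\cdot_2,\cdot_3)$, which bounds below a positive multiple of $|\pi_{\Lambda^1(\mathfrak g^*)\wedge\Lambda^2T^*M}H|^2_{g_{\mathcal E}}+|\pi_{\Lambda^3T^*M}H|^2_g$. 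Hence the remainder integrand is $\geq\tfrac14|F|^2+(\text{nonnegative }H\text{-terms})\geq 0$, which gives $\tfrac{d}{dt}\WW_+\geq 0$.

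For the equality case, suppose $\WW_+(\bar g_t,\bar H_t,f_t,t)$ is constant in $t$. Then every nonnegative contribution above vanishes identically for all $t>0$. From the remainder we read off at once $F\equiv 0$ together with the vanishing of the $\Lambda^1(\mathfrak g^*)\wedge\Lambda^2T^*M$ and $\Lambda^3T^*M$ components of $H$; combined with the standing assumption $\pi_{\Lambda^3\mathfrak g^*}H=0$ this forces $H\in\Gamma(\Lambda^2\mathfrak g^*\wedge\Lambda^1T^*M)$. To upgrade this to $H\equiv 0$ I would trace the fiber evolution equation for $G$ (Proposition~\ref{p:decgengfGRF}(1), now with $F\equiv 0$ and $[,]\equiv 0$) over $G$; rewriting the drift using $q=\N\log\sqrt{\det G_{ij}}$ exactly as in the proof of Proposition~\ref{p:energyrigidity}, and using that the fiberwise $G$-trace of $\mathcal H$ equals that of $\bar{\mathcal H}$ at corresponding points, this produces a divergence identity of the form
\[ -d^{*}\big(e^{-f}\,DG(\cdot,\cdot)\big)+\tfrac12\,\tr_G\bar{\mathcal H}\,e^{-f}=0 . \]
Integrating over the compact base $M$ annihilates the divergence term and forces $\tr_G\bar{\mathcal H}\equiv 0$; but $\tr_G\bar{\mathcal H}=\sum_i|i_{\eta_i}\bar H|^2$ for any $G$-orthonormal fiber frame $\{\eta_i\}$ is a sum of squares, so $\bar H$ has no vertical component, and intersecting this with $H\in\Gamma(\Lambda^2\mathfrak g^*\wedge\Lambda^1T^*M)$ gives $H\equiv 0$.

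Once $H\equiv 0$, $F\equiv 0$ and $\GG$ abelian, the reduced system is an honest invariant Ricci flow on $P$ and $\WW_+$ specializes to Lott's expander entropy (which reduces to that of Feldman--Ilmanen--Ni when $\GG$ is trivial), so constancy of $\WW_+$ forces, by (\cite{Lott2} Proposition~4.67), that $\det G_{ij}$ is constant and that the twisted Einstein equations (\ref{f:Ricciexpandersoliton}) hold; alternatively, the first equation in (\ref{f:Ricciexpandersoliton}) can be extracted directly from the vanishing of the first complete-square term once $F$ and $H$ are known to vanish. The main obstacle throughout is exactly this mixed-sign remainder term, and the two hypotheses are precisely what is needed to tame it: dropping ``$\GG$ abelian'' reinstates the wrong-sign bracket term $-\tfrac14|[,]|^2$, while dropping ``$\pi_{\Lambda^3\mathfrak g^*}H=0$'' leaves the negative term $-\tfrac1{12}\tr_G H(\cdot_1,\cdot_2,\cdot_3)H(\cdot_1,\cdot_2,\cdot_3)$ of Lemma~\ref{l:partialpositivity} unabsorbed, and monotonicity genuinely fails (cf.\ the Inoue surface example). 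A secondary subtlety is that, unlike the energy functional case where one assumes $[H]=0$ and integrates $H=db$ against a primitive, here one cannot see $H\equiv 0$ directly and must instead bootstrap via the $G$-trace identity above on the compact base.
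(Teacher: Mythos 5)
Your proposal is correct and follows essentially the same route as the paper's proof: nonnegativity of the remainder term via the abelian hypothesis and Lemma \ref{l:partialpositivity} with (\ref{f:nofibertorsion}), then in the equality case reading off $F \equiv 0$ and $H \in \Gamma(\Lambda^2\mathfrak g^* \wedge \Lambda^1 T^*M)$, upgrading to $H \equiv 0$ by tracing the $G$-evolution to get the divergence identity $-d^*(e^{-f}DG(\cdot,\cdot)) + \tfrac12 \tr_G\bar{\mathcal H}\,e^{-f}=0$ and integrating over the compact base, and finally invoking Lott's Proposition 4.67 for the Ricci flow case. Your explicit remark that $\tr_G\bar{\mathcal H}$ is a sum of squares of vertical contractions is a welcome clarification of a step the paper leaves implicit.
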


\begin{rmk} \label{e:Inoue} The hypothesis that the torsion $H$ has no vertical piece is necessary to obtain vanishing of $H$ and a reduction to the reduced expanding Ricci soliton equations as described in Proposition \ref{p:entropyrigidity}.  Boling showed (\cite{Boling} Proposition 4.7) that homogeneous solutions to pluriclosed flow on Inoue surfaces of type $S_A$ exist globally.  Furthermore, there is a a universal blowdown limit on the universal cover which is an expanding soliton.  These surfaces are $T^3$ bundles over $S^1$, and the torsion of homogeneous metrics has nonvanishing projection onto $\Lambda^3 \mathfrak g$.
\end{rmk}

\begin{cor} \label{c:LTBviaexpanders} Suppose $(P, \bar{g}(t), \bar{H}(t))$ is a $\GG$-invariant generalized Ricci flow on $[0,\infty)$.  Furthermore suppose $\GG$ is abelian and for all $t$,
\begin{align}
\pi_{\Lambda^3 \mathfrak g^*} H = 0.
\end{align}
Given $\{s_i\}_{i=1}^{\infty}$ a sequence such that $\lim_{i \to \infty} s_i = \infty$, let
\begin{align*}
\bar{g}_i(t) := s_i^{-1} \bar{g}(s_i t)\\
\bar{H}_i(t) := s_i^{-1} \bar{H}(s_i t).
\end{align*}
Suppose $\lim_{i \to \infty} (\bar{g_i}(\cdot), \bar{H}_t(\cdot)) = (\bar{g}_{\infty}(\cdot), \bar{H}_{\infty}(\cdot))$, defined on a $\GG$-principal bundle $P_{\infty}$ over $M_{\infty}$.  Then $F \equiv 0$, $H \equiv 0$, $\det G_{ij}$ is constant and equations (\ref{f:Ricciexpandersoliton}) hold for $\bar{g}_{\infty}$.
\end{cor}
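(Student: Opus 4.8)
The plan is to run the argument of Corollary \ref{c:LTBviaenergy} with the energy functional $\FF$ replaced by the expander entropy $\WW_+$ and the static pullback replaced by the parabolic rescaling, the one genuinely new ingredient being the scale-invariance of $\WW_+$. First I would record how the data transforms under parabolic rescaling. A direct check shows that if $(\bar g(t), \bar H(t))$ solves generalized Ricci flow then so does $(s^{-1}\bar g(st), s^{-1}\bar H(st))$ for every $s > 0$; the principal connection associated with $\bar g$, the Lie bracket on $\mathfrak G$, and the horizontal curvature $F$ are unchanged, while $G$, $DG$, $R_g$, $\brs{DG}^2$, $\brs{F}^2$, $\brs{\bar H}_{\bar g}^2$, $\tr_g \HH$ and $\brs{[,]}^2$ all rescale by the single factor $s$ (after the compensating change of time). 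Feeding this into Definition \ref{d:expent} gives the scale-invariance
\[
\WW_+\bigl(s^{-1}\bar g,\, s^{-1}\bar H,\, f,\, t\bigr) = \WW_+\bigl(\bar g,\, \bar H,\, f,\, st\bigr).
\]
Similarly, since the coefficient of $u$ in (\ref{f:conjugateheat}) and the Laplacian both rescale by $s$, if $v(t)$ solves (\ref{f:conjugateheat}) along $(\bar g(t),\bar H(t))$ then $s^{n/2} v(st)$ solves it along $(s^{-1}\bar g(st), s^{-1}\bar H(st))$, and this normalization preserves the total mass $\int_M v\, dV_g$.

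Next I would build a positive solution of the conjugate heat equation on $P_\infty$ exactly as in Corollary \ref{c:LTBviaenergy}: solve (\ref{f:conjugateheat}) backwards from a sequence of times $t_j \to \infty$ with unit mass, use the uniform geometric bounds on $\bar g_i(\cdot)$ and parabolic regularity together with a diagonal argument to extract a nontrivial limit $v_\infty(\cdot)$ along $(\bar g(t), \bar H(t))$ which, by the heat-kernel and maximum-principle argument there (which uses compactness of $M_\infty$), is strictly positive. Writing $v_\infty(s) = (4\pi s)^{-n/2} e^{-\psi(s)}$, set $u_i(t) := s_i^{n/2} v_\infty(s_i t) = (4\pi t)^{-n/2} e^{-\psi(s_i t)}$; by the previous paragraph this is a positive solution of (\ref{f:conjugateheat}) along $(\bar g_i(t), \bar H_i(t))$ with potential $f_i(t) = \psi(s_i t)$, and as in Corollary \ref{c:LTBviaenergy} the uniform estimates give $u_i \to u_\infty$ smoothly on compact subsets, with $u_\infty(t) = (4\pi t)^{-n/2} e^{-f_\infty(t)}$ a positive solution of (\ref{f:conjugateheat}) along $(\bar g_\infty(t), \bar H_\infty(t))$.

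Finally I would combine these observations. On one hand, smooth convergence gives, for each fixed $t$,
\[
\WW_+(\bar g_\infty(t), \bar H_\infty(t), f_\infty(t), t) = \lim_{i \to \infty} \WW_+(\bar g_i(t), \bar H_i(t), f_i(t), t),
\]
and the left side is a finite number, being the integral of a smooth density over the compact manifold $M_\infty$. On the other hand, the scale-invariance above identifies $\WW_+(\bar g_i(t), \bar H_i(t), f_i(t), t) = \WW_+(\bar g(s_i t), \bar H(s_i t), \psi(s_i t), s_i t)$, and since $v_\infty$ solves (\ref{f:conjugateheat}) along the original flow, which by hypothesis has $\GG$ abelian and $\pi_{\Lambda^3 \mathfrak g^*} \bar H \equiv 0$, Proposition \ref{p:entropyrigidity} shows that $s \mapsto \WW_+(\bar g(s), \bar H(s), \psi(s), s)$ is nondecreasing. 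Hence its limit $L$ as $s \to \infty$ exists in $(-\infty, +\infty]$ and equals $\WW_+(\bar g_\infty(t), \bar H_\infty(t), f_\infty(t), t)$ for every $t$; in particular $L$ is finite and $\WW_+(\bar g_\infty(\cdot), \bar H_\infty(\cdot), f_\infty(\cdot), \cdot)$ is constant in $t$. Since the limit flow still has $\GG$ abelian and $\pi_{\Lambda^3 \mathfrak g^*} \bar H_\infty \equiv 0$, the rigidity statement of Proposition \ref{p:entropyrigidity} applied to $(\bar g_\infty, \bar H_\infty)$ then yields $F_\infty \equiv 0$, $H_\infty \equiv 0$, $\det (G_\infty)_{ij}$ constant, and the twisted Einstein equations (\ref{f:Ricciexpandersoliton}) for $\bar g_\infty$, as claimed.

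The main point requiring care is the construction of the positive conjugate-heat solution on $P_\infty$ and the strict positivity of the limit $u_\infty$, which is exactly where compactness of $M_\infty$ enters, precisely as in Corollary \ref{c:LTBviaenergy}; the scale-invariance of $\WW_+$ is a routine but essential bookkeeping computation, and everything else is formal once the monotonicity of Proposition \ref{p:entropyrigidity} is in hand.
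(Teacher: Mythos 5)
Your proposal is correct and is essentially the argument the paper intends: the corollary is the exact analogue of Corollary \ref{c:LTBviaenergy}, run with $\WW_+$ in place of $\FF$, and your two added ingredients (the parabolic scale-invariance $\WW_+(s^{-1}\bar g, s^{-1}\bar H, f, t) = \WW_+(\bar g,\bar H,f,st)$ together with the $s^{n/2}$-normalized rescaled conjugate heat solution, and then monotonicity plus rigidity from Proposition \ref{p:entropyrigidity}) are precisely the details the paper leaves implicit. No gaps beyond the same level of informality already present in the printed proof of Corollary \ref{c:LTBviaenergy}.
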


\begin{cor} \label{pcfcor} Suppose $(P, \bar{g}(t), \bar{H}(t))$ is a $\GG$-invariant pluriclosed flow on $[0,\infty)$.  Furthermore suppose $\GG$ is abelian and for all $t$,
\begin{align}
\pi_{\Lambda^3 \mathfrak g^*} H = 0.
\end{align}
With the assumptions of Corollary \ref{c:LTBviaexpanders}, it furthermore follows that $(\bar{g}_{\infty}(t), \bar{J}_{\infty}(t))$ satsifies
\begin{itemize}
\item[1)] $J_{\infty}$ is fixed in time and $(\bar{g}_{\infty}(t), {J}_{\infty})$ is Kahler.
\item[2)] $M_{\infty}$ is even dimensional and $J_{\infty}=J_{1} \oplus J_{2}$ on $\mathfrak{G} \oplus TM_{\infty}$.
\end{itemize}
In particular, if $\dim \GG = 1$ then invariant pluriclosed flow has no subsequential limits in the sense of Definition \ref{d:convergence}.
\begin{proof} Using the rigidity of the limit implied by Corollary \ref{c:LTBviaexpanders}, we see that along the limiting solution of pluriclosed flow is in fact a solution to K\"ahler-Ricci flow, and the complex structure remains fixed in time.  Furthermore, the equations (\ref{f:Ricciexpandersoliton}) imply that the metric on the fiber is fixed while the metric on the base expands homothetically.  Since the metric is also Hermitian for all times, it follows immediately that $J_{\infty}$ must preserve decompose as claimed in Part 2).  The final sentence then follows immediately.
\end{proof}
\end{cor}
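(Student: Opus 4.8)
The plan is to extract everything from Corollary \ref{c:LTBviaexpanders}, which already supplies the structural rigidity of the blowdown limit: $F_{\infty} \equiv 0$, $\bar{H}_{\infty} \equiv 0$, $\det G_{ij}$ constant, and the reduced expanding soliton equations (\ref{f:Ricciexpandersoliton}) for $\bar{g}_{\infty}$. What then remains is to read off the consequences of these identities in the presence of the compatible complex structure $\bar{J}_{\infty}$, which persists along the limit because invariant pluriclosed flow keeps the metric Hermitian, so that $g_{_{\mathcal{E}}}(t)$ is $\bar{J}_{\infty}(t)$-Hermitian for every $t$.

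For Part 1), I would combine $\bar{H}_{\infty} \equiv 0$ with the identity $\bar{H}_{\infty} = - d^{c}\bar{\gw}_{\infty}$, where $\bar{\gw}_{\infty}(\cdot,\cdot) = \bar{g}_{\infty}(\bar{J}_{\infty}\cdot,\cdot)$: vanishing torsion forces the fundamental $(1,1)$-form $\bar{\gw}_{\infty}(t)$ to be closed, so $(\bar{g}_{\infty}(t), \bar{J}_{\infty}(t))$ is K\"ahler for every $t$. Since the torsion vanishes identically along the limit, the limiting generalized Ricci flow is a genuine Ricci flow, and in the K\"ahler setting this is the K\"ahler-Ricci flow, along which the complex structure is stationary; hence $\bar{J}_{\infty}(t) \equiv \bar{J}_{\infty}$.

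For Part 2), I would feed $F_{\infty} \equiv 0$, $\bar{H}_{\infty} \equiv 0$, and the abelianness of $\GG$ into the reduced evolution equations of Proposition \ref{p:decgfGRF2} in the gauge in which (\ref{f:Ricciexpandersoliton}) holds. The first line of (\ref{f:Ricciexpandersoliton}) is precisely the right-hand side of the evolution equation for $G$, so $G$ is fixed in time; substituting the second line of (\ref{f:Ricciexpandersoliton}) into the evolution equation for $g$ yields $\tfrac{\partial g}{\partial t} = \tfrac{1}{t}g$, so the base metric expands homothetically, $g(t) = \tfrac{t}{t_{0}}g(t_{0})$. I would then exploit that $g_{_{\mathcal{E}}}(t) = G \oplus g(t)$ is Hermitian with respect to the now fixed endomorphism $\bar{J}_{\infty}$ for every $t$: writing $\bar{J}_{\infty}$ in block form on $\mathcal{E} = \mathfrak{G} \oplus TM_{\infty}$ and comparing the $t$-independent and the $t$-linear parts of the identity $g_{_{\mathcal{E}}}(t)(\bar{J}_{\infty}\cdot,\bar{J}_{\infty}\cdot) = g_{_{\mathcal{E}}}(t)(\cdot,\cdot)$, positive-definiteness of $G$ and of $g(t_{0})$ forces the two off-diagonal blocks of $\bar{J}_{\infty}$ to vanish. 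Thus $\bar{J}_{\infty} = J_{1} \oplus J_{2}$ with $J_{1}^{2} = -\Id_{\mathfrak{G}}$ and $J_{2}^{2} = -\Id_{TM_{\infty}}$, and in particular $\mathfrak{G}$ and $TM_{\infty}$, hence $M_{\infty}$, are even-dimensional. The final assertion is then immediate, since a one-dimensional $\mathfrak{G}$ admits no $J_{1}$ with $J_{1}^{2} = -\Id$, so when $\dim \GG = 1$ no subsequential limit can exist.

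The only genuinely delicate point, and the one I would write out carefully, is the claim that $\bar{J}_{\infty}$ is stationary in time: in the generalized Ricci flow gauge the compatible complex structure is a priori time-dependent, and one must argue that the identical vanishing of $\bar{H}_{\infty}$ collapses its evolution to the trivial one --- equivalently, that the limit is honestly a K\"ahler-Ricci flow rather than merely a Ricci flow carrying a compatible but possibly varying complex structure. Everything else is bookkeeping: matching the reduced evolution equations of Proposition \ref{p:decgfGRF2} against (\ref{f:Ricciexpandersoliton}), and the short linear-algebra comparison of coefficients of powers of $t$.
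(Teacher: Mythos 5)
Your proposal is correct and follows essentially the same route as the paper's proof: use the rigidity from Corollary \ref{c:LTBviaexpanders} to get $H_{\infty}\equiv 0$ (hence K\"ahler, hence K\"ahler--Ricci flow with $J_{\infty}$ stationary), read off from (\ref{f:Ricciexpandersoliton}) and the reduced flow equations that $G$ is fixed while $g$ expands homothetically, and then use Hermitian compatibility of $G\oplus g(t)$ with the fixed $J_{\infty}$ to force the block decomposition $J_{1}\oplus J_{2}$ and the evenness of both factors, which rules out $\dim\GG=1$. The only difference is that you spell out the bookkeeping (matching (\ref{f:Ricciexpandersoliton}) against Proposition \ref{p:decgfGRF2} and comparing powers of $t$ in the Hermitian identity) that the paper leaves implicit.
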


\end{document}